        \titleformat{\section}{\normalfont\large\bf}{\thesection.}{1ex}{\centering}
        \titleformat{\subsection}[runin]{\normalfont\bf}{\thesubsection.}{1ex}{}[.]
    \theoremstyle{plain}
        \newtheorem{theorem}{Theorem}[section]
        \newtheorem{lemma}[theorem]{Lemma}
    \theoremstyle{definition}
    \theoremstyle{remark}
    \DeclareMathOperator{\sign}{sign}
    \numberwithin{equation}{section}
\begin{document}
\begin{center}\large
    {\bf Notions of solution and weak-strong uniqueness criteria \\ for the Navier-Stokes equations in Lorentz spaces} \\  \ \\
    Joseph P.\ Davies\footnote{University of Sussex, Brighton, UK, {\em jd535@sussex.ac.uk}} and Gabriel S. Koch\footnote{University of Sussex, Brighton, UK, {\em g.koch@sussex.ac.uk}}
\end{center}
\begin{abstract}
    For initial data $f\in L^{2}(\mathbb{R}^n)$ ($n\geq 2$), we prove that if $p\in(n,\infty]$, any solution \linebreak $u\in L_{t}^{\infty}L_{x}^{2}\cap L_{t}^{2}H_{x}^{1}\cap L_{t}^{\frac{2p}{p-n}}L_{x}^{p,\infty}$ to the Navier-Stokes equations satisfies the energy equality, and that such a solution $u$ is unique among all solutions $v\in L_{t}^{\infty}L_{x}^{2}\cap L_{t}^{2}H_{x}^{1}$ satisfying the energy inequality.  This extends well-known results due to G. Prodi (1959) and J. Serrin (1963), which treated the Lebesgue space $L_{x}^{p}$ rather than the larger Lorentz (and `weak Lebesgue') space $L_{x}^{p,\infty}$.  In doing so, we also prove the equivalence of various notions of solutions in $L_{x}^{p,\infty}$, generalizing in particular a result proved for the Lebesgue setting in Fabes-Jones-Riviere (1972).
\end{abstract}
\section{Introduction}
It has been known since the pioneering work of J. Leray \cite{leray1934} that certain weak solutions to the Navier-Stokes equations with initial data in the natural energy space $L^2(\mathbb{R}^n)$ always exist for all time.  These solutions (now known as `Leray-Hopf' solutions due to the later contribution of E. Hopf \cite{hopf1951}) moreover satisfy an energy inequality which implies that they belong to the space $L_{t}^{\infty}L_{x}^{2}\cap L_{t}^{2}H_{x}^{1}$.  When $n=2$, such solutions are known to be unique (see \cite{lady4}), while for $n\geq 3$, uniqueness of such solutions is not known without additional assumptions.  One has, for example, the well-known early results of G. Prodi \cite{prodi1959} and J. Serrin \cite{serrin1963} for any $n\geq 2$ that if, for some fixed initial data, there exists a Leray-Hopf solution  which belongs moreover to the space $L_{t}^{\frac{2p}{p-n}}L_{x}^{p}$ for some $p>n$, then it is the only Leray-Hopf solution for that data. (This is now known as well for the difficult endpoint $p=n$, see  \cite{ess} for $n=3$ and its generalization in \cite{dongdu} to $n>3$.)

In this paper, we extend the uniqueness results of \cite{prodi1959,serrin1963} to the so-called `weak Lebesgue' setting, namely the Lorentz spaces $L^{p,\infty}(\mathbb{R}^n)$.  In order to do so, we work with  various notions of `solution' (including in particular the Leray-Hopf type of weak solution) which we show in Theorem \ref{intro-equivalence} to be equivalent under our assumptions.  Such equivalences are well-known in the Lebesgue setting -- for example, see \linebreak \cite[Theorem 2.1]{fabes1972} in the work of Fabes-Jones-Riviere which relates the notions of `weak' and `mild' solutions in such settings,  which we  generalize in Theorem \ref{fjrgen} (along with Theorem \ref{projected-problem}) to similar weak Lebesgue settings.  The full set of equivalent notions which we address in the weak Lebesgue setting is described in Theorem \ref{intro-equivalence} below.

We point out here the recent work of T. Barker \cite{barker2018} which establishes a similar type of uniqueness result (see \cite[Proposition 1.6]{barker2018}) for $n=3$ if there exists a solution in the mixed space-time Lorentz space $L_{t}^{\frac{2p}{p-3},s}L_{x}^{p,s}$ for some $p>3$ and $s<\infty$, and for $s=\infty$ but only if its norm in that space is sufficiently small.  In contrast, by considering the Lebesgue setting in time only, we are able to remove the requirement of smallness under the assumption of existence in $L_{t}^{\frac{2p}{p-n}}L_{x}^{p,\infty}$ ($p>n$), for any $n\geq 2$.  Moreover, the result in \cite{barker2018} is established as a by-product of the more general result \cite[Theorem 1.3]{barker2018} in the setting of certain larger Besov spaces, which is proved using more sophisticated tools than those in \cite{prodi1959,serrin1963}. Our proof is more direct, and largely follows the method in \cite{serrin1963}.

After this work was completed, it was pointed out to us by P. G. Lemari\'e-Rieusset that, when $n=3$, the statement of our weak-strong uniqueness theorem (Theorem \ref{intro-weak-strong} below) was indicated by a comment in the work \cite{lemarie3}. Specifically, \cite[Proposition 5]{lemarie3} 
is a weak-strong uniqueness result which is proved in a more general type of space denoted by $\dot X_r$, and there is a remark just before that proposition that ``$L^{3/r,\infty} \subset \dot X_r$ for $r<3/2$'' (however the embedding is not proved there).  In our proof, we do not require the use of spaces of the form $\dot X_r$, and give a full proof directly in the space $L^{p,\infty}(\mathbb{R}^n)$ (and not only for $n=3$).  Our method, however, is essentially the same as that in \cite{lemarie3}.

\subsection{General discussion}
On the space-time domain $(0,T)\times\mathbb{R}^{n}$, we consider the \textbf{\textit{linearised Navier-Stokes equations}}
\begin{equation}\label{initial-value-problem}
    \left\{\begin{array}{l}u_{j}'-\Delta u_{j}+\nabla_{k}F_{jk}+\nabla_{j}P=0 \quad (1\leq j \leq n), \\ \nabla\cdot u=0, \\ u(0)=f, \end{array}\right.
\end{equation}
where the \textbf{\textit{Navier-Stokes equations}} are realised by taking $F_{jk}=u_{j}u_{k}$. At the formal level, we can eliminate the pressure term $P$ by applying the Leray projection
\begin{equation}
    \mathbb{P}_{ij} = \delta_{ij}-\frac{\nabla_{i}\nabla_{j}}{\Delta}
\end{equation}
onto solenoidal vector fields, which transforms \eqref{initial-value-problem} into the heat equation
\begin{equation}\label{intro-heat-equation}
    \left\{\begin{array}{l}u_{i}'-\Delta u_{i}+\nabla_{k}\mathbb{P}_{ij}F_{jk}=0 \quad (1\leq i \leq n), \\ \nabla\cdot u=0, \\ u(0)=f, \end{array}\right.
\end{equation}
for which we have the corresponding integral equation
\begin{equation}\label{intro-integral-equation}
    u_{i}(t,x) = \int_{\mathbb{R}^{n}}\Phi(t,x-y)f_{i}(y)\,\mathrm{d}y - \int_{0}^{t}\int_{\mathbb{R}^{n}}\nabla_{k}\Phi(t-s,x-y)[\mathbb{P}_{ij}F_{jk}(s)](y)\,\mathrm{d}y\,\mathrm{d}s
\end{equation}
for $1\leq i \leq n$, where the heat kernel $\Phi$ is defined by
\begin{equation}
    \Phi(t,x) := \frac{1}{{(4\pi t)}^{n/2}}e^{-{|x|}^{2}/4t} = \frac{1}{{(2\pi)}^{n}}\int_{\mathbb{R}^{n}}e^{\mathrm{i}x\cdot\xi-t{|\xi|}^{2}}\,\mathrm{d}\xi \quad \text{for }(t,x)\in(0,\infty)\times\mathbb{R}^{n}.
\end{equation}

We will work here in the context\footnote{In \cite{davies2021}, we develop local well-posedness (with $F=u\otimes u$) of \eqref{intro-integral-equation} in this context, but essentially with the projection $\mathbb{P}$ on the kernel $\Phi$ rather than on $F$; this small discrepancy can be resolved using estimates in \cite{davies2021}.} of \textbf{\textit{Lorentz spaces}} $L^{p,q}(\mathbb{R}^{n})$, where for a measurable function $f$ on $\mathbb{R}^{n}$ we define
\begin{equation}
    \lambda_{f}(y) := |\{x\in\mathbb{R}^{n}\text{ : }|f(x)|>y\}| \quad \text{for }y\in(0,\infty),
\end{equation}
\begin{equation}
    f^{*}(\tau) := \sup\{y\in(0,\infty)\text{ : }\lambda_{f}(y)>\tau\} \quad \text{for }\tau\in(0,\infty),
\end{equation}
\begin{equation}
    {\|f\|}_{L^{p,q}(\mathbb{R}^{n})}^{*} := \left\{\begin{array}{ll}{\left(\frac{q}{p}\int_{0}^{\infty}{\left[\tau^{1/p}f^{*}(\tau)\right]}^{q}\,\frac{\mathrm{d}\tau}{\tau}\right)}^{1/q} & p\in[1,\infty),\,q\in[1,\infty), \\ \sup_{\tau\in(0,\infty)}\tau^{1/p}f^{*}(\tau) & p\in[1,\infty],\,q=\infty, \end{array}\right.
\end{equation}
and the quasinorms ${\|\cdot\|}_{L^{p,q}(\mathbb{R}^{n})}^{*}$ satisfy
\begin{equation}
    {\|f\|}_{L^{p,p}(\mathbb{R}^{n})}^{*} = {\|f\|}_{L^{p}(\mathbb{R}^{n})} \quad \text{for }p\in[1,\infty],
\end{equation}
\begin{equation}
    {\|f\|}_{L^{p,q_{2}}(\mathbb{R}^{n})}^{*} \leq {\|f\|}_{L^{p,q_{1}}(\mathbb{R}^{n})}^{*} \quad \text{for }p\in[1,\infty) \text{ and }1\leq q_{1}\leq q_{2}\leq\infty.
\end{equation}
In order to understand how \eqref{intro-integral-equation} relates to \eqref{initial-value-problem} and \eqref{intro-heat-equation}, we require an understanding of the Leray projection $\mathbb{P}$. We will achieve this by defining $\mathbb{P}_{ij}:=\delta_{ij}+\mathcal{R}_{i}\mathcal{R}_{j}$, where the \textbf{\textit{Riesz transform}} $\mathcal{R}$ is the unique linear map
\begin{equation}
    \mathcal{R}:\cup_{p\in(1,\infty)}\left(L^{1}(\mathbb{R}^{n})+L^{p,\infty}(\mathbb{R}^{n})\right)\rightarrow\cup_{p\in(1,\infty)}\left(L^{1}(\mathbb{R}^{n})+L^{p,\infty}(\mathbb{R}^{n})\right)
\end{equation}
which satisfies
\begin{equation}
    \mathcal{R}f = \mathcal{F}^{-1}\left[\xi\mapsto\frac{-\mathrm{i}\xi}{|\xi|}\mathcal{F}f(\xi)\right] \quad \text{for all }f\in L^{2}(\mathbb{R}^{n}),
\end{equation}
\begin{equation}
    \left\{\begin{array}{ll}{\|\mathcal{R}f\|}_{L^{p,q}(\mathbb{R}^{n})}^{*} \lesssim_{n,p,q} {\|f\|}_{L^{p,q}(\mathbb{R}^{n})}^{*} \text{ and }\langle\mathcal{R}f,g\rangle=-\langle f,\mathcal{R}g\rangle \\ \text{for all }p\in(1,\infty),\text{ }q\in[1,\infty],\text{ }f\in L^{p,q}(\mathbb{R}^{n})\text{ }g\in L^{p',q'}(\mathbb{R}^{n}), \end{array}\right.
\end{equation}
where $\mathcal{F}$ is the Fourier transform, and we use the notation $\langle f,g\rangle:=\int_{\mathbb{R}^{n}}f(x)g(x)\,\mathrm{d}x$.

When attempting to relate the integral equation \eqref{intro-integral-equation} to the differential equations \eqref{initial-value-problem} and \eqref{intro-heat-equation}, the following subtlety arises regarding measurability: if $p\in(1,\infty)$, $(E,\mathcal{E})$ is a measurable space, and $u$ is a measurable function on $E\times\mathbb{R}^{n}$ satisfying $u(t)\in L^{p,\infty}(\mathbb{R}^{n})$ for all $t\in E$, then it is not immediately obvious that the Riesz transform defines a measurable function $\mathcal{R}u$ on $E\times\mathbb{R}^{n}$. We therefore define a weaker notion of measurability, by saying that a function $u:E\rightarrow L_{\mathrm{loc}}^{1}(\mathbb{R}^{n})$ is \textbf{\textit{weakly* measurable}} if the map $t\mapsto\langle u(t),\phi\rangle$ is measurable for each $\phi\in C_{c}^{\infty}(\mathbb{R}^{n})$. For $p,q\in[1,\infty)$ satisfying $(p=1\Rightarrow q=1)$, we can prove the following:
\begin{itemize}
    \item If $u:E\rightarrow L^{p',q'}(\mathbb{R}^{n})$ is weakly* measurable, then $t\mapsto\langle u(t),\phi\rangle$ is measurable for each $\phi\in L^{p,q}(\mathbb{R}^{n})$, and the map $t\mapsto{\|u(t)\|}_{{\left(L^{p,q}(\mathbb{R}^{n})\right)}^{*}}:=\sup_{{\|\phi\|}_{L^{p,q}(\mathbb{R}^{n})}^{*}\leq1}\langle u(t),\phi\rangle$ is measurable, where the norm ${\|\cdot\|}_{{\left(L^{p,q}(\mathbb{R}^{n})\right)}^{*}}$ is equivalent to the quasinorm ${\|\cdot\|}_{L^{p',q'}(\mathbb{R}^{n})}^{*}$ on $L^{p',q'}(\mathbb{R}^{n})$.
    \item If $u:E\rightarrow L^{p,q}(\mathbb{R}^{n})$ is weakly* measurable, then $t\mapsto\langle u(t),\phi\rangle$ is measurable for each $\phi\in L^{p',q'}(\mathbb{R}^{n})$, and the map $t\mapsto{\|u(t)\|}_{{\left(L^{p,q}(\mathbb{R}^{n})\right)}^{**}}:=\sup_{{\|\phi\|}_{{\left(L^{p,q}(\mathbb{R}^{n})\right)}^{*}}\leq1}\langle u(t),\phi\rangle$ is measurable, where the norm ${\|\cdot\|}_{{\left(L^{p,q}(\mathbb{R}^{n})\right)}^{**}}$ is equivalent to the quasinorm ${\|\cdot\|}_{L^{p,q}(\mathbb{R}^{n})}^{*}$ on $L^{p,q}(\mathbb{R}^{n})$.
    \item \textbf{\textit{Pettis' theorem:}} If $u:E\rightarrow L^{p,q}(\mathbb{R}^{n})$ is weakly* measurable, then there exist measurable functions $u_{m}:E\rightarrow L^{p,q}(\mathbb{R}^{n})$ with finite image, satisfying ${\|u_{m}(t)\|}_{{\left(L^{p,q}(\mathbb{R}^{n})\right)}^{**}}\leq{\|u(t)\|}_{{\left(L^{p,q}(\mathbb{R}^{n})\right)}^{**}}$ and ${\|u_{m}(t)-u(t)\|}_{{\left(L^{p,q}(\mathbb{R}^{n})\right)}^{**}}\overset{m\rightarrow\infty}{\rightarrow}0$ pointwise.
\end{itemize}
For $T\in(0,\infty]$, and $\alpha,p,q\in[1,\infty]$ satisfying $(p=1\Rightarrow q=1)$ and $(p=\infty\Rightarrow q=\infty)$, we write $\mathcal{L}_{p,q}^{\alpha;*}(T)$ to denote (equivalence classes of) weakly* measurable functions $u:(0,T)\rightarrow L^{p,q}(\mathbb{R}^{n})$ satisfying ``${\|u\|}_{L^{p,q}(\mathbb{R}^{n})}^{*}\in L^{\alpha}((0,T))$'', understood in the sense  that the quasinorm ${\|u(t)\|}_{L^{p,q}(\mathbb{R}^{n})}^{*}$ is equivalent to a norm which is a measurable function of $t$. We write $\mathcal{L}_{p,q}^{\alpha;*}(T_{-}):=\cap_{T'\in(0,T)}\mathcal{L}_{p,q}^{\alpha;*}(T)$, we remove the $q$ from the notation in the case $p=q$, and we remove the * from the notation if $u$ defines a measurable function on $(0,T)\times\mathbb{R}^{n}$.

We can now state the following theorem, which relates various formulations of the linearised Navier-Stokes equations.
\begin{theorem}\label{intro-equivalence}
    Assume that $T\in(0,\infty]$, $p_{0},p,\tilde{p}\in(1,\infty)$, $f\in L^{p_{0},\infty}(\mathbb{R}^{n})$ satisfies $\langle f_{i},\nabla_{i}\phi\rangle=0$ for all $\phi\in C_{c}^{\infty}(\mathbb{R}^{n})$, $F\in\mathcal{L}_{\tilde{p},\infty}^{1;*}(T_{-})$, and $u\in\mathcal{L}_{p,\infty}^{1;*}(T_{-})$. Then the following are equivalent.
    \begin{itemize}
        \item \textbf{\textit{Weak formulation:}} $\int_{0}^{T}\langle u_{j}(t),\nabla_{j}\psi(t)\rangle\,\mathrm{d}t$ for all $\psi\in C_{c}^{\infty}((0,T)\times\mathbb{R}^{n})$, and there exists some $P\in\mathcal{L}_{\tilde{p},\infty}^{1;*}(T)$ such that
        \begin{equation}
            \langle f_{j},\phi_{j}(0)\rangle + \int_{0}^{T}\left(\langle u_{j}(t),\phi_{j}'(t)+\Delta\phi_{j}(t)\rangle + \langle F_{jk}(t),\nabla_{k}\phi_{j}(t)\rangle + \langle P(t),\nabla_{j}\phi_{j}(t)\rangle\right)\,\mathrm{d}t = 0
        \end{equation}
        for all $\phi\in C_{c}^{\infty}([0,T)\times\mathbb{R}^{n})$. Moreover, $P(t)=\mathcal{R}_{i}\mathcal{R}_{j}F(t)$ for almost every $t\in(0,T)$.
        \item \textbf{\textit{Projected formulation:}} $\int_{0}^{T}\langle u_{j}(t),\nabla_{j}\psi(t)\rangle\,\mathrm{d}t$ for all $\psi\in C_{c}^{\infty}((0,T)\times\mathbb{R}^{n})$, and
        \begin{equation}\label{projform}
            \langle f_{i},\phi_{i}(0)\rangle + \int_{0}^{T}\left(\langle u_{i}(t),\phi_{i}'(t)+\Delta\phi_{i}(t)\rangle + \langle F_{jk}(t),\mathbb{P}_{ij}\nabla_{k}\phi_{i}(t)\rangle\right)\,\mathrm{d}t = 0
        \end{equation}
        for all $\phi\in C_{c}^{\infty}([0,T)\times\mathbb{R}^{n})$.
        \item \textbf{\textit{Mild formulation:}} $[u(t)](x)=v(t,x)$ for almost every $(t,x)\in(0,T)\times\mathbb{R}^{n}$, where $v$ is the (almost everywhere defined) measurable function on $(0,T)\times\mathbb{R}^{n}$ given by
        \begin{equation}
            v_{i}(t,x) := \int_{\mathbb{R}^{n}}\Phi(t,x-y)f_{i}(y)\,\mathrm{d}y - \int_{0}^{t}\int_{\mathbb{R}^{n}}\nabla_{k}\Phi(t-s,x-y)[\mathbb{P}_{ij}F_{jk}(s)](y)\,\mathrm{d}y\,\mathrm{d}s
        \end{equation}
        for almost every $(t,x)\in(0,T)\times\mathbb{R}^{n}$.
        \item \textbf{\textit{Very weak formulation:}} $\int_{0}^{T}\langle u_{j}(t),\nabla_{j}\psi(t)\rangle\,\mathrm{d}t$ for all $\psi\in C_{c}^{\infty}((0,T)\times\mathbb{R}^{n})$, and
        \begin{equation}
            \langle f_{i},\theta(0)\phi_{i}\rangle + \int_{0}^{T}\left(\langle u_{i}(t),\theta'(t)\phi_{i}+\theta(t)\Delta\phi_{i}\rangle + \langle F_{jk}(t),\theta(t)\nabla_{k}\phi_{j}\rangle\right)\,\mathrm{d}t = 0
        \end{equation}
        for all $\theta\in C_{c}^{\infty}([0,T))$ and $\phi\in C_{c,\sigma}^{\infty}(\mathbb{R}^{n})$, where the subscript $\sigma$ means that $\nabla\cdot\phi=0$.
    \end{itemize}
\end{theorem}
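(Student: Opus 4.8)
The plan is to designate the \emph{projected formulation} as the hub and to establish its equivalence with each of the other three, since it eliminates the pressure yet retains general (not necessarily solenoidal) test functions. Throughout, the common clause $\int_0^T\langle u_j(t),\nabla_j\psi(t)\rangle\,\mathrm{d}t=0$ (for all $\psi\in C_c^\infty((0,T)\times\mathbb{R}^n)$) encodes that $u$ is weakly divergence free; this, together with the hypothesis that $f$ is weakly divergence free, is what lets me freely trade $\phi$ for its Leray projection $\mathbb{P}\phi$ in the terms paired against $u$ and $f$. The three measurability facts (the two duality identifications and Pettis' theorem) will be invoked repeatedly to guarantee that every pairing below is a measurable function of $t$ and that Fubini applies.

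\textbf{Weak $\Leftrightarrow$ Projected.} This is purely algebraic. Given the projected formulation, I set $P:=\mathcal{R}_i\mathcal{R}_jF_{ij}$; boundedness of $\mathcal{R}$ on $L^{\tilde p,\infty}$ together with the first measurability fact shows $P\in\mathcal{L}^{1;*}_{\tilde p,\infty}(T)$ and is weakly* measurable. For a fixed test function the fields $\nabla_k\phi_i$ are smooth and compactly supported, so the skew-adjointness $\langle\mathcal{R}f,g\rangle=-\langle f,\mathcal{R}g\rangle$ lets me move both Riesz transforms off $F$ and onto $\phi$; a one-line Fourier-multiplier computation then gives $\langle P(t),\nabla_j\phi_j\rangle=\langle F_{jk}(t),\mathcal{R}_i\mathcal{R}_j\nabla_k\phi_i\rangle$, whence $\langle F_{jk},\nabla_k\phi_j\rangle+\langle P,\nabla_j\phi_j\rangle=\langle F_{jk},\mathbb{P}_{ij}\nabla_k\phi_i\rangle$, which is exactly the $F$-term of the projected formulation. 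Conversely, testing the weak formulation against gradient fields $\phi=\theta(t)\nabla\eta(x)$ kills the $u$- and $f$-terms (weak divergence freeness) and isolates $\langle P-\mathcal{R}_i\mathcal{R}_jF_{ij},\Delta\eta\rangle=0$ for all $\eta$, forcing $P=\mathcal{R}_i\mathcal{R}_jF_{ij}$ almost everywhere and hence reducing the weak formulation to the projected one.

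\textbf{Projected $\Leftrightarrow$ Very weak.} For solenoidal $\phi$ one has $\mathcal{R}_i\phi_i=0$ in Fourier, so $\mathbb{P}_{ij}\phi_i=\phi_j$ and (as $\mathbb{P}$ commutes with $\nabla_k$) $\mathbb{P}_{ij}\nabla_k\phi_i=\nabla_k\phi_j$; thus the projected formulation, specialised to separated solenoidal test functions $\theta(t)\phi(x)$, is precisely the very weak formulation. For the converse I take a general $\phi\in C_c^\infty([0,T)\times\mathbb{R}^n)$ and apply the very weak formulation to $\mathbb{P}\phi$: since $\phi-\mathbb{P}\phi$ is a gradient, the weak divergence-free conditions on $u$ and $f$ leave the $u$- and $f$-terms unchanged, while $\mathbb{P}_{ij}\nabla_k\phi_i=\nabla_k(\mathbb{P}\phi)_j$ reproduces the $F$-term; a density argument then upgrades separated time dependence to general $\phi$ via finite sums $\sum_l\theta_l(t)\psi_l(x)$, passing to the limit with the $L^\alpha$-in-time and Lorentz-in-space bounds. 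The one subtlety is that $\mathbb{P}\phi$ need not be compactly supported, so the very weak formulation must first be extended from $C_{c,\sigma}^\infty$ to solenoidal Schwartz-class fields by density, justified by Riesz boundedness and the integrability of $u$ and $F$.

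\textbf{Projected $\Leftrightarrow$ Mild, the main obstacle.} The forward direction is a computation: substituting the Duhamel formula for $u=v$, applying Fubini, and integrating the identity $\partial_t\Phi=\Delta\Phi$ by parts against $\phi$ in space-time recovers the $F$-term, while $\Phi(t,\cdot)\to\delta$ as $t\to0^+$ produces the initial term $\langle f_i,\phi_i(0)\rangle$, giving the projected formulation. The reverse direction is the crux, and is essentially the Fabes-Jones-Rivi\`ere uniqueness statement (cf.\ \cite[Theorem 2.1]{fabes1972}) adapted to the Lorentz setting. Because $F$ is prescribed independently of $u$, the problem is linear, so $w:=u-v$ satisfies the homogeneous projected formulation with zero data, i.e.\ $\int_0^T\langle w_i(t),\phi_i'(t)+\Delta\phi_i(t)\rangle\,\mathrm{d}t=0$ for all admissible $\phi$. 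Fixing $t_0\in(0,T)$ and a solenoidal $g$ and inserting the backward-heat test field $\phi(t,\cdot)=e^{(t_0-t)\Delta}g$ (which solves $\phi'+\Delta\phi=0$) makes the interior integral vanish and the boundary contribution telescope to $\langle w(t_0),g\rangle$, yielding $w(t_0)=0$ for almost every $t_0$. I expect the genuine difficulty to lie entirely here: $e^{(t_0-t)\Delta}g$ is neither compactly supported in space nor admissible at the endpoint $t_0$, so one must approximate it by legitimate test functions (smoothly truncating before $t_0$ and in space) and control every limit using the boundedness of the heat semigroup and of $\mathcal{R}$ on Lorentz spaces, together with Pettis' theorem and the duality facts to make sense of the pairings and to justify the exchange of limits and integrals in the weakly* measurable, low-regularity $L^{p,\infty}$ framework.
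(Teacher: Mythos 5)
Your architecture is exactly the paper's: the projected formulation is the hub there too (Theorems \ref{projected-problem} and \ref{fjrgen} plus the very-weak equivalence), and your treatment of Projected $\Leftrightarrow$ Mild is the same circle of ideas (backward heat potentials as test functions, spatial cutoffs, an $\epsilon$-shifted semigroup, limits $R\to\infty$ then $\epsilon\to 0$). However, two steps that you present as immediate are not, and one of them is false as stated. In Weak $\Rightarrow$ Projected you say that testing against gradient fields ``isolates $\langle P-\mathcal{R}_i\mathcal{R}_jF_{ij},\Delta\eta\rangle=0$ for all $\eta$, forcing $P=\mathcal{R}_i\mathcal{R}_jF_{ij}$ almost everywhere.'' That implication fails in general: $\langle Q,\Delta\eta\rangle=0$ for all $\eta\in C_c^{\infty}(\mathbb{R}^{n})$ only says $Q$ is a.e.\ equal to a harmonic function, and nonzero harmonic functions abound. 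What saves the conclusion is the membership $Q=P-\mathcal{R}_j\mathcal{R}_kF_{jk}\in L^{\tilde p,\infty}(\mathbb{R}^{n})$, and the paper devotes a sub-lemma to precisely this point: mollify to get smoothness and the mean value property, then use Liouville's theorem — a nonconstant harmonic function is unbounded, so its averaged maximal function $Q^{**}$ is infinite on every scale, contradicting membership in any Lorentz space. You also need the quantifier swap from ``for each $\eta$, for a.e.\ $t$'' to ``for a.e.\ $t$, for all $\eta$,'' which the paper handles with a countable dense subset of $\Delta C_c^{\infty}(\mathbb{R}^{n})$ in $L^{\tilde p',1}$; this recurs throughout and your sketch never addresses it.

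In Very weak $\Rightarrow$ Projected, your plan to extend the very weak formulation ``to solenoidal Schwartz-class fields by density'' does not reach the field you need: $\mathbb{P}\phi$ is \emph{not} Schwartz, since the multiplier $\xi_i\xi_j/|\xi|^{2}$ is singular at the origin and $\mathbb{P}\phi$ decays only like ${|x|}^{-n}$. The correct class is solenoidal fields with finite Sobolev--Lorentz norms, and density of $C_{c,\sigma}^{\infty}$ in that class is not a soft fact: it is the paper's Lemma \ref{prevlemma}, which constructs a Bogovskii-type corrector $v_R$ solving $\nabla\cdot v_R=\phi\cdot\nabla\rho_R$ with ${\|v_R\|}_{W_{p,q}^{k}}\to 0$, a genuinely quantitative construction rather than a limiting argument. (The subsequent upgrade from separated test functions $\theta(t)\chi(x)$ to general time dependence is also nontrivial; the paper does it by twice integrating in time a simple-function approximation of $\psi''$.) A smaller point in your mild-direction argument: since the homogeneous projected formulation for $w=u-v$ admits \emph{arbitrary} $\phi\in C_c^{\infty}([0,T)\times\mathbb{R}^{n})$, you should test against general $g$, not only solenoidal $g$ — with solenoidal $g$ alone you conclude only $\mathbb{P}w(t_0)=0$ and would need an extra Helmholtz-type argument to finish. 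So: right skeleton, but the Liouville pressure-identification and the compactly supported solenoidal approximation lemma are missing ideas, not routine details.
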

Measurability issues in the above formulations can be addressed by approximating the test functions using Pettis' theorem. The weak/projected/mild formulations correspond to equations \eqref{initial-value-problem}, \eqref{intro-heat-equation} and \eqref{intro-integral-equation} respectively.

Of particular interest are solutions to the Navier-Stokes equations which belong to the energy class $L_{t}^{\infty}L_{x}^{2}\cap L_{t}^{2}H_{x}^{1}$. More precisely, for $n\geq 2$ and $T\in(0,\infty]$, we define $\mathcal{H}_{T}$ to be the space of (equivalence classes of) weakly* measurable functions $u:(0,T)\rightarrow H^{1}(\mathbb{R}^{n})\subseteq L_{\mathrm{loc}}^{1}(\mathbb{R}^{n})$ satisfying $u\in\mathcal{L}_{2}^{\infty;*}(T_{-})$ and $\nabla u\in\mathcal{L}_{2}^{2;*}(T_{-})$, where we observe that weak* measurability of $u$ implies weak* measurability of $\nabla u$. By virtue of the identification $\mathcal{L}_{2}^{2;*}(T')\cong{\left(L^{2}((0,T')\times\mathbb{R}^{n})\right)}^{*}\cong L^{2}((0,T')\times\mathbb{R}^{n})$, we may identify $u$ and $\nabla u$ with measurable functions $u\in\mathcal{L}_{2}^{\infty}(T_{-})$ and $\nabla u\in\mathcal{L}_{2}^{2}(T_{-})$. For each $p\in(n,\infty]$ we have the \textbf{\textit{Sobolev inequality}} ${\|u\|}_{L^{\frac{2p}{p-2},2}(\mathbb{R}^{n})}\lesssim_{n,p}{\|u\|}_{L^{2}(\mathbb{R}^{n})}^{1-\frac{n}{p}}{\|\nabla u\|}_{L^{2}(\mathbb{R}^{n})}^{\frac{n}{p}}$, from which we deduce that $\mathcal{H}_{T}\subseteq\cap_{p\in(n,\infty]}\mathcal{L}_{\frac{2p}{p-2},2}^{2p/n}(T_{-})$. In particular, if $u\in\mathcal{H}_{T}$ and $F_{jk}=u_{j}u_{k}$ then the equivalence result of Theorem \ref{intro-equivalence} holds. Independently of Theorem \ref{intro-equivalence}, we will prove the following.

\pagebreak
\begin{theorem}\label{intro-weak-strong}
    Let $T\in(0,\infty]$, and assume that $f\in L^{2}(\mathbb{R}^{n})$ satisfies $\langle f_{i},\nabla_{i}\phi\rangle=0$ for all $\phi\in C_{c}^{\infty}(\mathbb{R}^{n})$.
    \begin{enumerate}[label=(\roman*)]
        \item If $u\in\mathcal{H}_{T}$ satisfies the mild formulation of the Navier-Stokes equations with initial data $f$, then $u$ satisfies the continuity condition
        \begin{equation}\label{intro-continuity-condition}
            \left\{\begin{array}{l}\text{there exists a subset }\Omega\subseteq(0,T)\text{ of total measure such that} \\ \langle u(t),\phi\rangle\rightarrow\langle f,\phi\rangle\text{ for all }\phi\in L^{2}(\mathbb{R}^{n})\text{ as }t\rightarrow0\text{ along }\Omega. \end{array}\right.
        \end{equation}
        \item Assume that $p\in(n,\infty]$, and that $u\in\mathcal{H}_{T}\cap\mathcal{L}_{p,\infty}^{2p/(p-n)}(T_{-})$ and $v\in\mathcal{H}_{T}$ satisfy the projected formulation of the Navier-Stokes equations with intial data $f$, with $u$ and $v$ both satisfying the continuity condition \eqref{intro-continuity-condition}. Then for almost every $t\in(0,T)$ we have
        \begin{equation}
        \begin{aligned}
            \langle u_{i}(t),v_{i}(t)\rangle &= {\|f\|}_{L^{2}(\mathbb{R}^{n})}^{2} - 2\int_{0}^{t}\langle\nabla_{j}u_{i}(s),\nabla_{j}v_{i}(s)\rangle\,\mathrm{d}s \\
            &\qquad -\int_{0}^{t}\langle{[(u\cdot\nabla)u]}_{i}(s),v_{i}(s)\rangle\,\mathrm{d}s -\int_{0}^{t}\langle u_{i}(s),{[(v\cdot\nabla)v]}_{i}(s)\rangle\,\mathrm{d}s.
        \end{aligned}
        \end{equation}
        In particular, $u$ satisfies the \textbf{\textit{energy equality}}
        \begin{equation}
            {\|f\|}_{L^{2}(\mathbb{R}^{n})}^{2} = {\|u(t)\|}_{L^{2}(\mathbb{R}^{n})}^{2} + 2\int_{0}^{t}{\|\nabla u(s)\|}_{L^{2}(\mathbb{R}^{n})}^{2}\,\mathrm{d}s \quad \text{for a.e.\ }t\in(0,T).
        \end{equation}
        If we make the additional assumption that $v$ satisfies the \textbf{\textit{energy inequality}}
        \begin{equation}
            {\|f\|}_{L^{2}(\mathbb{R}^{n})}^{2}\geq{\|v(t)\|}_{L^{2}(\mathbb{R}^{n})}^{2} + 2\int_{0}^{t}{\|\nabla v(s)\|}_{L^{2}(\mathbb{R}^{n})}^{2}\,\mathrm{d}s \quad \text{for a.e.\ }t\in(0,T),
        \end{equation}
        then $u(t,x)=v(t,x)$ for almost every $(t,x)\in(0,T)\times\mathbb{R}^{n}$.
    \end{enumerate}
\end{theorem}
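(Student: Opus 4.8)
The plan is to deduce both conclusions of part (ii) from the single mixed-energy identity for $\langle u_i(t),v_i(t)\rangle$, and to obtain part (i) separately from the mild formulation. Granting the mixed identity, the energy equality for $u$ is just the special case $v=u$: this is legitimate since $u$ itself lies in $\mathcal{H}_T$ and satisfies the projected formulation and the continuity condition, and the two cubic terms collapse to $\langle[(u\cdot\nabla)u]_i,u_i\rangle$, which vanishes upon integrating by parts and using $\nabla\cdot u=0$, leaving ${\|u(t)\|}_{L^2}^2={\|f\|}_{L^2}^2-2\int_0^t{\|\nabla u\|}_{L^2}^2$. For uniqueness I would set $w:=u-v$ and combine the mixed identity with the energy \emph{equality} for $u$ and the energy \emph{inequality} for $v$. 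Expanding ${\|w(t)\|}_{L^2}^2={\|u(t)\|}_{L^2}^2+{\|v(t)\|}_{L^2}^2-2\langle u_i(t),v_i(t)\rangle$, the three copies of ${\|f\|}_{L^2}^2$ cancel (this is exactly where the common initial trace, i.e. the continuity condition, enters) and the viscous terms assemble into $-2\int_0^t{\|\nabla w\|}_{L^2}^2$; using $\nabla\cdot u=\nabla\cdot v=0$ to rewrite the cubic terms and $\langle(w\cdot\nabla)w,w\rangle=0$ to replace $v$ by $u$ in the convected slot, this yields
\[
  {\|w(t)\|}_{L^2}^2+2\int_0^t{\|\nabla w(s)\|}_{L^2}^2\,\mathrm{d}s\le 2\int_0^t\langle[(w\cdot\nabla)w]_i(s),u_i(s)\rangle\,\mathrm{d}s .
\]

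The crucial estimate is then a Lorentz-space refinement of Serrin's. By Cauchy--Schwarz and Hölder's inequality in Lorentz spaces, $|\langle[(w\cdot\nabla)w]_i,u_i\rangle|\le{\|\nabla w\|}_{L^2}\,{\|u\|}_{L^{p,\infty}}\,{\|w\|}_{L^{\frac{2p}{p-2},2}}$, and the Sobolev inequality quoted above gives ${\|w\|}_{L^{\frac{2p}{p-2},2}}\lesssim{\|w\|}_{L^2}^{1-\frac np}{\|\nabla w\|}_{L^2}^{\frac np}$. Since $p>n$ forces $1+\frac np<2$, Young's inequality absorbs the factor ${\|\nabla w\|}_{L^2}^{1+\frac np}$ into the viscous term at the cost of $C\,{\|u\|}_{L^{p,\infty}}^{\frac{2p}{p-n}}{\|w\|}_{L^2}^2$, leaving
\[
  {\|w(t)\|}_{L^2}^2+\int_0^t{\|\nabla w(s)\|}_{L^2}^2\,\mathrm{d}s\le C\int_0^t g(s)\,{\|w(s)\|}_{L^2}^2\,\mathrm{d}s,\qquad g:={\|u\|}_{L^{p,\infty}}^{\frac{2p}{p-n}} .
\]
Here $g\in L^1((0,T'))$ precisely because $u\in\mathcal{L}_{p,\infty}^{2p/(p-n)}(T_-)$; this is the one place the weak-Lebesgue Prodi--Serrin hypothesis is used, and it is exactly where passing from $L^p$ to $L^{p,\infty}$ requires the finite second Lorentz index $2$ supplied by the Sobolev embedding so as to pair with $L^{p,\infty}$. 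Grönwall's inequality (with $L^1$ weight and vanishing initial datum) then forces $w\equiv0$ on each $(0,T')$, hence $u=v$ a.e. The endpoint $p=\infty$ is identical, with $\frac{2p}{p-n}=2$ and $\frac np=0$.

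The principal obstacle is establishing the mixed-energy identity itself, i.e. justifying that $u$ and $v$ may be used as mutual test functions so that the formal relation $\frac{\mathrm{d}}{\mathrm{d}t}\langle u_i,v_i\rangle=\langle u_i',v_i\rangle+\langle u_i,v_i'\rangle$ integrates to the stated formula. Neither solution is smooth nor differentiable in $t$, so I would regularize: mollify $u$ and $v$ in space and take Steklov averages in time, insert the regularized $v$ into the projected formulation \eqref{projform} for $u$ and the regularized $u$ into that for $v$, add the two, and pass to the limit, the $t=0$ boundary term producing $\langle f_i,f_i\rangle={\|f\|}_{L^2}^2$ via the continuity condition \eqref{intro-continuity-condition}. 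The delicate points are the commutator errors created when the mollification meets the quadratic nonlinearity, and the convergence and finiteness of the cubic pairings $\int_0^t\langle(u\cdot\nabla)u,v\rangle$ and $\int_0^t\langle u,(v\cdot\nabla)v\rangle$; both are controlled by the same Lorentz--Hölder and Sobolev estimates, which place the integrands in $L^1_t$ — one bounds, for instance, $|\langle u_i,[(v\cdot\nabla)v]_i\rangle|\lesssim{\|u\|}_{L^{p,\infty}}{\|v\|}_{L^2}^{1-\frac np}{\|\nabla v\|}_{L^2}^{1+\frac np}$ and integrates using ${\|u\|}_{L^{p,\infty}}\in L^{2p/(p-n)}_t$, ${\|\nabla v\|}_{L^2}\in L^2_t$ and ${\|v\|}_{L^2}\in L^\infty_t$, the three time exponents being Hölder-conjugate. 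The requisite regularity of $t\mapsto\langle u_i(t),v_i(t)\rangle$ (so the fundamental theorem of calculus applies on a full-measure set) comes from the mild formulation together with part (i).

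Finally, part (i) I would read off directly from the mild formulation. The linear part $e^{t\Delta}f$ converges strongly, hence weakly, to $f$ in $L^2$ as $t\to0$; for the Duhamel part it suffices to show weak convergence to $0$, which follows by testing against $\phi\in L^2$, transferring $\nabla$ and $\mathbb{P}$ onto $\phi$, and using that $F=u\otimes u$ is integrable in time near $0$ (with ${\|F(s)\|}_{L^1}\le{\|u(s)\|}_{L^2}^2\le{\|f\|}_{L^2}^2$) together with the smoothing of the heat semigroup and a density argument over $\phi\in C_c^\infty(\mathbb{R}^n)$. The exceptional null set is absorbed into the set $\Omega$ of total measure, since it is the mild representative that enjoys this weak continuity.
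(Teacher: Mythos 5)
Your proposal is correct and follows essentially the same route as the paper's: part (i) from the mild formulation (strong convergence of $e^{t\Delta}f$ plus vanishing of the Duhamel term, then density and the essential boundedness of ${\|u(t)\|}_{L^{2}(\mathbb{R}^{n})}$ near $t=0$), and part (ii) by deriving the mixed-energy identity through regularization and mutual testing, specializing to $v=u$ for the energy equality, and running the same Lorentz-H\"{o}lder/Sobolev/Young/Gr\"{o}nwall chain for uniqueness, with the same asymmetric trilinear estimates and H\"{o}lder-conjugate time exponents the paper uses. Two caveats, both harmless: in (i) the bound ${\|u(s)\|}_{L^{2}(\mathbb{R}^{n})}\leq{\|f\|}_{L^{2}(\mathbb{R}^{n})}$ is not available a priori (local essential boundedness, which follows from $u\in\mathcal{H}_{T}$, is what you need and is enough), and in the regularization step there are no genuine ``commutator'' errors to control --- since the projected formulation is linear in the solution with $F$ held fixed, what is actually needed is the $L^{1}$-in-time convergence of mollified pairings (the paper's Lemma \ref{approximation-lemma}) together with an extension of the admissible test class beyond $C_{c}^{\infty}$ (the paper extends \eqref{projform} to test functions in $H^{1}(\mathbb{R}^{n})$, resp.\ $H^{1}(\mathbb{R}^{n})\cap L^{p,\infty}(\mathbb{R}^{n})$, via cutoff and mollification), which your space-mollification/Steklov scheme would likewise require since the regularized solutions are not compactly supported.
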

Part (ii) of the previous theorem generalises the work of Prodi \cite{prodi1959} and Serrin \cite{serrin1963}, in which the assumption $u\in\mathcal{L}_{p,\infty}^{2p/(p-n)}(T_{-})$ is replaced by the stronger assumption $u\in\mathcal{L}_{p}^{2p/(p-n)}(T_{-})$. In the particular case $n=2$, we have $\mathcal{H}_{T}\subseteq\cap_{p\in(2,\infty)}\mathcal{L}_{p,\infty}^{2p/(p-n)}(T_{-})$ by the Sobolev inequality. For initial data $f\in J(\mathbb{R}^{n}):=\overline{C_{c}^{\infty}(\mathbb{R}^{n})}^{{\|\cdot\|}_{L^{2}(\mathbb{R}^{n})}}$, Hopf \cite{hopf1951} constructs a solution $u\in\mathcal{H}_{\infty}$ to the very weak formulation of the Navier-Stokes equations which satisfies the energy inequality.
\subsection{Lorentz spaces}
For a measurable function $f$ on $\mathbb{R}^{n}$ we define the auxilliary functions
\begin{equation}
    \lambda_{f}(y) := |\{x\in\mathbb{R}^{n}\text{ : }|f(x)|>y\}| \quad \text{for }y\in(0,\infty),
\end{equation}
\begin{equation}
    f^{*}(\tau) := \sup\{y\in(0,\infty)\text{ : }\lambda_{f}(y)>\tau\} \quad \text{for }\tau\in(0,\infty),
\end{equation}
\begin{equation}
    f^{**}(\tau) := \sup_{|A|\geq\tau}\frac{1}{|A|}\int_{A}|f(x)|\,\mathrm{d}x \overset{*}{=} \frac{1}{\tau}\int_{0}^{\tau}f^{*}(\eta)\,\mathrm{d}\eta \quad \text{for }\tau\in(0,\infty).
\end{equation}
The \textbf{\textit{Lorentz quasinorms}} ${\|f\|}_{L^{p,q}(\mathbb{R}^{n})}^{*}$ and the \textbf{\textit{Lorentz norms}} ${\|f\|}_{L^{p,q}(\mathbb{R}^{n})}$ are then defined by
\begin{equation}
    {\|f\|}_{L^{p,q}(\mathbb{R}^{n})}^{*} := \left\{\begin{array}{ll}{\left(\frac{q}{p}\int_{0}^{\infty}{\left[\tau^{1/p}f^{*}(\tau)\right]}^{q}\,\frac{\mathrm{d}\tau}{\tau}\right)}^{1/q} & p\in[1,\infty),\,q\in[1,\infty), \\ \sup_{\tau\in(0,\infty)}\tau^{1/p}f^{*}(\tau) & p\in[1,\infty],\,q=\infty, \end{array}\right.
\end{equation}
\begin{equation}
    {\|f\|}_{L^{p,q}(\mathbb{R}^{n})} := \left\{\begin{array}{ll}{\left(\frac{q}{p}\int_{0}^{\infty}{\left[\tau^{1/p}f^{**}(\tau)\right]}^{q}\,\frac{\mathrm{d}\tau}{\tau}\right)}^{1/q} & p\in(1,\infty),\,q\in(1,\infty), \\ \sup_{\tau\in(0,\infty)}\tau^{1/p}f^{**}(\tau) & p\in(1,\infty],\,q=\infty. \end{array}\right.
\end{equation}
We have the following properties, many of which generalise familiar properties of Lebesgue spaces.  As we discuss in \cite{davies2021}, the majority of these can be found in (or derived easily from statements in) \cite{hunt1966}, while part of property (viii) follows by arguing for example along the lines of (\cite[Theorem 1.26]{mouhot2017}).

\pagebreak
\begin{enumerate}[label=(\roman*)]
    \item ${\|f\|}_{L^{p,p}(\mathbb{R}^{n})}^{*} = {\|f\|}_{L^{p}(\mathbb{R}^{n})}$ for $p\in[1,\infty]$.
    \item ${\|f\|}_{L^{p,q_{2}}(\mathbb{R}^{n})}^{*} \leq {\|f\|}_{L^{p,q_{1}}(\mathbb{R}^{n})}^{*}$ for $p\in[1,\infty)$ and $1\leq q_{1}\leq q_{2}\leq\infty$.
    \item $f^{*}(\tau)\leq f^{**}(\tau)$, and ${\|f\|}_{L^{p,q}(\mathbb{R}^{n})}^{*}\leq {\|f\|}_{L^{p,q}(\mathbb{R}^{n})} \leq p'{\|f\|}_{L^{p,q}(\mathbb{R}^{n})}^{*}$ for $p\in(1,\infty]$ and $q\in[1,\infty]$ with $(p=\infty\Rightarrow q=\infty)$.
    \item ${\left\|\int_{\mathbb{R}}f(t,\cdot)\,\mathrm{d}t\right\|}_{L^{p,q}(\mathbb{R}^{n})} \leq \int_{\mathbb{R}}{\|f(t,\cdot)\|}_{L^{p,q}(\mathbb{R}^{n})}\,\mathrm{d}t$ for a measurable function $f$ on $\mathbb{R}\times\mathbb{R}^{n}$, with $p\in(1,\infty]$ and $q\in[1,\infty]$ satisfying $(p=\infty\Rightarrow q=\infty)$.
    \item $\int_{\mathbb{R}^{n}}|f(x)g(x)|\,\mathrm{d}x\leq\int_{0}^{\infty}f^{*}(\tau)g^{*}(\tau)\,\mathrm{d}\tau$, ${(fg)}^{*}(\tau)\leq f^{**}(\tau)g^{**}(\tau)$, and ${(fg)}^{**}(\tau)\leq{\|f\|}_{L^{\infty}(\mathbb{R}^{n})}g^{**}(\tau)$. (In applications, the first two of these can be combined with H\"{o}lder's inequality).
    \item ${(f*g)}^{**}(\tau)\leq{\|f\|}_{L^{1}(\mathbb{R}^{n})}g^{**}(\tau)$, and ${\|f*g\|}_{L^{r,s}(\mathbb{R}^{n})}\lesssim_{p,q,s}{\|f\|}_{L^{q,s}(\mathbb{R}^{n})}{\|g\|}_{L^{p,\infty}(\mathbb{R}^{n})}$ for $p,q,r\in(1,\infty)$, $s\in[1,\infty]$ and $1+\frac{1}{r}=\frac{1}{p}+\frac{1}{q}$.
    \item ${\|f\|}_{L^{p,\infty}(\mathbb{R}^{n})}\leq{\|f\|}_{L^{p_{0},\infty}(\mathbb{R}^{n})}^{1-\theta}{\|f\|}_{L^{p_{1},\infty}(\mathbb{R}^{n})}^{\theta}$ and ${\|f\|}_{L^{p,1}(\mathbb{R}^{n})}\lesssim_{p,p_{0},p_{1}}{\|f\|}_{L^{p_{0},\infty}(\mathbb{R}^{n})}^{1-\theta}{\|f\|}_{L^{p_{1},\infty}(\mathbb{R}^{n})}^{\theta}$ when $1<p_{0}<p_{1}\leq\infty$, $\theta\in(0,1)$ and $\frac{1}{p}=\frac{1-\theta}{p_{0}}+\frac{\theta}{p_{1}}$.
    \item If $p,q\in[1,\infty)$ satisfy $(p=1\Rightarrow q=1)$, then $L^{p,q}(\mathbb{R}^{n})$ is separable, contains $C_{c}^{\infty}(\mathbb{R}^{n})$ as a dense subset, and satisfies continuity of translation ${\|\phi(\cdot-h)-\phi(\cdot)\|}_{L^{p,q}(\mathbb{R}^{n})}^{*}\overset{h\rightarrow0}{\rightarrow}0$ and approximation of identity ${\left\|\int_{\mathbb{R}^{n}}\epsilon^{-n}f(y/\epsilon)(\phi(\cdot-y)-\phi(\cdot))\,\mathrm{d}y\right\|}_{L^{p,q}(\mathbb{R}^{n})}^{*}\overset{\epsilon\rightarrow0}{\rightarrow}0$ for $f\in L^{1}(\mathbb{R}^{n})$ and $\phi\in L^{p,q}(\mathbb{R}^{n})$. We note that continuity of translation and approximation of identity also hold if the space $L^{p,q}(\mathbb{R}^{n})$ is replaced by the space $C_{b,u}^{0}(\mathbb{R}^{n})$ of bounded, uniformly continuous functions, equipped with the supremum norm.
    \item If $p_{0}\neq p_{1}$, $r_{0}\neq r_{1}$, and $T:L^{p_{0},q_{0}}(\mathbb{R}^{n})+L^{p_{1},q_{1}}(\mathbb{R}^{n})\rightarrow L^{r_{0},s_{0}}(\mathbb{R}^{n})+L^{r_{1},s_{1}}(\mathbb{R}^{n})$ is a function satisfying
    \begin{equation}
        |T(f+g)| \leq K(|Tf|+|Tg|) \text{ a.e., } \quad {\|Tf\|}_{L^{r_{i},s_{i}}(\mathbb{R}^{n})}^{*} \leq B_{i}{\|f\|}_{L^{p_{i},q_{i}}(\mathbb{R}^{n})}^{*} \text{ for }i\in\{0,1\},
    \end{equation}
    then for $\theta\in(0,1)$, $\frac{1}{p_{\theta}}=\frac{1-\theta}{p_{0}}+\frac{\theta}{p_{1}}$ and $\frac{1}{r_{\theta}}=\frac{1-\theta}{r_{0}}+\frac{\theta}{r_{1}}$ we have $L^{p_{\theta},\infty}(\mathbb{R}^{n})\subseteq L^{p_{0},q_{0}}(\mathbb{R}^{n})+L^{p_{1},q_{1}}(\mathbb{R}^{n})$ and
    \begin{equation}
        {\|Tf\|}_{L^{r_{\theta},s}(\mathbb{R}^{n})}^{*} \leq B_{\theta}{\|f\|}_{L^{p_{\theta},q}(\mathbb{R}^{n})}^{*} \quad \text{for }q\leq s,
    \end{equation}
    where $B_{\theta}$ depends on $\theta$, the Lorentz indices and the constants $K,B_{0},B_{1}$.
\end{enumerate}
We also recall \textbf{\textit{Hardy's inequalities}} (\cite{hunt1966}, p.\ 256), which state that for $p\in(0,\infty)$ and $q\in[1,\infty)$ we have
\begin{equation}
\begin{aligned}
    {\left(\int_{0}^{\infty}{\left[\int_{0}^{t}t^{-\frac{1}{p}}\phi(s)\,\frac{\mathrm{d}s}{s}\right]}^{q}\,\frac{\mathrm{d}t}{t}\right)}^{\frac{1}{q}} &\leq p{\left(\int_{0}^{\infty}{\left[s^{-\frac{1}{p}}\phi(s)\right]}^{q}\,\frac{\mathrm{d}s}{s}\right)}^{\frac{1}{q}}, \\
    {\left(\int_{0}^{\infty}{\left[\int_{t}^{\infty}t^{\frac{1}{p}}\phi(s)\,\frac{\mathrm{d}s}{s}\right]}^{q}\,\frac{\mathrm{d}t}{t}\right)}^{\frac{1}{q}} &\leq p{\left(\int_{0}^{\infty}{\left[s^{\frac{1}{p}}\phi(s)\right]}^{q}\,\frac{\mathrm{d}s}{s}\right)}^{\frac{1}{q}}.
\end{aligned}
\end{equation}

For $p,q\in[1,\infty]$ satisfying $(p=1\Rightarrow q=1)$ and $(p=\infty\Rightarrow q=\infty)$, the quasinormed space $\left(L^{p,q}(\mathbb{R}^{n}),{\|\cdot\|}_{L^{p,q}(\mathbb{R}^{n})}^{*}\right)$ is normable (either by a Lebesgue norm or a Lorentz norm). For a normable quasinormed space $(X,{\|\cdot\|}_{X}^{*})$, the \textbf{\textit{dual space}} $X^{*}$ is defined to be the space of linear maps $F:X\rightarrow\mathbb{R}$ for which the \textbf{\textit{dual norm}} ${\|F\|}_{X^{*}}:=\sup_{{\|\phi\|}_{X}^{*}\leq1}|F(\phi)|$ is finite. If $Y$ is a linear subspace of $X$, and $F:Y\rightarrow\mathbb{R}$ is a linear map satisfying $|F(\phi)|\leq M{\|\phi\|}_{X}^{*}$ for $\phi\in Y$, then by the Hahn-Banach theorem there exists $\tilde{F}\in X^{*}$ with ${\|\tilde{F}\|}_{X^{*}}\lesssim_{X}M$ and $\tilde{F}|_{Y}=F$. It follows that a normable quasinorm ${\|\cdot\|}_{X}^{*}$ is equivalent to its \textbf{\textit{bidual norm}} ${\|\phi\|}_{X^{**}}:=\sup_{{\|F\|}_{X^{*}}\leq1}|F(\phi)|$ for $\phi\in X$.

For the sake of completeness, we prove the following standard result concerning the dual space of $L^{p,q}(\mathbb{R}^{n})$ for $p<\infty$.
\begin{lemma}
    For $p,q\in[1,\infty)$ satisfying $(p=1\Rightarrow q=1)$, the map $u\mapsto F_{u}$ given by $F_{u}(\phi)=\langle u,\phi\rangle$ defines a Banach space isomorphism $L^{p',q'}(\mathbb{R}^{n})\rightarrow{\left(L^{p,q}(\mathbb{R}^{n})\right)}^{*}$.
\end{lemma}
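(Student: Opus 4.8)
The plan is to establish three things: that $u\mapsto F_u$ is well-defined and bounded with $\|F_u\|_{(L^{p,q})^*}\lesssim_{p,q}\|u\|_{L^{p',q'}}^*$, that it is bounded below (hence injective), and that it is surjective; the three together give the claimed isomorphism. Boundedness is immediate from the H\"older-type inequality for Lorentz spaces supplied by property (v): one has $|\langle u,\phi\rangle|\leq\int_0^\infty u^*(\tau)\phi^*(\tau)\,\mathrm{d}\tau$, and writing the integrand as $(\tau^{1/p'}u^*(\tau))(\tau^{1/p}\phi^*(\tau))$ against the measure $\mathrm{d}\tau/\tau$ and applying H\"older with conjugate exponents $q',q$ yields $|\langle u,\phi\rangle|\lesssim_{p,q}\|u\|_{L^{p',q'}}^*\,\|\phi\|_{L^{p,q}}^*$ (the endpoint $q=1$, $q'=\infty$ being the evident sup-times-integral variant, using $1/p+1/p'=1$). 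Hence $F_u\in(L^{p,q}(\mathbb{R}^n))^*$ with the stated bound.

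The main obstacle is the reverse estimate $\|u\|_{L^{p',q'}}^*\lesssim_{p,q}\|F_u\|_{(L^{p,q})^*}$, for which I would construct a near-extremal test function. Assume first $1<q<\infty$. I would take $\phi$ comonotone with $u$, i.e.\ $\phi=\sign(u)\,h(|u|)$ for a suitable nondecreasing $h\geq 0$, chosen so that its decreasing rearrangement is $\phi^*(\tau)=\tau^{q'/p'-1}(u^*(\tau))^{q'-1}$ and, crucially, so that equality holds in property (v), giving $\langle u,\phi\rangle=\int_0^\infty u^*\phi^*\,\mathrm{d}\tau$. A direct computation, using the identity $(q'-1)q=q'$ and $1/p+1/p'=1$, then gives $\langle u,\phi\rangle\approx(\|u\|_{L^{p',q'}}^*)^{q'}$ and $\|\phi\|_{L^{p,q}}^*\approx(\|u\|_{L^{p',q'}}^*)^{q'/q}$, so that the quotient $\langle u,\phi\rangle/\|\phi\|_{L^{p,q}}^*\approx\|u\|_{L^{p',q'}}^*$ and the reverse bound follows. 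For $q=1$ (so $q'=\infty$) I would instead test against a normalised signed indicator $\sign(u)\chi_{\{|u|>u^*(\tau)\}}$ of a super-level set to recover $\tau^{1/p'}u^*(\tau)$, passing between $f^*$ and $f^{**}$ via property (iii) where needed; the forced case $p=q=1$ (so $p'=q'=\infty$) reduces to the classical $(L^1)^*=L^\infty$ duality. Injectivity is then immediate. To make the construction rigorous I would truncate $u$ in height and restrict to balls so that all quantities are finite, carry out the estimate for the truncations, and pass to the limit by monotone convergence of the rearrangements.

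For surjectivity, given $F\in(L^{p,q}(\mathbb{R}^n))^*$, I use that $p<\infty$ forces every characteristic function $\chi_E$ of a finite-measure set into $L^{p,q}$, with $\|\chi_E\|_{L^{p,q}}^*=c_{p,q}|E|^{1/p}$. Thus $\nu(E):=F(\chi_E)$ is a finitely additive set function, absolutely continuous with respect to Lebesgue measure and countably additive on each ball (if $E_k\downarrow\emptyset$ then $\|\chi_{E_k}\|_{L^{p,q}}^*\to0$, so $\nu(E_k)\to0$). Radon--Nikodym on each ball, together with a patching/$\sigma$-finiteness argument, produces $u\in L^1_{\mathrm{loc}}(\mathbb{R}^n)$ with $F(\chi_E)=\int_E u=\langle u,\chi_E\rangle$ for all finite-measure $E$. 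By linearity this extends to simple functions, and by density of simple functions in $L^{p,q}$ (property (viii), valid since $p,q<\infty$) together with continuity of $F$, we conclude $F=F_u$. Finally, applying the lower bound of the previous paragraph to truncations of $u$ shows $u\in L^{p',q'}(\mathbb{R}^n)$ with $\|u\|_{L^{p',q'}}^*\lesssim_{p,q}\|F\|_{(L^{p,q})^*}<\infty$, which completes the proof that $u\mapsto F_u$ is a Banach space isomorphism. I expect the comonotone extremiser construction and its endpoint variants to be the only genuinely delicate point; the boundedness and the Radon--Nikodym steps are routine given the listed properties.
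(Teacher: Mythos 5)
Your boundedness step and your Radon--Nikodym route to surjectivity are sound (the latter genuinely differs from the paper, which instead restricts $F$ to $L^{p,1}\subseteq L^{p,q}$ and invokes Hunt's known $q=1$ duality to produce a representing $u\in L^{p',\infty}$), but there is a genuine gap in the reverse estimate, which is the crux of the whole lemma. The extremiser you describe cannot exist in general: you ask for $\phi$ whose decreasing rearrangement is $\phi^{*}(\tau)=\tau^{q'/p'-1}(u^{*}(\tau))^{q'-1}$, but when $q<p$ (so $q'/p'>1$) this function is typically \emph{increasing} on part of its support --- take $u=\chi_{E}$ with $|E|=1$, for which it equals $\tau^{q'/p'-1}$ on $(0,1)$ --- and a decreasing rearrangement is by definition non-increasing, so no such $\phi$ exists. (Separately, even when the formula is non-increasing, the recipe $\phi=\sign(u)\,h(|u|)$ cannot realise it whenever $u^{*}$ has a flat piece, since the right-hand side then varies in $\tau$ while $h(|u|)$ does not; one needs a measure-preserving rank map, which is exactly the paper's $\phi_{m}=\sign(u)\,\psi_{m}\circ\tau_{m}$.)

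The natural repair --- take $\phi=\sign(u)\,g\circ\tau_{u}$ with $g(\tau)=\tau^{q'/p'-1}(u^{*}(\tau))^{q'-1}$ not necessarily monotone --- still leaves a hole. The pairing does come out right, $\langle u,\phi\rangle=\int_{0}^{\infty}u^{*}g\,\mathrm{d}\tau\approx_{p,q}\bigl({\|u\|}_{L^{p',q'}}^{*}\bigr)^{q'}$, but the denominator ${\|\phi\|}_{L^{p,q}}^{*}$ is computed from $\phi^{*}=g^{*}$, not from $g$; and for $q<p$ the weight $t^{q/p-1}$ is decreasing, so Hardy--Littlewood gives $\int_{0}^{\infty}g^{q}\,t^{q/p-1}\,\mathrm{d}t\leq\int_{0}^{\infty}(g^{*})^{q}\,t^{q/p-1}\,\mathrm{d}t$ --- the wrong direction for your ``direct computation.'' One must actually prove $\int_{0}^{\infty}(g^{*})^{q}\,t^{q/p-1}\,\mathrm{d}t\lesssim_{p,q}\int_{0}^{\infty}g^{q}\,t^{q/p-1}\,\mathrm{d}t$ for this special $g$, and that is precisely the nontrivial content of the paper's proof (following Bennett--Sharpley, Theorem IV.4.7): it replaces $g$ by the decreasing continuous majorant $\psi(t)=\int_{t/e}^{\infty}s^{q'/p'-1}u^{*}(s)^{q'-1}\,\frac{\mathrm{d}s}{s}\gtrsim_{p,q}g(t)$, so that $g^{*}\lesssim_{p,q}\psi$, and then bounds the $L^{p,q}$ quasinorm of $\psi$ by $\bigl({\|u\|}_{L^{p',q'}}^{*}\bigr)^{q'/q}$ using Hardy's inequality. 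Your $q=1$ endpoint via signed indicators of super-level sets is fine (it is essentially Hunt's argument), and the truncation scaffolding is fine, but truncation does not cure the monotonicity failure --- $\chi_{E}$ is already bounded with bounded support --- so as written the lower bound, and hence the isomorphism, is not proved for $1<q<p$.
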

\begin{proof}
    By property (v) of Lorentz spaces, the map $u\mapsto F_{u}$ is well-defined, linear, injective and bounded. It remains to show that this map is surjective and that its inverse is bounded, so for each $F\in{\left(L^{p,q}(\mathbb{R}^{n})\right)}^{*}$ we seek $u\in L^{p',q'}(\mathbb{R}^{n})$ satisfying ${\|u\|}_{L^{p',q'}(\mathbb{R}^{n})}^{*} \lesssim_{p,q} {\|F\|}_{{\left(L^{p,q}(\mathbb{R}^{n})\right)}^{*}}$ and
    \begin{equation}\label{duality-goal}
        F(\phi) = \int_{\mathbb{R}^{n}}u(x)\phi(x)\,\mathrm{d}x\text{ for all }\phi\in L^{p,q}(\mathbb{R}^{n}).
    \end{equation}
    The case $q=1$ is proved in (\cite{hunt1966}, p.\ 261), so we assume that $p,q\in(1,\infty)$. If $p,q\in(1,\infty)$ and $F\in{\left(L^{p,q}(\mathbb{R}^{n})\right)}^{*}$, then $|F(\phi)|\leq {\|F\|}_{{\left(L^{p,q}(\mathbb{R}^{n})\right)}^{*}}{\|\phi\|}_{L^{p,q}(\mathbb{R}^{n})}^{*}\leq {\|F\|}_{{\left(L^{p,q}(\mathbb{R}^{n})\right)}^{*}}{\|\phi\|}_{L^{p,1}(\mathbb{R}^{n})}^{*}$ for all $\phi\in L^{p,1}(\mathbb{R}^{n})$, so by the case $q=1$ there exists $u\in L^{p,\infty}(\mathbb{R}^{n})$ satisfying
    \begin{equation}
        F(\phi) = \int_{\mathbb{R}^{n}}u(x)\phi(x)\,\mathrm{d}x\text{ for all }\phi\in L^{p,1}(\mathbb{R}^{n}).
    \end{equation}
    We make the claim (stated without proof in \cite{hunt1966}) that ${\|u\|}_{L^{p',q'}(\mathbb{R}^{n})}^{*}\lesssim_{p,q}{\|F\|}_{{\left(L^{p,q}(\mathbb{R}^{n})\right)}^{*}}$, which implies (\ref{duality-goal}). We argue along the lines of (\cite{bennett1988}, Theorem IV.4.7). Consider the simple functions $u_{m}=\sign(u)\min\{m,2^{-m}\lfloor2^{m}|u|\rfloor\}$, so that ${\|u_{m}\|}_{L^{p',q'}(\mathbb{R}^{n})}^{*}\overset{m\rightarrow\infty}{\rightarrow}{\|u\|}_{L^{p',q'}(\mathbb{R}^{n})}^{*}$ by monotone convergence. Since $u_{m}^{*}$ is a decreasing function, we have
    \begin{equation}
    \begin{aligned}
        {\left({\|u_{m}\|}_{L^{p',q'}(\mathbb{R}^{n})}^{*}\right)}^{q'} &\lesssim_{p,q} \int_{0}^{\infty}{\left[t^{\frac{1}{p'}}u_{m}^{*}(t)\right]}^{q'}\,\frac{\mathrm{d}t}{t} \\
        &= \int_{0}^{\infty}t^{\frac{q'}{p'}-1}u_{m}^{*}(t)^{q'-1}u_{m}^{*}(t)\,\mathrm{d}t \\
        &\leq \int_{0}^{\infty}\left(\int_{t/e}^{t}t^{\frac{q'}{p'}-1}u_{m}^{*}(s)^{q'-1}\,\frac{\mathrm{d}s}{s}\right)u_{m}^{*}(t)\,\mathrm{d}t \\
        &\lesssim_{p,q} \int_{0}^{\infty}\left(\int_{t/e}^{t}s^{\frac{q'}{p'}-1}u_{m}^{*}(s)^{q'-1}\,\frac{\mathrm{d}s}{s}\right)u_{m}^{*}(t)\,\mathrm{d}t \\
        &\leq \int_{0}^{\infty}\psi_{m}(t)u_{m}^{*}(t)\,\mathrm{d}t,
    \end{aligned}
    \end{equation}
    where $\psi_{m}(t) := \int_{t/e}^{\infty}s^{\frac{q'}{p'}-1}u_{m}^{*}(s)^{q'-1}\,\frac{\mathrm{d}s}{s}$ defines a decreasing continuous function on $(0,\infty)$. Since $u_{m}$ is a simple function, the map
    \begin{equation}
        \tau_{m}(x) := |\{z\in\mathbb{R}^{n}\text{ : }|u_{m}(z)|>|u_{m}(x)|\}|+|\{z\in\mathbb{R}^{n}\text{ : }|z|<|x|,|u_{m}(z)|=|u_{m}(x)|\}|
    \end{equation}
    defines a measure preserving transformation $\tau_{m}:(\mathbb{R}^{n},\mathrm{d}x)\rightarrow((0,\infty),\mathrm{d}t)$ satisfying $|u_{m}|=u_{m}^{*}\circ\tau_{m}$ almost everywhere. Writing $\phi_{m}=\sign(u)\psi_{m}\circ\tau_{m}$, we consider the simple functions $\phi_{m,j}=\sign(u)\min\{j,2^{-j}\lfloor2^{j}|\phi_{m}|\rfloor\}$, so by monotone convergence we have
    \begin{equation}
    \begin{aligned}
        \int_{0}^{\infty}\psi_{m}(t)u_{m}^{*}(t)\,\mathrm{d}t &= \int_{\mathbb{R}^{n}}\phi_{m}(x)u_{m}(x)\,\mathrm{d}x \\
        &= \lim_{j\rightarrow\infty}\int_{\mathbb{R}^{n}}\phi_{m,j}(x)u_{m}(x)\,\mathrm{d}x \\
        &\leq \lim_{j\rightarrow\infty}\int_{\mathbb{R}^{n}}\phi_{m,j}(x)u(x)\,\mathrm{d}x \\
        &= \lim_{j\rightarrow\infty}F(\phi_{m,j}) \\
        &\leq \lim_{j\rightarrow\infty}{\|F\|}_{{\left(L^{p,q}(\mathbb{R}^{n})\right)}^{*}}{\|\phi_{m,j}\|}_{L^{p,q}(\mathbb{R}^{n})}^{*} \\
        &= {\|F\|}_{{\left(L^{p,q}(\mathbb{R}^{n})\right)}^{*}}{\|\phi_{m}\|}_{L^{p,q}(\mathbb{R}^{n})}^{*}.
    \end{aligned}
    \end{equation}
    We use $\phi_{m}^{*}=\psi_{m}$, the substitution $r=es$ and Hardy's inequality to estimate
    \begin{equation}
    \begin{aligned}
        {\|\phi_{m}\|}_{L^{p,q}(\mathbb{R}^{n})}^{*} &\lesssim_{p,q} {\left(\int_{0}^{\infty}{\left[t^{\frac{1}{p}}\psi_{m}(t)\right]}^{q}\,\frac{\mathrm{d}t}{t}\right)}^{\frac{1}{q}} \\
        &= {\left(\int_{0}^{\infty}{\left[\int_{t/e}^{\infty}t^{\frac{1}{p}}s^{\frac{q'}{p'}-1}u_{m}^{*}(s)^{q'-1}\,\frac{\mathrm{d}s}{s}\right]}^{q}\,\frac{\mathrm{d}t}{t}\right)}^{\frac{1}{q}} \\
        &\lesssim_{p,q} {\left(\int_{0}^{\infty}{\left[\int_{t}^{\infty}t^{\frac{1}{p}}r^{\frac{q'}{p'}-1}u_{m}^{*}(r/e)^{q'-1}\,\frac{\mathrm{d}r}{r}\right]}^{q}\,\frac{\mathrm{d}t}{t}\right)}^{\frac{1}{q}} \\
        &\lesssim_{p,q} {\left(\int_{0}^{\infty}{\left[r^{\frac{q'-1}{p'}}u_{m}^{*}(r/e)^{q'-1}\right]}^{q}\,\frac{\mathrm{d}r}{r}\right)}^{\frac{1}{q}} \\
        &\lesssim_{p,q} {\left(\int_{0}^{\infty}{\left[s^{\frac{q'-1}{p'}}u_{m}^{*}(s)^{q'-1}\right]}^{q}\,\frac{\mathrm{d}s}{s}\right)}^{\frac{1}{q}} \\
        &= {\left(\int_{0}^{\infty}{\left[s^{\frac{1}{p}}u_{m}^{*}(s)\right]}^{q'}\,\frac{\mathrm{d}s}{s}\right)}^{\frac{1}{q}} \\
        &\lesssim_{p,q} {\left({\|u_{m}\|}_{L^{p',q'}(\mathbb{R}^{n})}^{*}\right)}^{\frac{q'}{q}}.
    \end{aligned}
    \end{equation}
    We conclude that
    \begin{equation}
    \begin{aligned}
        {\left({\|u_{m}\|}_{L^{p',q'}(\mathbb{R}^{n})}^{*}\right)}^{q} \lesssim_{p,q} \int_{0}^{\infty}\psi_{m}(t)u_{m}^{*}(t)\,\mathrm{d}t &\leq {\|F\|}_{{\left(L^{p,q}(\mathbb{R}^{n})\right)}^{*}}{\|\phi_{m}\|}_{L^{p,q}(\mathbb{R}^{n})}^{*} \\
        &\lesssim_{p,q}{\|F\|}_{{\left(L^{p,q}(\mathbb{R}^{n})\right)}^{*}}{\left({\|u_{m}\|}_{L^{p',q'}(\mathbb{R}^{n})}^{*}\right)}^{\frac{q'}{q}},
    \end{aligned}
    \end{equation}
    so ${\|u_{m}\|}_{L^{p',q'}(\mathbb{R}^{n})}^{*}\lesssim_{p,q} {\|F\|}_{{\left(L^{p,q}(\mathbb{R}^{n})\right)}^{*}}$ and hence ${\|u\|}_{L^{p',q'}(\mathbb{R}^{n})}^{*}\lesssim_{p,q} {\|F\|}_{{\left(L^{p,q}(\mathbb{R}^{n})\right)}^{*}}$.
\end{proof}
\section{Formulations of the linearised Navier-Stokes equations}
On the space-time domain $(0,T)\times\mathbb{R}^{n}$, we will discuss various formulations of the \textbf{\textit{linearised Navier-Stokes equations}}
\begin{equation}
    \left\{\begin{array}{l}u_{j}'-\Delta u_{j}+\nabla_{k}F_{jk}+\nabla_{j}P=0, \\ \nabla\cdot u=0, \\ u(0)=f, \end{array}\right.
\end{equation}
where the \textbf{\textit{Navier-Stokes equations}} are realised by taking $F_{jk}=u_{j}u_{k}$. Each formulation will be preceded by a discussion of some of the technical preliminaries relevant to that formulation.
\subsection{Weak* measurability and the weak formulation}
For a measurable space $(E,\mathcal{E})$, a function $u:E\rightarrow L_{\mathrm{loc}}^{1}(\mathbb{R}^{n})$ is said to be \textbf{\textit{weakly* measurable}} if the map $t\mapsto\langle u(t),\phi\rangle$ is measurable for all $\phi\in C_{c}^{\infty}(\mathbb{R}^{n})$. The following results make particular use of property (viii) from our discussion of Lorentz spaces.
\begin{lemma}
    For $p,q\in[1,\infty]$ with $(p=1\Rightarrow q=1)$ and $(p=\infty\Rightarrow q=\infty)$, a function $u:E\rightarrow L^{p,q}(\mathbb{R}^{n})$ is weakly* measurable if and only if the map $t\mapsto\langle u(t),\phi\rangle$ is measurable for all $\phi\in L^{p',q'}(\mathbb{R}^{n})$.
\end{lemma}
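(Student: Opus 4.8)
The statement is an equivalence, and the reverse direction is immediate: every $\phi\in C_{c}^{\infty}(\mathbb{R}^{n})$ is bounded with bounded support, hence lies in $L^{p',q'}(\mathbb{R}^{n})$, so measurability of $t\mapsto\langle u(t),\phi\rangle$ for all $\phi\in L^{p',q'}$ already includes the defining requirement of weak* measurability. The content is the forward direction, and the plan is to reduce it to the stability of measurability under pointwise limits. Concretely, for each fixed $\phi\in L^{p',q'}(\mathbb{R}^{n})$ I would produce a sequence $\phi_{m}\in C_{c}^{\infty}(\mathbb{R}^{n})$ with $\langle u(t),\phi_{m}\rangle\to\langle u(t),\phi\rangle$ for every $t\in E$; since weak* measurability makes each $t\mapsto\langle u(t),\phi_{m}\rangle$ measurable, the pointwise limit $t\mapsto\langle u(t),\phi\rangle$ is then measurable. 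Throughout, the first two parts of property (v) combined with H\"older's inequality show that $v\mapsto\langle v,\phi\rangle$ is a bounded functional on $L^{p,q}(\mathbb{R}^{n})$ of norm $\lesssim_{p,q}{\|\phi\|}_{L^{p',q'}(\mathbb{R}^{n})}^{*}$, so each $u(t)\in L^{p,q}$ pairs continuously against $L^{p',q'}$.

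I would split into two cases according to whether $q>1$. If $q>1$ then, by the hypothesis $(p=1\Rightarrow q=1)$, also $p>1$; consequently the conjugate indices satisfy $p',q'\in[1,\infty)$ together with $(p'=1\Rightarrow q'=1)$ (the latter by $(p=\infty\Rightarrow q=\infty)$), and property (viii) gives that $C_{c}^{\infty}(\mathbb{R}^{n})$ is norm-dense in $L^{p',q'}(\mathbb{R}^{n})$ (the instance $p=q=\infty$, where $L^{p',q'}=L^{1}$, is the familiar one). Here I would simply choose $\phi_{m}\to\phi$ in $L^{p',q'}$ and estimate, for each fixed $t$, $|\langle u(t),\phi-\phi_{m}\rangle|\lesssim_{p,q}{\|u(t)\|}_{L^{p,q}(\mathbb{R}^{n})}^{*}{\|\phi-\phi_{m}\|}_{L^{p',q'}(\mathbb{R}^{n})}^{*}\to0$, giving the required pointwise convergence in $t$ and closing this case.

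The remaining case $q=1$ is the main obstacle: then $q'=\infty$, the target space is $L^{p',\infty}(\mathbb{R}^{n})$, and $C_{c}^{\infty}$ is no longer norm-dense, so no norm approximation of $\phi$ is available. Instead I would argue by weak-* convergence against the predual. Since $p=\infty$ would force $q=\infty$, we have $p\in[1,\infty)$, so the preceding duality lemma gives $L^{p',\infty}(\mathbb{R}^{n})\cong{\left(L^{p,1}(\mathbb{R}^{n})\right)}^{*}$, and property (viii) makes $L^{p,1}(\mathbb{R}^{n})$ separable with $C_{c}^{\infty}$ dense. I would take $\phi_{m}:=\eta_{1/m}*(\phi\mathbf{1}_{B_{m}})\in C_{c}^{\infty}(\mathbb{R}^{n})$, with $\eta$ an even standard mollifier and $B_{m}$ the ball of radius $m$ (note $\phi\in L^{p',\infty}\subseteq L^{1}_{\mathrm{loc}}$, so this is well defined). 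Property (vi) gives ${\|\phi_{m}\|}_{L^{p',\infty}(\mathbb{R}^{n})}\leq{\|\phi\|}_{L^{p',\infty}(\mathbb{R}^{n})}$, so $(\phi_{m})$ is bounded in the dual; and for a test function $v\in C_{c}^{\infty}$ one transfers the mollifier onto $v$, uses that $B_{m}$ eventually contains the fixed compact support of $\eta_{1/m}*v$, and invokes the approximation-of-identity statement of property (viii) in $L^{p,1}$ to get $\langle v,\phi_{m}\rangle=\langle\eta_{1/m}*v,\phi\rangle\to\langle v,\phi\rangle$. A three-$\epsilon$ argument, combining this uniform bound on $(\phi_{m})$ with the density of $C_{c}^{\infty}$ in $L^{p,1}$, then upgrades the convergence to $\langle w,\phi_{m}\rangle\to\langle w,\phi\rangle$ for every $w\in L^{p,1}(\mathbb{R}^{n})$, and in particular for $w=u(t)$ at each $t\in E$. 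The delicate point is precisely this failure of norm-density in $L^{p',\infty}$, which forces the replacement of the clean norm approximation by a weak-* approximation resting on duality, separability, and the convolution and approximate-identity estimates of properties (vi) and (viii).
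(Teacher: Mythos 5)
Your proof is correct and follows essentially the same route as the paper's: approximate $\phi$ by mollified truncations (smooth, compactly supported), use weak* measurability for each approximant, and pass to the pointwise-in-$t$ limit, with the crux in the $q'=\infty$ case being the Fubini transfer of the mollifier onto the $L^{p,1}$ side where approximation of identity is available. The only cosmetic difference is that the paper applies this transfer directly to $u(t)\in L^{p,1}$ (so the convergence is immediate from the norm convergence $\tilde{\eta}_{\epsilon}*u(t)\rightarrow u(t)$ in $L^{p,1}$), whereas you transfer onto smooth test functions and then upgrade via the uniform bound on $(\phi_{m})$ and a three-$\epsilon$ density argument — a slightly longer but equivalent bookkeeping of the same idea.
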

\begin{proof}
    Define the cutoff function $\rho_{R}(x)=\rho(x/R)$ for $R\in\mathbb{Q}_{>0}$, where $\rho\in C_{c}^{\infty}(\mathbb{R}^{n})$ satisfies $\rho(x)=1$ for $|x|<1$ and $\rho(x)=0$ for $|x|>2$. Define also the approximate identity $\eta_{\epsilon}(x)=\epsilon^{-n}\eta(x/\epsilon)$ for $\epsilon\in\mathbb{Q}_{>0}$, where $\eta\in C_{c}^{\infty}(\mathbb{R}^{n})$ satisfies $\int_{\mathbb{R}^{n}}\eta(x)\,\mathrm{d}x=1$. Then the function
    \begin{equation}
        \langle u(t),\phi\rangle = \lim_{\epsilon\rightarrow0}\langle u(t),\eta_{\epsilon}*\phi\rangle = \lim_{\epsilon\rightarrow0}\lim_{R\rightarrow\infty}\langle u(t),\rho_{R}\cdot(\eta_{\epsilon}*\phi)\rangle
    \end{equation}
    is measurable, where the limit $R\rightarrow\infty$ follows from dominated convergence, and the limit $\epsilon\rightarrow0$ follows from approximation of identity (if necessary, use Fubini's theorem to throw the convolution onto whichever one of $u(t),\phi$ belongs to a separable Lorentz space).
\end{proof}

\pagebreak
\begin{lemma}\label{measurable-norms}
    Let $p,q\in[1,\infty)$ satisfy $(p=1\Rightarrow q=1)$.
    \begin{enumerate}[label=(\roman*)]
        \item If $u:E\rightarrow L^{p',q'}(\mathbb{R}^{n})$ is weakly* measurable, then $t\mapsto{\|u(t)\|}_{{\left(L^{p,q}(\mathbb{R}^{n})\right)}^{*}}$ is measurable.
        \item If $u:E\rightarrow L^{p,q}(\mathbb{R}^{n})$ is weakly* measurable, then $t\mapsto{\|u(t)\|}_{{\left(L^{p,q}(\mathbb{R}^{n})\right)}^{**}}$ is measurable.
    \end{enumerate}
\end{lemma}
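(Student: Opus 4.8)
The plan is to reduce each supremum defining the relevant norm to a \emph{countable} supremum of measurable functions, using the separability properties in property (viii) together with the preceding lemma of this subsection (weak$*$ measurability is equivalent to measurability of $t\mapsto\langle u(t),\phi\rangle$ against all dual test functions) and the duality lemma (which identifies ${(L^{p,q}(\mathbb{R}^{n}))}^{*}$ with $L^{p',q'}(\mathbb{R}^{n})$).

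For part (i), each $u(t)$ lies in $L^{p',q'}(\mathbb{R}^{n})$, so by the duality lemma $\phi\mapsto\langle u(t),\phi\rangle$ is a bounded linear functional on $L^{p,q}(\mathbb{R}^{n})$ and
\begin{equation*}
{\|u(t)\|}_{{(L^{p,q}(\mathbb{R}^{n}))}^{*}}=\sup_{{\|\phi\|}_{L^{p,q}(\mathbb{R}^{n})}^{*}\leq1}|\langle u(t),\phi\rangle|.
\end{equation*}
Since $p,q\in[1,\infty)$ with $(p=1\Rightarrow q=1)$, property (viii) makes $L^{p,q}(\mathbb{R}^{n})$ separable, hence its closed unit ball is a separable metric space; I would fix a countable dense subset $\{\phi_{k}\}$ of that ball. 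Continuity of $\phi\mapsto\langle u(t),\phi\rangle$ then collapses the supremum to $\sup_{k}|\langle u(t),\phi_{k}\rangle|$. Applying the preceding lemma with $(p,q)$ replaced by $(p',q')$ — permissible since $p'\in(1,\infty]$ (so the hypothesis $p'=1$ is vacuous) and $(p'=\infty\Rightarrow q'=\infty)$ follows from $(p=1\Rightarrow q=1)$ — shows each $t\mapsto\langle u(t),\phi_{k}\rangle$ is measurable, and a countable supremum of measurable functions is measurable.

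For part (ii), now $u(t)\in L^{p,q}(\mathbb{R}^{n})$, and identifying the dual with $L^{p',q'}(\mathbb{R}^{n})$,
\begin{equation*}
{\|u(t)\|}_{{(L^{p,q}(\mathbb{R}^{n}))}^{**}}=\sup_{{\|\phi\|}_{{(L^{p,q}(\mathbb{R}^{n}))}^{*}}\leq1}|\langle u(t),\phi\rangle|,
\end{equation*}
the supremum running over the closed unit ball $B$ of the dual. The key obstacle is that $B$ sits inside $L^{p',q'}(\mathbb{R}^{n})$, which need not be norm-separable (for instance when $q'=\infty$), so the argument of part (i) does not transfer directly. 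The resolution is to exploit separability of the \emph{predual} $L^{p,q}(\mathbb{R}^{n})$: by Banach--Alaoglu $B$ is weak$*$ compact, and separability of the predual makes the weak$*$ topology on $B$ metrizable, hence $B$ is weak$*$ separable. Fixing a countable weak$*$ dense $\{\phi_{k}\}\subseteq B$ and observing that $\phi\mapsto\langle u(t),\phi\rangle$ is weak$*$ continuous — it is evaluation at the fixed predual element $u(t)$ — the supremum over $B$ again reduces to $\sup_{k}|\langle u(t),\phi_{k}\rangle|$. Each $\phi_{k}\in L^{p',q'}(\mathbb{R}^{n})$, so the preceding lemma (applied directly to $u:E\to L^{p,q}(\mathbb{R}^{n})$) yields measurability of $t\mapsto\langle u(t),\phi_{k}\rangle$, and the countable supremum is measurable.

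Thus the only genuine difficulty is the one in part (ii): the bidual norm is a supremum over a unit ball in a possibly non-separable dual, and passing to the weak$*$ topology — where separability of the predual $L^{p,q}(\mathbb{R}^{n})$ furnishes a countable dense set — is what restores measurability. Everything else is the routine facts that countable suprema of measurable functions are measurable and that a dense (respectively weak$*$ dense) subset suffices to compute a supremum of a (respectively weak$*$) continuous functional, plus the index bookkeeping needed to invoke the preceding lemma correctly.
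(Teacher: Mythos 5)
Your proof is correct, and part (i) coincides with the paper's argument: a countable dense subset of the unit ball of $L^{p,q}(\mathbb{R}^{n})$ (available by property (viii)) collapses the supremum to a countable one, with each evaluation $t\mapsto\langle u(t),\phi_{k}\rangle$ measurable by the preceding lemma. Part (ii), however, takes a genuinely different route to the same reduction. The crux in both cases is producing a countable subset of the dual unit ball $B\subseteq L^{p',q'}(\mathbb{R}^{n})$ that is weak* dense, so that the bidual norm becomes a countable supremum. You obtain this abstractly: Banach--Alaoglu gives weak* compactness of $B$, separability of the predual $L^{p,q}(\mathbb{R}^{n})$ gives weak* metrizability of $B$, and a compact metrizable space is separable; evaluation at $u(t)$ is weak* continuous, so the supremum over a weak* dense countable set suffices. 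The paper instead constructs the countable set by hand: fixing a dense sequence ${(\phi_{m})}_{m\geq1}$ in $L^{p,q}(\mathbb{R}^{n})$, it chooses for each $m$ a countable family ${(F_{k,m})}_{k\geq1}\subseteq B$ whose truncated evaluation vectors ${(\langle F_{k,m},\phi_{l}\rangle)}_{l\leq m}$ are dense in $\left\{{(\langle F,\phi_{l}\rangle)}_{l\leq m}:F\in B\right\}\subseteq\mathbb{R}^{m}$, and then runs a diagonal argument (using the uniform bound ${\|F\|}_{{(L^{p,q})}^{*}}\leq1$ to pass from the $\phi_{l}$ to arbitrary $\phi$) to show this countable family is weak* sequentially dense in $B$. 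In effect the paper proves, in this concrete setting, exactly the separability statement you cite. Your version is shorter and cleaner at the cost of importing standard functional-analytic machinery, which requires the small observation (available from the paper's discussion of normability) that the quasinormed $L^{p,q}$ is normable so that Banach--Alaoglu and the metrizability theorem apply verbatim; the paper's version is elementary and self-contained, which fits its overall style of re-proving duality and measurability facts from scratch in the quasinorm setting. Both are complete proofs.
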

\begin{proof}
    \begin{enumerate}[label=(\roman*)]
        \item
        Let $A$ be a countable dense subset of $\{\phi\in L^{p,q}(\mathbb{R}^{n})\text{ : }{\|\phi\|}_{L^{p,q}(\mathbb{R}^{n})}^{*}\leq1\}$, so the function ${\|u(t)\|}_{{\left(L^{p,q}(\mathbb{R}^{n})\right)}^{*}}=\sup_{\phi\in A}\langle u(t),\phi\rangle$ is measurable.
        \item
        If there exists a countable family $A\subseteq B:=\{F\in L^{p',q'}(\mathbb{R}^{n})\text{ : }{\|F\|}_{{\left(L^{p,q}(\mathbb{R}^{n})\right)}^{*}}\leq1\}$, such that for each $F\in B$ there exists a sequence ${(F_{m})}_{m\geq1}$ in $A$ satisfying $\langle F_{m},\phi\rangle\overset{m\rightarrow\infty}{\rightarrow}\langle F,\phi\rangle$ for all $\phi\in L^{p,q}(\mathbb{R}^{n})$, then the function ${\|u(t)\|}_{{\left(L^{p,q}(\mathbb{R}^{n})\right)}^{**}}=\sup_{F\in A}\langle F,u(t)\rangle$ is measurable. We construct $A$ as follows. Let ${(\phi_{m})}_{m\geq1}$ be dense in $L^{p,q}(\mathbb{R}^{n})$, and observe that each family $\left\{{(\langle F,\phi_{l}\rangle)}_{l\leq m}\text{ : }F\in B\right\}\subseteq\mathbb{R}^{m}$ has a dense subset $\left\{{(\langle F_{k,m},\phi_{l}\rangle)}_{l\leq m}\text{ : }k\geq1\right\}$. We claim that $A={(F_{k,m})}_{k,m\geq1}$ is as required. Given $F\in B$, for each $m\geq1$ there exists $k(m)$ such that $\left|\langle F_{k(m),m}-F,\phi_{l}\rangle\right|<\frac{1}{m}$ for all $l\leq m$, so for any $\phi\in L^{p,q}(\mathbb{R}^{n})$ and $\epsilon>0$ we can choose $l(\epsilon)$ with ${\|\phi_{l(\epsilon)}-\phi\|}_{L^{p,q}(\mathbb{R}^{n})}^{*}<\epsilon$ to obtain
        \begin{equation}
            \limsup_{m\rightarrow\infty}\left|\langle F_{k(m),m}-F,\phi\rangle\right|\leq\limsup_{m\rightarrow\infty}\left(\left|\langle F_{k(m),m}-F,\phi_{l(\epsilon)}\rangle\right|+\left|\langle F_{k(m),m}-F,\phi_{l(\epsilon)}-\phi\rangle\right|\right) \leq 2\epsilon.
        \end{equation}
    \end{enumerate}
\end{proof}
\begin{lemma}
    (Pettis' theorem). Let $p,q\in[1,\infty)$ satisfy $(p=1\Rightarrow q=1)$. If the function $u:E\rightarrow L^{p,q}(\mathbb{R}^{n})$ is weakly* measurable, then there exist measurable functions $u_{m}:E\rightarrow L^{p,q}(\mathbb{R}^{n})$ with finite image, satisfying ${\|u_{m}(t)\|}_{{\left(L^{p,q}(\mathbb{R}^{n})\right)}^{**}}\leq{\|u(t)\|}_{{\left(L^{p,q}(\mathbb{R}^{n})\right)}^{**}}$ and ${\|u_{m}(t)-u(t)\|}_{{\left(L^{p,q}(\mathbb{R}^{n})\right)}^{**}}\overset{m\rightarrow\infty}{\rightarrow}0$ pointwise.
\end{lemma}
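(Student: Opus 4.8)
The plan is to prove this by a nearest-point simple-function construction, carried out throughout with the genuine norm ${\|\cdot\|}_{{\left(L^{p,q}(\mathbb{R}^{n})\right)}^{**}}$ rather than the quasinorm. Write $X=L^{p,q}(\mathbb{R}^{n})$ and abbreviate ${\|\cdot\|}:={\|\cdot\|}_{X^{**}}$; recall from the preceding discussion that this is an honest norm, equivalent to ${\|\cdot\|}_{L^{p,q}(\mathbb{R}^{n})}^{*}$. Committing to ${\|\cdot\|}$ is what makes the measurability painless, which is precisely why the statement is phrased with the bidual norm. First I would record the two measurability facts I need. For any fixed $x\in X$ the translate $u(\cdot)-x:E\rightarrow L^{p,q}(\mathbb{R}^{n})$ is again weakly* measurable (a weakly* measurable map minus a constant), so Lemma \ref{measurable-norms}(ii) applied to it shows that $t\mapsto{\|u(t)-x\|}$ is measurable; taking $x=0$ gives measurability of $t\mapsto{\|u(t)\|}$. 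These are the only measurable functions of $t$ that enter the construction. Next, by property (viii) the space $X$ is separable, so I fix a countable set ${(x_{j})}_{j\geq1}$ that is dense in $(X,{\|\cdot\|})$ (density in the quasinorm is density in the equivalent norm ${\|\cdot\|}$), chosen so that $x_{1}=0$.

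The crux is a constrained approximation lemma: for $\phi\in X$ with $\phi\neq0$ and any $\delta>0$, there is a dense point $x_{j}$ with ${\|x_{j}\|}\leq{\|\phi\|}$ and ${\|x_{j}-\phi\|}<\delta$ \emph{simultaneously}. To see this, set $y=(1-\eta)\phi$ with $\eta\in(0,1)$ chosen so small that $\eta{\|\phi\|}<\delta/2$; then ${\|y\|}=(1-\eta){\|\phi\|}$ and ${\|y-\phi\|}=\eta{\|\phi\|}<\delta/2$ by homogeneity. By density pick $x_{j}$ with ${\|x_{j}-y\|}<\min\{\delta/2,\,(\eta/2){\|\phi\|}\}$, so that ${\|x_{j}\|}\leq{\|y\|}+{\|x_{j}-y\|}<{\|\phi\|}$ and ${\|x_{j}-\phi\|}\leq{\|x_{j}-y\|}+{\|y-\phi\|}<\delta$. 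This argument uses only that ${\|\cdot\|}$ is a genuine norm, so that one may first contract $\phi$ toward the origin to strictly lower its norm and only then invoke density.

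With this in hand I define the approximants. For each $m$, let $u_{m}(t)$ be the value $x_{j}$, $1\leq j\leq m$, minimising ${\|u(t)-x_{j}\|}$ subject to the constraint ${\|x_{j}\|}\leq{\|u(t)\|}$, breaking ties by smallest index. Since $j=1$ (with $x_{1}=0$) is always admissible, $u_{m}$ is well defined and has image contained in $\{x_{1},\dots,x_{m}\}$, hence finite. Each level set $\{t:u_{m}(t)=x_{j}\}$ is cut out by finitely many inequalities among the measurable functions $t\mapsto{\|u(t)-x_{i}\|}$ and $t\mapsto{\|u(t)\|}$, so $u_{m}$ is measurable. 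The domination ${\|u_{m}(t)\|}\leq{\|u(t)\|}$ is exactly the imposed constraint. For pointwise convergence, fix $t$: if $u(t)=0$ then $u_{m}(t)=0$; if $u(t)\neq0$ then for any $\delta>0$ the constrained approximation lemma yields an admissible index $j_{0}$ with ${\|u(t)-x_{j_{0}}\|}<\delta$, so for every $m\geq j_{0}$ the minimising choice gives ${\|u(t)-u_{m}(t)\|}\leq{\|u(t)-x_{j_{0}}\|}<\delta$, whence ${\|u_{m}(t)-u(t)\|}\rightarrow0$.

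The main obstacle is reconciling the norm-domination requirement with the approximation requirement, since the plain nearest dense point to $u(t)$ may have slightly larger norm than $u(t)$; the resolution is the constrained approximation lemma of the second paragraph, which leverages the fact that the bidual norm is homogeneous and satisfies the triangle inequality. Everything else is a routine verification of measurability of finitely many level sets.
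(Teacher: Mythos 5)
Your proposal is correct and follows essentially the same construction as the paper's proof: a nearest-point minimisation over the first $m$ elements of a countable dense sequence containing $0$, constrained by the norm-domination requirement, with ties broken by smallest index, finite image by construction, and measurability of the level sets coming from Lemma \ref{measurable-norms}(ii) applied to the translates $t\mapsto u(t)-x_{j}$. The only difference is cosmetic: the paper arranges the constrained approximation by closing its dense set under rational scalar multiples (the set $\mathbb{Q}A$ with $f_{0}=0$), whereas you contract the target to $(1-\eta)\phi$ before invoking density --- the same idea, and your version has the merit of spelling out explicitly the step the paper leaves implicit.
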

\begin{proof}
    Let $A$ be a countable dense subset of $L^{p,q}(\mathbb{R}^{n})$, let $\mathbb{Q}A=\left\{qa\text{ : }q\in\mathbb{Q},\,a\in A\right\}$, and let ${(f_{j})}_{j\geq0}$ be an enumeration of $\mathbb{Q}A$ with $f_{0}=0$. For each $m\geq0$ and $t\in E$, define the non-empty set
    \begin{equation}
        K(m,t) := \left\{j\in\{0,\cdots,m\}\text{ : }{\|f_{j}\|}_{{\left(L^{p,q}(\mathbb{R}^{n})\right)}^{**}}\leq{\|u(t)\|}_{{\left(L^{p,q}(\mathbb{R}^{n})\right)}^{**}}\right\}
    \end{equation}
    and the integer
    \begin{equation}
        k(m,t) := \min\left\{j\in K(m,t)\text{ : }{\|f_{j}-u(t)\|}_{{\left(L^{p,q}(\mathbb{R}^{n})\right)}^{**}}=\min_{i\in K(m,t)}{\|f_{i}-u(t)\|}_{{\left(L^{p,q}(\mathbb{R}^{n})\right)}^{**}}\right\}.
    \end{equation}
    Then $u_{m}(t):=f_{k(m,t)}$ defines a measurable function $u_{m}:E\rightarrow L^{p,q}(\mathbb{R}^{n})$ with finite image, satisfying ${\|u_{m}(t)\|}_{{\left(L^{p,q}(\mathbb{R}^{n})\right)}^{**}}\leq{\|u(t)\|}_{{\left(L^{p,q}(\mathbb{R}^{n})\right)}^{**}}$ and ${\|u_{m}(t)-u(t)\|}_{{\left(L^{p,q}(\mathbb{R}^{n})\right)}^{**}}\overset{m\rightarrow\infty}{\rightarrow}0$ pointwise.
\end{proof}
For $T\in(0,\infty]$, and $\alpha,p,q\in[1,\infty]$ satisfying $(p=1\Rightarrow q=1)$ and $(p=\infty\Rightarrow q=\infty)$, we write $\mathcal{L}_{p,q}^{\alpha;*}(T)$ to denote (equivalence classes of) weakly* measurable functions $u:(0,T)\rightarrow L^{p,q}(\mathbb{R}^{n})$ satisfying ``${\|u\|}_{L^{p,q}(\mathbb{R}^{n})}^{*}\in L^{\alpha}((0,T))$'', understood in the sense  that the quasinorm ${\|u(t)\|}_{L^{p,q}(\mathbb{R}^{n})}^{*}$ is equivalent to a norm which is a measurable function of $t$. If $\alpha,p,q\in[1,\infty)$ satisfy $(p=1\Rightarrow q=1)$, and $u\in\mathcal{L}_{p,q}^{\alpha;*}(T)$, then the approximating functions from Pettis' theorem are simple (supported on sets of fnite measure), and satisfy ${\left\|t\mapsto{\|u_{m}(t)-u(t)\|}_{{\left(L^{p,q}(\mathbb{R}^{n})\right)}^{**}}\right\|}_{L^{\alpha}((0,T))}\overset{m\rightarrow\infty}{\rightarrow}0$ by dominated convergence. We write $\mathcal{L}_{p,q}^{\alpha;*}(T_{-}):=\cap_{T'\in(0,T)}\mathcal{L}_{p,q}^{\alpha;*}(T)$, we remove the $q$ from the notation in the case $p=q$, and we remove the * from the notation if $u$ defines a measurable function on $(0,T)\times\mathbb{R}^{n}$.

Sometimes we will want to replace the interval $(0,T)$ by the interval $(-\infty,T)$; in such situations, we will write $\tilde{\mathcal{L}}$ instead of $\mathcal{L}$. One such situation is the following approximation lemma, which will play an important role in establishing energy estimates for the Navier-Stokes equations.

\pagebreak
\begin{lemma}\label{approximation-lemma}
    Let $\alpha\in(1,\infty)$, let $p,q\in[1,\infty)$ satisfy $(p=1\Rightarrow q=1)$, and let $\eta_{\epsilon}(t)=\epsilon^{-1}\eta(t/\epsilon)$ for some $\eta\in L^{1}((-1,1))$ satisfying $\int_{-1}^{1}\eta(t)\,\mathrm{d}t=1$. For $T\in(-\infty,\infty]$, $u\in\tilde{\mathcal{L}}_{p,q}^{\alpha}(T_{-})$ and $U\in\tilde{\mathcal{L}}_{p',q'}^{\alpha'}(T_{-})$, define the mollified functions
    \begin{equation}
        u^{\epsilon}(t,x) := \int_{-\infty}^{T}\eta_{\epsilon}(t-s)u(s,x)\,\mathrm{d}s, \quad U^{\epsilon}(t,x) := \int_{-\infty}^{T}\eta_{\epsilon}(t-s)U(s,x)\,\mathrm{d}s
    \end{equation}
    for almost every $(t,x)\in(-\infty,T-\epsilon)\times\mathbb{R}^{n}$. Then for all $T'\in(-\infty,T)$ we have
    \begin{enumerate}[label=(\roman*)]
        \item $u^{\epsilon}\rightarrow u$ in $\tilde{\mathcal{L}}_{p,q}^{\alpha}(T')$ as $\epsilon\rightarrow0$,
        \item $\langle U^{\epsilon},u^{\epsilon}\rangle\rightarrow\langle U,u\rangle$ in $L^{1}((-\infty,T'))$ as $\epsilon\rightarrow0$.
    \end{enumerate}
\end{lemma}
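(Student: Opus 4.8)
The plan is to view both parts as statements about convolution in the time variable valued in the Banach spaces $L^{p,q}(\mathbb{R}^{n})$ and $L^{p',q'}(\mathbb{R}^{n})$, reducing everything to continuity of translation. Fix $T'\in(-\infty,T)$; for $\epsilon<T-T'$ and $t<T'$ the truncation at $T$ plays no role, so the substitution $s=t-\epsilon\sigma$ together with $\int_{-1}^{1}\eta=1$ gives $u^{\epsilon}(t,\cdot)-u(t,\cdot)=\int_{-1}^{1}\eta(\sigma)\,(u(t-\epsilon\sigma,\cdot)-u(t,\cdot))\,\mathrm{d}\sigma$. Applying property (iv) in $x$ and Minkowski's inequality in $t$ yields
\[
{\|u^{\epsilon}-u\|}_{\tilde{\mathcal{L}}_{p,q}^{\alpha}(T')}\leq\int_{-1}^{1}|\eta(\sigma)|\,{\|u(\cdot-\epsilon\sigma)-u\|}_{\tilde{\mathcal{L}}_{p,q}^{\alpha}(T')}\,\mathrm{d}\sigma,
\]
so for part (i) it suffices to prove the time-translation continuity ${\|u(\cdot-h)-u\|}_{\tilde{\mathcal{L}}_{p,q}^{\alpha}(T')}\to0$ as $h\to0$ and conclude by dominated convergence in $\sigma$ (dominated by $2|\eta(\sigma)|\,{\|u\|}_{\tilde{\mathcal{L}}_{p,q}^{\alpha}(T')}$). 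Since $p,q<\infty$ with $(p=1\Rightarrow q=1)$, the space $L^{p,q}(\mathbb{R}^{n})$ is separable by property (viii), so by Pettis' theorem and the remark following it the simple functions are dense in $\tilde{\mathcal{L}}_{p,q}^{\alpha}(T')$; for a simple function $\sum_{k}\chi_{A_{k}}(\cdot)v_{k}$ (finitely many $v_{k}\in L^{p,q}(\mathbb{R}^{n})$ and $|A_{k}|<\infty$) time-translation continuity reduces to the scalar fact ${\|\chi_{A_{k}+h}-\chi_{A_{k}}\|}_{L^{\alpha}}\to0$, and the general case follows since translation is an $L^{\alpha}$-isometry.

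For part (ii) I would split
\[
\langle U^{\epsilon},u^{\epsilon}\rangle-\langle U,u\rangle=\langle U^{\epsilon},u^{\epsilon}-u\rangle+\langle U^{\epsilon}-U,u\rangle.
\]
The first term is estimated in $L^{1}((-\infty,T'))$ by the Lorentz--H\"older inequality (property (v)) in $x$ followed by H\"older in $t$, giving a bound ${\|U^{\epsilon}\|}_{\tilde{\mathcal{L}}_{p',q'}^{\alpha'}(T')}\,{\|u^{\epsilon}-u\|}_{\tilde{\mathcal{L}}_{p,q}^{\alpha}(T')}$ up to a constant; here ${\|U^{\epsilon}\|}_{\tilde{\mathcal{L}}_{p',q'}^{\alpha'}(T')}\leq{\|\eta\|}_{L^{1}}{\|U\|}_{\tilde{\mathcal{L}}_{p',q'}^{\alpha'}(T')}$ by property (iv) and Young's inequality (uniformly in $\epsilon$), while the second factor tends to $0$ by part (i) applied to $u$. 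For the second term I would write $\langle U^{\epsilon}(t)-U(t),u(t)\rangle=\int_{-1}^{1}\eta(\sigma)\,\langle U(t-\epsilon\sigma)-U(t),u(t)\rangle\,\mathrm{d}\sigma$, dominate uniformly in $\sigma$ by the finite quantity $2\,{\|U\|}_{\tilde{\mathcal{L}}_{p',q'}^{\alpha'}(T')}{\|u\|}_{\tilde{\mathcal{L}}_{p,q}^{\alpha}(T')}$, and reduce to showing $\int_{-\infty}^{T'}|\langle U(t-h)-U(t),u(t)\rangle|\,\mathrm{d}t\to0$ as $h\to0$. Approximating $u$ by a simple function $\sum_{k}\chi_{A_{k}}v_{k}$ as before, this reduces to $\int_{A_{k}}|g_{k}(t-h)-g_{k}(t)|\,\mathrm{d}t\to0$ for the scalar functions $g_{k}(t):=\langle U(t),v_{k}\rangle$, which belong to $L^{\alpha'}((-\infty,T'))$ by property (v) and H\"older; since $\alpha'<\infty$ and $|A_{k}|<\infty$ this is once more scalar translation continuity.

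I expect the main obstacle to be the second term of part (ii) in the endpoint $p=q=1$, where $p'=q'=\infty$ and $U$ takes values in the non-separable space $L^{\infty}(\mathbb{R}^{n})$; there strong time-translation continuity of $U$ can fail, so one cannot simply apply part (i) to $U$, and a symmetric treatment breaks down. The decomposition above is chosen precisely to avoid ever measuring a difference of translates of $U$ in the $L^{p',q'}$-quasinorm: in the second term the translated $U$ is always paired with the fixed factor $u$, which (after reducing $u$ to a simple function) turns the problem into scalar $L^{\alpha'}$ translation continuity of the pairings $\langle U(\cdot),v_{k}\rangle$, valid for an arbitrary Banach range. The remaining points to verify carefully are the uniform-in-$\epsilon$ Young bound via property (iv), the measurability of the $g_{k}$ (which follows from weak* measurability of $U$ as in the earlier lemmas), and the harmless role of the truncation at $T$, all of which are routine.
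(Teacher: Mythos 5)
Your proposal is correct and follows essentially the same strategy as the paper's proof: approximate $u$ (and only $u$) by simple functions via Pettis' theorem, exploiting separability of $L^{p,q}(\mathbb{R}^{n})$, and reduce both parts to scalar translation continuity in $L^{\alpha}$ and $L^{\alpha'}$, with $U$ entering only through the scalar pairings $\langle U(\cdot),v_{k}\rangle\in L^{\alpha'}$ --- exactly the mechanism the paper implements by writing $u=\sum_{i}u_{i}f_{i}$ and $U_{i}=\langle U,f_{i}\rangle$ after its finite-dimensional (Hahn--Banach) reduction. Your bilinear splitting in (ii) and the paper's component-wise treatment are the same argument in different bookkeeping, and both correctly avoid ever needing norm translation continuity of $U$ in the possibly non-separable space $L^{p',q'}(\mathbb{R}^{n})$.
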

\begin{proof}
    Approximating $u$ via Pettis' theorem, it suffices to consider the case where the image of $u:(-\infty,T)\rightarrow L^{p,q}(\mathbb{R}^{n})$ is contained within a finite dimensional subspace $X\subseteq L^{p,q}(\mathbb{R}^{n})$. Let ${(f_{i})}_{i=1}^{m}$ be a basis for $X$, and let ${(\theta_{i})}_{i=1}^{m}$ be the basis for $X^{*}$ satisfying $\theta_{i}(f_{j})=\delta_{ij}$. By the Hahn-Banach theorem, there exist $F_{i}\in L^{p',q'}(\mathbb{R}^{n})$ satisfying $\langle F_{i},f\rangle=\theta_{i}(f)$ for all $f\in X$. Then $u=\sum_{i=1}^{m}u_{i}f_{i}$, and $\langle U,f\rangle=\langle\sum_{i=1}^{m}U_{i}F_{i},f\rangle$ for all $f\in X$, where the functions $u_{i}:=\langle F_{i},u\rangle$ and $U_{i}:=\langle U,f_{i}\rangle$ are measurable on $(-\infty,T)$. It therefore suffices to consider the functions $\mathbf{u}={(u_{i})}_{i=1}^{m}\in\cap_{T'\in(-\infty,T)}L^{\alpha}((-\infty,T');\mathbb{R}^{m})$ and $\mathbf{U}={(U_{i})}_{i=1}^{m}\in\cap_{T'\in(-\infty,T)}L^{\alpha'}((-\infty,T');\mathbb{R}^{m})$. For $T'\in(-\infty,T-\epsilon)$, we use H\"{o}lder's inequality and continuity of translation to estimate
    \begin{equation}
    \begin{aligned}
        {\|\mathbf{u}^{\epsilon}-\mathbf{u}\|}_{L^{\alpha}((-\infty,T');\mathbb{R}^{m})} &= {\left(\int_{-\infty}^{T'}{\left|\int_{-\epsilon}^{\epsilon}\eta_{\epsilon}(s)\left(\mathbf{u}(t-s)-\mathbf{u}(t)\right)\,\mathrm{d}s\right|}^{\alpha}\,\mathrm{d}t\right)}^{\frac{1}{\alpha}} \\
        &\leq {\left(\int_{-\infty}^{T'}{\left(\int_{-\epsilon}^{\epsilon}|\eta_{\epsilon}(s)|\,\mathrm{d}s\right)}^{\alpha-1}\left(\int_{-\epsilon}^{\epsilon}|\eta_{\epsilon}(s)|{|\mathbf{u}(t-s)-\mathbf{u}(t)|}^{\alpha}\,\mathrm{d}s\right)\,\mathrm{d}t\right)}^{\frac{1}{\alpha}} \\
        &= {\|\eta\|}_{L^{1}(\mathbb{R})}^{1-\frac{1}{\alpha}}{\left(\int_{-\epsilon}^{\epsilon}\left(\int_{-\infty}^{T'}{|\mathbf{u}(t-s)-\mathbf{u}(t)|}^{\alpha}\,\mathrm{d}t\right)|\eta_{\epsilon}(s)|\,\mathrm{d}s\right)}^{\frac{1}{\alpha}} \overset{\epsilon\rightarrow0}{\rightarrow}0,
    \end{aligned}
    \end{equation}
    and analogously ${\|\mathbf{U}^{\epsilon}-\mathbf{U}\|}_{L^{\alpha'}((-\infty,T');\mathbb{R}^{m})}\overset{\epsilon\rightarrow0}{\rightarrow}0$, so $\mathbf{U}^{\epsilon}\cdot\mathbf{u}^{\epsilon}\overset{\epsilon\rightarrow0}{\rightarrow}\mathbf{U}\cdot\mathbf{u}$ in $L^{1}((-\infty,T');\mathbb{R})$.
\end{proof}
We are now in a position to define the \textbf{\textit{weak formulation}} of the linearised Navier-Stokes equations. For $T\in(0,\infty]$, $p_{0},p,\tilde{p}\in(1,\infty)$, $f\in L^{p_{0},\infty}(\mathbb{R}^{n}))$ satisfying $\langle f_{i},\nabla_{i}\phi\rangle$ for all $\phi\in C_{c}^{\infty}(\mathbb{R}^{n})$, and $F\in\mathcal{L}_{\tilde{p},\infty}^{1;*}(T_{-})$, we say that $u\in\mathcal{L}_{p,\infty}^{1;*}(T_{-})$ is a \textbf{\textit{weak solution to the linearised Navier-Stokes equations}} if the exists $P\in\mathcal{L}_{\tilde{p},\infty}^{1;*}(T_{-})$ such that
\begin{equation}
    \left\{\begin{array}{ll}
        \langle f_{j},\phi_{j}(0)\rangle + \int_{0}^{T}\left(\langle u_{j},\phi_{j}'+\Delta\phi_{j}\rangle + \langle F_{jk},\nabla_{k}\phi_{j}\rangle + \langle P,\nabla_{j}\phi_{j}\rangle\right)\,\mathrm{d}t = 0 & \forall\,\phi\in C_{c}^{\infty}\left([0,T)\times\mathbb{R}^{n}\right) \\
        \int_{0}^{T}\langle u_{j},\nabla_{j}\psi\rangle\,\mathrm{d}t = 0 & \forall\,\psi\in C_{c}^{\infty}\left((0,T)\times\mathbb{R}^{n}\right) \\
    \end{array}\right.
\end{equation}
where measurability issues are addressed by approximating the test functions using Pettis' theorem. This definition depends a priori on the index $\tilde{p}$, since we seek a pressure term $P\in\mathcal{L}_{\tilde{p},\infty}^{1;*}(T_{-})$. However, we will eventually prove that the weak formulation is equivalent to another formulation which does not depend on $\tilde{p}$.

For $T\in(0,\infty]$, $p_{0},p\in(1,\infty)$, and $f\in L^{p_{0},\infty}(\mathbb{R}^{n}))$ satisfying $\langle f_{i},\nabla_{i}\phi\rangle$ for all $\phi\in C_{c}^{\infty}(\mathbb{R}^{n})$, we say that $u\in\mathcal{L}_{2p,\infty}^{2;*}(T_{-})$ is a \textbf{\textit{weak solution to the Navier-Stokes equations}} if $u$ is a weak  solution to the linearised Navier-Stokes equations with $F_{jk}=u_{j}u_{k}$. To show that $u_{j}u_{k}$ is weakly* measurable, we write $\langle u_{j}(t)u_{k}(t),\phi\rangle=\langle u_{j}(t),u_{k}(t)\phi\rangle$ for $\phi\in C_{c}^{\infty}(\mathbb{R}^{n})$, and we approximate the weakly* measurable function $u_{k}\phi:(0,T)\rightarrow L^{\frac{2p}{2p-1},1}(\mathbb{R}^{n})$ using Pettis' theorem.

\pagebreak
\subsection{The Riesz transform and the projected formulation}
The Riesz transform $\mathcal{R}$ gives a precise meaning to the pseudo-differential operator $\frac{-\nabla}{\sqrt{-\Delta}}$.
\begin{lemma}\label{riesz-lemma}
    There exists a unique linear map
    \begin{equation}
        \mathcal{R}:\cup_{p\in(1,\infty)}\left(L^{1}(\mathbb{R}^{n})+L^{p,\infty}(\mathbb{R}^{n})\right)\rightarrow\cup_{p\in(1,\infty)}\left(L^{1}(\mathbb{R}^{n})+L^{p,\infty}(\mathbb{R}^{n})\right)
    \end{equation}
    which satisfies
    \begin{equation}
        \mathcal{R}f = \mathcal{F}^{-1}\left[\xi\mapsto\frac{-\mathrm{i}\xi}{|\xi|}\mathcal{F}f(\xi)\right] \quad \text{for all }f\in L^{2}(\mathbb{R}^{n}),
    \end{equation}
    \begin{equation}
        \left\{\begin{array}{ll}{\|\mathcal{R}f\|}_{L^{p,q}(\mathbb{R}^{n})}^{*} \lesssim_{n,p,q} {\|f\|}_{L^{p,q}(\mathbb{R}^{n})}^{*} \text{ and }\langle\mathcal{R}f,g\rangle=-\langle f,\mathcal{R}g\rangle \\ \text{for all }p\in(1,\infty),\text{ }q\in[1,\infty],\text{ }f\in L^{p,q}(\mathbb{R}^{n})\text{ }g\in L^{p',q'}(\mathbb{R}^{n}). \end{array}\right.
    \end{equation}
\end{lemma}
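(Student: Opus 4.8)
The plan is to construct $\mathcal{R}$ from its Fourier definition on $L^{2}$ by the classical Calder\'on--Zygmund theory, upgrade the resulting Lebesgue bounds to Lorentz bounds through property (ix), and treat the non-separable endpoints $q=\infty$ (and the weak-type endpoint $p=1$) separately. First I would take the multiplier formula as the definition of $\mathcal{R}$ on $L^{2}(\mathbb{R}^{n})$; since the multiplier $-\mathrm{i}\xi/|\xi|$ has modulus one this is bounded $L^{2}\to L^{2}$, and it coincides on Schwartz functions with the principal-value convolution against the odd, homogeneous Riesz kernel $c_{n}\,x/|x|^{n+1}$. As this kernel satisfies the standard size and H\"ormander smoothness conditions, the Calder\'on--Zygmund theorem extends $\mathcal{R}$ consistently to a bounded operator on $L^{r}(\mathbb{R}^{n})$ for every $r\in(1,\infty)$ and to a weak-type $(1,1)$ map $L^{1}\to L^{1,\infty}$, all extensions agreeing on the dense subspace of Schwartz functions.

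Next I would invoke property (ix). Fixing $p\in(1,\infty)$ and $q\in[1,\infty]$ and choosing Lebesgue endpoints $1<p_{0}<p<p_{1}<\infty$, the linearity of $\mathcal{R}$ supplies the quasi-additivity hypothesis with $K=1$, and the strong-type bounds at $p_{0},p_{1}$ supply the endpoint estimates, so property (ix) yields simultaneously the inclusion $L^{p,\infty}\subseteq L^{p_{0}}+L^{p_{1}}$ and the bound $\|\mathcal{R}f\|_{L^{p,q}}^{*}\lesssim_{n,p,q}\|f\|_{L^{p,q}}^{*}$ (taking second index $s=q$). The inclusion is what makes $\mathcal{R}$ well-defined on the whole domain: for $f\in L^{1}+L^{p,\infty}$ I would write $f=f_{1}+f_{2}$ with $f_{2}\in L^{p_{0}}+L^{p_{1}}$ and set $\mathcal{R}f:=\mathcal{R}f_{1}+\mathcal{R}f_{2}$ summandwise, independence of the decomposition following because competing decompositions differ by elements of intersections (such as $L^{1}\cap(L^{p_{0}}+L^{p_{1}})$) on which $\mathcal{R}$ is already unambiguous through the Schwartz definition.

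The antisymmetry $\langle\mathcal{R}f,g\rangle=-\langle f,\mathcal{R}g\rangle$ for $f\in L^{p,q}$, $g\in L^{p',q'}$ I would establish in two reductions. The base identity for $f,g\in C_{c}^{\infty}(\mathbb{R}^{n})$ follows from Plancherel and the oddness of the multiplier $\xi\mapsto-\mathrm{i}\xi/|\xi|$. Since the index constraints preclude $q=q'=\infty$, at least one factor is separable; approximating the test function in that factor by $C_{c}^{\infty}$ functions (property (viii)) and passing to the limit using property (v) and the Lorentz bound, I reduce to the case where one argument, say $g$, lies in $C_{c}^{\infty}$ while the other, $f$, is a general element of $L^{p,q}$. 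For such $g$ both $g$ and $\mathcal{R}g$ are well-behaved — indeed $\mathcal{R}g\in L^{p',1}$ by property (vii) — so splitting $f=f_{0}+f_{1}\in L^{p_{0}}+L^{p_{1}}$ the two pairings distribute over the decomposition, and on each Lebesgue space the identity is the standard $L^{p_{i}}$ antisymmetry, proved there by density of Schwartz functions; summing the two pieces gives the identity for $f$.

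Finally, for uniqueness the Fourier condition forces $\mathcal{R}$ on $L^{2}$, hence on Schwartz functions, and the Lorentz bound then forces it on every separable $L^{p,q}$ with $q<\infty$ by density; at the endpoint $q=\infty$ the required antisymmetry expresses $\langle\mathcal{R}f,g\rangle=-\langle f,\mathcal{R}g\rangle$ for all $g$ in the separable predual $L^{p',1}$, whose values are already determined, so $\mathcal{R}f$ is pinned down by duality, while the purely $L^{1}$ part is fixed by the weak-type continuity of the Calder\'on--Zygmund extension together with density of $L^{1}\cap L^{2}$ in $L^{1}$. I expect the main obstacle to be exactly these endpoints: because neither $L^{p,\infty}$ nor $L^{1}$ contains a dense subspace of smooth functions, one cannot define, bound, or identify $\mathcal{R}$ there by naive approximation, and every such step must instead be routed through the interpolation inclusion of property (ix) and through Lorentz duality against the separable predual.
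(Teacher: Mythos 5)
Your construction of $\mathcal{R}$ and your proof of the Lorentz bounds and the antisymmetry are sound, but they take a genuinely different route from the paper's. You import the classical Calder\'on--Zygmund theorem wholesale (strong $L^{r}$ bounds for every $r\in(1,\infty)$ together with the weak $(1,1)$ bound) and then transfer to Lorentz spaces via property (ix), using the inclusion $L^{p,\infty}\subseteq L^{p_{0}}+L^{p_{1}}$ to define $\mathcal{R}$ summandwise. The paper is deliberately self-contained: it proves only the weak $(1,1)$ estimate (by an explicit Calder\'on--Zygmund decomposition) and the $L^{2}$ multiplier identity; interpolation then gives $\mathcal{R}_{0}$ on $L^{p,q}$ only for $p\in(1,2)$, and the range $p\in(2,\infty)$ is reached not through strong Lebesgue bounds but through duality, setting $\langle\mathcal{R}_{0}^{*}f,g\rangle:=\langle f,\mathcal{R}_{0}g\rangle$ so that $\mathcal{R}_{0}^{*}$ maps $L^{p,\infty}\cong{\left(L^{p',1}\right)}^{*}$ to itself; the operator is then glued as $\mathcal{R}(f_{0}+f_{1}):=\mathcal{R}_{0}f_{0}-\mathcal{R}_{0}^{*}f_{1}$, well defined because $L^{p_{0},\infty}\cap L^{p_{1},\infty}\subseteq L^{2}$ (property (vii)), and interpolated a second time. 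With that construction the antisymmetry is essentially built in, whereas you must prove it a posteriori; your density argument for it (using that $q$ and $q'$ cannot both be infinite, that $\mathcal{R}g\in L^{p',1}$ for $g\in C_{c}^{\infty}$ by property (vii), and splitting $f$ through $L^{p_{0}}+L^{p_{1}}$) is correct. In effect you have hidden the duality step inside the black box, since the classical strong $L^{r}$ theory for $r>2$ is itself obtained by duality from $r<2$; your route is shorter, the paper's shows that weak $(1,1)$ plus $L^{2}$ are the only analytic inputs needed. One small repair: agreement of the $L^{1}$ and $L^{p_{0}}+L^{p_{1}}$ extensions on an overlap element $h$ is not literally ``unambiguous through the Schwartz definition'', since $h$ need not be approximable by Schwartz functions in both norms simultaneously; first truncate, noting $h\mathbf{1}_{|h|>1}\in L^{1}\cap L^{p_{0}}$ and $h\mathbf{1}_{|h|\leq1}\in L^{1}\cap L^{\infty}$, then approximate each piece.

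The genuine gap is the last clause of your uniqueness argument. On the scales $L^{p,q}$, $p\in(1,\infty)$, your argument is correct and in fact goes beyond the paper, whose proof is a pure construction that never addresses uniqueness: density disposes of $q<\infty$, and for $q=\infty$ the assumed antisymmetry determines $\mathcal{R}f$ through its pairings with the separable predual $L^{p',1}$, on which $\mathcal{R}$ is already determined. But for the ``purely $L^{1}$ part'' you invoke the weak-type continuity of the Calder\'on--Zygmund extension. Weak $(1,1)$ boundedness is a property of the operator you constructed; it is \emph{not} among the conditions listed in the lemma, so a competing linear map satisfying those conditions need not possess it, and density of $L^{1}\cap L^{2}$ in $L^{1}$ then yields nothing. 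Indeed the listed conditions constrain $\mathcal{R}$ only on $L^{2}$ and on the spaces $L^{p,q}$ with $p\in(1,\infty)$; a function such as $f={|x|}^{-n}{\left(\log(1/|x|)\right)}^{-2}\mathbf{1}_{|x|<1/2}\in L^{1}$ lies in no $L^{p,\infty}$ with $p>1$ and in no sum $L^{p_{0},\infty}+L^{p_{1},\infty}$ (its distribution function decays like $\lambda^{-1}{(\log\lambda)}^{-2}$), so on the complement of the span of the constrained spaces any candidate map can be altered linearly, via a Hamel basis, without violating any listed condition. Consequently the purely $L^{1}$ directions cannot be pinned down by your argument---nor by any argument from the stated properties alone; one must either add the weak $(1,1)$ bound to the characterizing properties or read the uniqueness assertion as uniqueness on the span of $L^{2}$ and the spaces $L^{p,q}$, $p\in(1,\infty)$, $q\in[1,\infty]$.
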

Assuming Lemma \ref{riesz-lemma}, we can define the Leray projection $\mathbb{P}_{ij}:=\delta_{ij}+\mathcal{R}_{i}\mathcal{R}_{j}$ onto divergence-free vector fields, which allows us to reduce the weak formulation of the linearised Navier-Stokes equations to a projected formulation, which is independent of the pressure $P$.
\begin{theorem}\label{projected-problem}
    Let $T\in(0,\infty]$, $p_{0},p,\tilde{p}\in(1,\infty)$, $f\in L^{p_{0},\infty}(\mathbb{R}^{n})$ with $\langle f_{i},\nabla_{i}\phi\rangle$ for all $\phi\in C_{c}^{\infty}(\mathbb{R}^{n})$, and $F\in\mathcal{L}_{\tilde{p},\infty}^{1;*}(T_{-})$. Then $u\in\mathcal{L}_{p,\infty}^{1;*}(T_{-})$ is a weak solution to the linearised Navier-Stokes equations if and only if $u$ satisfies the \textbf{\textit{projected formulation}}
    \begin{equation}
        \left\{\begin{array}{ll}
            \langle f_{i},\phi_{i}(0)\rangle + \int_{0}^{T}\left(\langle u_{i},\phi_{i}'+\Delta\phi_{i}\rangle + \langle F_{jk},\mathbb{P}_{ij}\nabla_{k}\phi_{i}\rangle\right)\,\mathrm{d}t = 0 & \forall\,\phi\in C_{c}^{\infty}\left([0,T)\times\mathbb{R}^{n}\right) \\
            \int_{0}^{T}\langle u_{i},\nabla_{i}\psi\rangle\,\mathrm{d}t = 0 & \forall\,\psi\in C_{c}^{\infty}\left((0,T)\times\mathbb{R}^{n}\right), \\
        \end{array}\right.
    \end{equation}
    where $\mathbb{P}_{ij}F_{jk}\in\mathcal{L}_{\tilde{p},\infty}^{1;*}(T_{-})$ by properties of the Riesz transform. Moreover, the function $P$ in the weak formulation is uniquely determined by $P=\mathcal{R}_{j}\mathcal{R}_{k}F_{jk}$.
\end{theorem}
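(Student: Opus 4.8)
The plan is to identify the pressure explicitly as $P_{0}:=\mathcal{R}_{j}\mathcal{R}_{k}F_{jk}$ and to reduce the whole equivalence to a single pointwise-in-time algebraic identity together with a Liouville-type uniqueness argument for the Poisson problem satisfied by the pressure. First I would record that $P_{0}\in\mathcal{L}_{\tilde{p},\infty}^{1;*}(T_{-})$, which is immediate from the $L^{\tilde{p},\infty}$-boundedness of $\mathcal{R}$ in Lemma \ref{riesz-lemma}, with the weak* measurability handled by Pettis' theorem exactly as for $\mathbb{P}_{ij}F_{jk}$.

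The core is the algebraic identity that, for a.e.\ $t$ and every $\phi\in C_{c}^{\infty}([0,T)\times\mathbb{R}^{n})$, $\langle F_{jk}(t),\mathbb{P}_{ij}\nabla_{k}\phi_{i}(t)\rangle=\langle F_{jk}(t),\nabla_{k}\phi_{j}(t)\rangle+\langle P_{0}(t),\nabla_{j}\phi_{j}(t)\rangle$. Expanding $\mathbb{P}_{ij}=\delta_{ij}+\mathcal{R}_{i}\mathcal{R}_{j}$ reduces this to $\langle F_{jk},\mathcal{R}_{i}\mathcal{R}_{j}\nabla_{k}\phi_{i}\rangle=\langle\mathcal{R}_{a}\mathcal{R}_{b}F_{ab},\nabla_{j}\phi_{j}\rangle$. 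I would prove this by moving the two Riesz transforms on the right onto the test function via the skew-adjointness $\langle\mathcal{R}f,g\rangle=-\langle f,\mathcal{R}g\rangle$ of Lemma \ref{riesz-lemma} (legitimate since $F(t)\in L^{\tilde{p},\infty}$ while the smooth, compactly supported test-derivatives lie in $L^{\tilde{p}',1}$), and then invoking the operator identities $\mathcal{R}_{i}\mathcal{R}_{j}=\mathcal{R}_{j}\mathcal{R}_{i}$ and $\nabla_{k}\mathcal{R}_{i}=\mathcal{R}_{k}\nabla_{i}$, which hold on $C_{c}^{\infty}$ because there $\mathcal{R}$ is the Fourier multiplier of Lemma \ref{riesz-lemma}; a one-line symbol check confirms that both sides carry the same Fourier multiplier on $\hat\phi_{i}$. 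Integrating the pointwise identity over $(0,T')$ (all pairings being locally integrable since $F,P_{0}\in\mathcal{L}^{1}$) shows that the projected formulation and the weak formulation with the specific pressure $P_{0}$ are literally the same equation, which at once yields the ``if'' direction.

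For the converse and the uniqueness of $P$, suppose $u$ is a weak solution with some pressure $P\in\mathcal{L}_{\tilde{p},\infty}^{1;*}(T_{-})$. I would test the weak formulation against gradient fields $\phi=\nabla\chi$ with $\chi\in C_{c}^{\infty}((0,T)\times\mathbb{R}^{n})$ supported away from $t=0$. The initial term vanishes since $\chi(0,\cdot)=0$; the term $\int_{0}^{T}\langle u_{j},\nabla_{j}(\chi'+\Delta\chi)\rangle\,\mathrm{d}t$ vanishes by the divergence-free condition on $u$ applied with $\psi=\chi'+\Delta\chi\in C_{c}^{\infty}((0,T)\times\mathbb{R}^{n})$; what remains is $\int_{0}^{T}(\langle F_{jk},\nabla_{j}\nabla_{k}\chi\rangle+\langle P,\Delta\chi\rangle)\,\mathrm{d}t=0$. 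Separating variables $\chi=\theta(t)\zeta(x)$ and ranging $\zeta$ over a countable dense set localizes this to the statement that, for a.e.\ $t$, $P(t)$ solves $\Delta P(t)=-\nabla_{j}\nabla_{k}F_{jk}(t)$ in the distributional sense. Since $P_{0}(t)=\mathcal{R}_{j}\mathcal{R}_{k}F_{jk}(t)$ solves the same equation (again by skew-adjointness and the symbol identity $\mathcal{R}_{j}\mathcal{R}_{k}\Delta=-\nabla_{j}\nabla_{k}$), the difference $P(t)-P_{0}(t)$ is a harmonic tempered distribution lying in $L^{\tilde{p},\infty}(\mathbb{R}^{n})$; its Fourier transform is supported at the origin, so it is a polynomial, and a nonzero polynomial cannot lie in $L^{\tilde{p},\infty}$ for $\tilde{p}<\infty$. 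Hence $P=P_{0}$ a.e., which proves the ``moreover'' clause and, via the identity of the previous paragraph, shows the projected formulation holds.

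I expect the main obstacle to be this pressure-recovery step: carefully passing from the space-time integral identity to the pointwise-in-$t$ distributional Poisson equation (managing the $\zeta$-dependent null sets by a density argument) and then running the Liouville argument in the weak-Lebesgue class $L^{\tilde{p},\infty}$. By contrast the algebraic identity is essentially bookkeeping with the skew-adjointness relation and the Fourier symbols of $\mathcal{R}$ and $\nabla$, the only genuine care being the Pettis/measurability justification of the pairings, which is already available from earlier in the paper.
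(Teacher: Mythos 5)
Your proposal is correct, and its skeleton coincides with the paper's proof: you identify the pressure explicitly as $P_{0}=\mathcal{R}_{j}\mathcal{R}_{k}F_{jk}$, use skew-adjointness of $\mathcal{R}$ together with the Fourier symbol identities to show that the projected formulation is literally the weak formulation with pressure $P_{0}$, and for the converse you test against gradient fields, exchange the ``for a.e.\ $t$, for all test functions'' quantifiers via a countable dense family, and finish with a Liouville-type argument. The only genuine divergence is in that last step, and it is worth comparing. The paper argues by real-variable means: mollification plus the mean value property (Weyl's lemma) shows that the difference of the two pressures is a.e.\ equal to a $C^{2}$ harmonic function; Liouville's theorem then says a non-constant harmonic function is unbounded, and feeding that unboundedness back through the mean value property shows the maximal function $P^{**}(\tau)$ is infinite for every $\tau$, which is incompatible with membership in any Lorentz space. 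You instead regard $P(t)-P_{0}(t)$ as a harmonic tempered distribution, so that $|\xi|^{2}$ times its Fourier transform vanishes, the Fourier transform is supported at the origin, the difference is a polynomial, and nonzero polynomials do not lie in $L^{\tilde{p},\infty}(\mathbb{R}^{n})$. Both routes work; yours is shorter if one grants the structure theorem for distributions supported at a point, but it silently uses two facts that each deserve a line: first, that $L^{\tilde{p},\infty}(\mathbb{R}^{n})\subset\mathcal{S}'(\mathbb{R}^{n})$ for $\tilde{p}\in(1,\infty)$, so that harmonicity tested against $C_{c}^{\infty}(\mathbb{R}^{n})$ upgrades to harmonicity in $\mathcal{S}'(\mathbb{R}^{n})$ by density of $C_{c}^{\infty}(\mathbb{R}^{n})$ in the Schwartz class; and second, that a nonzero polynomial $h$ satisfies $|\{|h|>y\}|=\infty$ for every $y>0$ (for nonconstant $h$ this follows, e.g., by Fubini along a direction $v$ in which the top-degree homogeneous part of $h$ does not vanish, since then $|h|\rightarrow\infty$ along every line parallel to $v$). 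The paper's maximal-function computation is precisely its substitute for this second fact, and has the advantage of staying entirely inside the real-variable framework already developed for Lorentz spaces; your version buys brevity at the cost of importing tempered-distribution machinery.
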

\begin{proof}
    Given $F\in\mathcal{L}_{\tilde{p},\infty}^{1;*}(T_{-})$, the expression $P:=\mathcal{R}_{j}\mathcal{R}_{k}F_{jk}$ defines a weakly* measurable function $P\in\mathcal{L}_{\tilde{p},\infty}^{1;*}(T_{-})$ satisfying
    \begin{equation}
        \langle P(t),\nabla_{i}\psi_{i}\rangle = \langle F_{jk}(t),\mathcal{R}_{j}\mathcal{R}_{k}\nabla_{i}\psi_{i}\rangle = \langle F_{jk},\mathcal{R}_{i}\mathcal{R}_{j}\nabla_{k}\psi_{i}\rangle \quad \text{for all }\psi\in C_{c}^{\infty}(\mathbb{R}^{n})
    \end{equation}
    for almost every $t\in(0,T)$, so if $u$ solves the projected problem then $u$ is a weak solution.

    Conversely, by considering $\phi_{j}(t,x)=\theta(t)\nabla_{j}\psi(x)$, where $\psi\in C_{c}^{\infty}(\mathbb{R}^{n})$ is fixed and $\theta\in C_{c}^{\infty}((0,T))$ is allowed to vary, if $u$ is a weak solution with pressure $P$ then
    \begin{equation}
        0=\langle F_{jk}(t),\nabla_{j}\nabla_{k}\psi\rangle + \langle P(t),\Delta\psi\rangle=-\langle F_{jk}(t),\mathcal{R}_{j}\mathcal{R}_{k}\Delta\psi\rangle+\langle P(t),\Delta\psi\rangle \quad \text{for a.e.\ }t\in(0,T)
    \end{equation}
    for all $\psi\in C_{c}^{\infty}(\mathbb{R}^{n})$, so $\langle P(t),\chi\rangle=\langle\mathcal{R}_{j}\mathcal{R}_{k}F_{jk}(t),\chi\rangle$ for almost every $t\in(0,T)$ for all $\chi\in\Delta C_{c}^{\infty}(\mathbb{R}^{n})$. By considering a countable dense subset of $\left(\Delta C_{c}^{\infty}(\mathbb{R}^{n}),{\|\cdot\|}_{L^{\tilde{p}',1}(\mathbb{R}^{n})}\right)$, it follows that $\langle P(t),\chi\rangle=\langle\mathcal{R}_{j}\mathcal{R}_{k}F_{jk}(t),\chi\rangle$ for all $\chi\in\Delta C_{c}^{\infty}(\mathbb{R}^{n})$ for almost every $t\in(0,T)$. To conclude that $P(t)=\mathcal{R}_{j}\mathcal{R}_{k}F_{jk}(t)$ for almost every $t\in(0,T)$, we make use of the following lemma:

    {\em If $P\in L_{\mathrm{loc}}^{1}(\mathbb{R}^{n})$ satisfies $\langle P,\Delta\psi\rangle$ for all $\psi\in C_{c}^{\infty}(\mathbb{R}^{n})$, then $P$ is almost everywhere equal to some $\tilde{P}\in C^{2}(\mathbb{R}^{n})$ satisfying $\Delta\tilde{P}=0$, so if $P\in L^{\tilde{p},\infty}(\mathbb{R}^{n})$ for $\tilde{p}\in(1,\infty)$ then $P=0$ almost everywhere.}

    {\em Proof of lemma.} Let $\eta_{\epsilon}(x)=\epsilon^{-n}\eta(x/\epsilon)$ for some $\eta\in C_{c}^{\infty}(\mathbb{R}^{n})$ with $\int_{\mathbb{R}^{n}}\eta(x)\,\mathrm{d}x=1$. Then $P_{\epsilon}:=P*\eta_{\epsilon}$ is a smooth function satisfying $\langle\Delta P_{\epsilon},\psi\rangle$ for all $\psi\in C_{c}^{\infty}(\mathbb{R}^{n})$, so $\Delta P_{\epsilon}=0$. By the mean value property (\cite{gilbarg1983}, Theorem 2.7), we therefore have $P_{\epsilon}(x)=\frac{1}{|B(x,R)|}\int_{B(x,R)}P_{\epsilon}(y)\,\mathrm{d}y$ for all $x\in\mathbb{R}^{n}$ and $R\in(0,\infty)$. By continuity of translation in $L^{1}(\mathbb{R}^{n})$ we have
    \begin{equation}
    \begin{aligned}
        \left|\int_{B(x,R)}(P_{\epsilon}(y)-P(y))\,\mathrm{d}y\right| &= \left|\int_{B(x,R)}\left(\int_{\mathbb{R}^{n}}(P(y-z)-P(y))\eta_{\epsilon}(z)\,\mathrm{d}z\right)\,\mathrm{d}y\right| \\
        &\leq \int_{\mathbb{R}^{n}}\left(\int_{B(x,R)}\left|P(y-z)-P(y)\right|\,\mathrm{d}y\right)|\eta_{\epsilon}(z)|\,\mathrm{d}z
        &\overset{\epsilon\rightarrow0}{\rightarrow}0
    \end{aligned}
    \end{equation}
    locally uniformly in $x\in\mathbb{R}^{n}$. Therefore, as $\epsilon\rightarrow 0$, $P_{\epsilon}$ converges to $P$ in $L_{\text{loc}}^{1}(\mathbb{R}^{n})$, and $P_{\epsilon}$ converges locally uniformly to a continuous function $\tilde{P}$ satisfying $\tilde{P}(x)=\frac{1}{|B(x,R)|}\int_{B(x,R)}P(y)\,\mathrm{d}y$. Then $P=\tilde{P}$ almost everywhere, so $\tilde{P}(x)=\frac{1}{|B(x,R)|}\int_{B(x,R)}\tilde{P}(y)\,\mathrm{d}y$, and by the mean value property we deduce that $\tilde{P}\in C^{2}(\mathbb{R}^{n})$ with $\Delta\tilde{P}=0$. By Liouville's theorem, if $\tilde{P}$ is not constant then it is unbounded, so for any $R>0$ we can choose $x\in\mathbb{R}^{n}$ to make $\tilde{P}(x)=\frac{1}{|B(x,R)|}\int_{B(x,R)}P(y)\,\mathrm{d}y$ arbitrarily large, which implies that the maximal function $P^{**}(\tau)=\sup_{|A|\geq\tau}\frac{1}{|A|}\int_{A}|P(y)|\,\mathrm{d}y$ is infinite for all $\tau\in(0,\infty)$, so $P$ cannot belong to a Lorentz space.
\end{proof}
For the sake of completeness, we give a construction of the Riesz transform. For $\epsilon\in(0,\infty)$ and $n\geq 2$ (a simplified version of the argument exists for $n=1$), we define the \textbf{\textit{truncated Riesz transform}}
\begin{equation}
    \mathcal{R}_{\epsilon}f(x) := \frac{1}{\pi\omega_{n-1}}\int_{|y|>\epsilon}\frac{y}{{|y|}^{n+1}}f(x-y)\,\mathrm{d}y \quad \text{for }f\in L^{2}(\mathbb{R}^{n}),\,x\in\mathbb{R}^{n},
\end{equation}
where $\omega_{n-1}$ is the volume of the unit ball in $\mathbb{R}^{n-1}$. By H\"{o}lder's inequality and continuity of translation in $L^{2}(\mathbb{R}^{n})$, $\mathcal{R}_{\epsilon}$ maps $L^{2}(\mathbb{R}^{n})$ to the space $C_{b,u}^{0}(\mathbb{R}^{n})$ of bounded, uniformly continuous functions. Then $\langle\mathcal{R}_{\epsilon}f,g\rangle=-\langle f,\mathcal{R}_{\epsilon}g\rangle$ for all $f,g\in L^{1}(\mathbb{R}^{n})\cap L^{2}(\mathbb{R}^{n})$ by Fubini's theorem. For $f\in L^{1}(\mathbb{R}^{n})\cap\mathcal{F}L^{1}(\mathbb{R}^{n})$ and $0<\epsilon<R<\infty$, we use Fourier inversion and Fubini's theorem to write
\begin{equation}
    \int_{\epsilon<|y|<R}\frac{y}{{|y|}^{n+1}}f(x-y)\,\mathrm{d}y = \frac{1}{{(2\pi)}^{n}}\int_{\mathbb{R}^{n}}e^{\mathrm{i}x\cdot\xi}\left(\int_{\epsilon<|y|<R}e^{-\mathrm{i}y\cdot\xi}\frac{y}{{|y|}^{n+1}}\,\mathrm{d}y\right)\mathcal{F}f(\xi)\,\mathrm{d}\xi,
\end{equation}
where we use polar coordinates to compute
\begin{equation}
    \int_{\epsilon<|y|<R}e^{-\mathrm{i}y\cdot\xi}\frac{y}{{|y|}^{n+1}}\,\mathrm{d}y = -2\mathrm{i}\left|\mathbb{S}^{n-2}\right|\frac{\xi}{|\xi|}\int_{0}^{\frac{\pi}{2}}\left(\int_{|\xi|\epsilon\cos\theta}^{|\xi|R\cos\theta}\frac{\sin\rho}{\rho}\,\mathrm{d}\rho\right)\cos\theta\sin^{n-2}\theta\,\mathrm{d}\theta.
\end{equation}
Since the improper integral $\int_{0}^{\infty}\frac{\sin\rho}{\rho}\,\mathrm{d}\rho=\frac{\pi}{2}$ exists, we have that $\int_{a}^{b}\frac{\sin\rho}{\rho}\,\mathrm{d}\rho$ defines a bounded continuous function on $\{(a,b)\text{ : }0\leq a\leq b\leq\infty\}$. By dominated convergence, and using the identity $\left|\mathbb{S}^{n-2}\right|\int_{0}^{\frac{\pi}{2}}\cos\theta\sin^{n-2}\theta\,\mathrm{d}\theta=\frac{1}{n-1}\left|\mathbb{S}^{n-2}\right|=\omega_{n-1}$, we deduce that
\begin{equation}
    \mathcal{R}_{\epsilon}f(x) = \frac{1}{{(2\pi)}^{n}}\int_{\mathbb{R}^{n}}e^{\mathrm{i}x\cdot\xi}\frac{-\mathrm{i}\xi}{|\xi|}\lambda_{n}(\epsilon|\xi|)\mathcal{F}f(\xi)\,\mathrm{d}\xi \quad \text{for }f\in L^{1}(\mathbb{R}^{n})\cap\mathcal{F}L^{1}(\mathbb{R}^{n}),\,x\in\mathbb{R}^{n},
\end{equation}
where $\lambda_{n}:[0,\infty)\rightarrow\mathbb{R}$ is a bounded continuous function with $\lambda_{n}(0)=1$.

Now $L^{1}(\mathbb{R}^{n})\cap\mathcal{F}L^{1}(\mathbb{R}^{n})$ is dense in $L^{2}(\mathbb{R}^{n})$, so we deduce that $\mathcal{R}_{\epsilon}f=\mathcal{F}^{-1}\left[\xi\mapsto\frac{-\mathrm{i}\xi}{|\xi|}\lambda_{n}(\epsilon|\xi|)\mathcal{F}f(\xi)\right]$ for all $f\in L^{2}(\mathbb{R}^{n})$. By dominated convergence and continuity of $\mathcal{F}^{-1}:L^{2}(\mathbb{R}^{n})\rightarrow L^{2}(\mathbb{R}^{n})$, it follows that $\mathcal{R}_{\epsilon}f\rightarrow\mathcal{R}_{0}f$ in $L^{2}(\mathbb{R}^{n})$ as $\epsilon\rightarrow0$, where the \textbf{\textit{L\textsuperscript{2} Riesz transform}} is defined by $\mathcal{R}_{0}f:=\mathcal{F}^{-1}\left[\xi\mapsto\frac{-\mathrm{i}\xi}{|\xi|}\mathcal{F}f(\xi)\right]$ for $f\in L^{2}(\mathbb{R}^{n})$. By density of $L^{1}(\mathbb{R}^{n})\cap L^{2}(\mathbb{R}^{n})$ in $L^{2}(\mathbb{R}^{n})$, we have $\langle\mathcal{R}_{0}f,g\rangle=-\langle f,\mathcal{R}_{0}g\rangle$ for all $f,g\in L^{2}(\mathbb{R}^{n})$.

If we can establish the estimate
\begin{equation}\label{calderon-zygmund}
    |\{x\in\mathbb{R}^{n}\text{ : }|\mathcal{R}_{0}f(x)|>\alpha\}| \lesssim_{n} \frac{{\|f\|}_{L^{1}(\mathbb{R}^{n})}}{\alpha} \quad \text{for all }f\in L^{1}(\mathbb{R}^{n})\cap L^{2}(\mathbb{R}^{n}) \text{ and }\alpha\in(0,\infty),
\end{equation}
then the $L^{2}$ Riesz transform extends by density to $\mathcal{R}_{0}:L^{1}(\mathbb{R}^{n})\rightarrow L^{1,\infty}(\mathbb{R}^{n})$. By interpolation (property (ix) from our discussion of Lorentz spaces), it follows that $\mathcal{R}_{0}:L^{p,q}(\mathbb{R}^{n})\rightarrow L^{p,q}(\mathbb{R}^{n})$ for $p\in(1,2)$ and $q\in[1,\infty]$, so by duality the expression $\langle\mathcal{R}_{0}^{*}f,g\rangle:=\langle f,\mathcal{R}_{0}g\rangle$ defines $\mathcal{R}_{0}^{*}:L^{p,\infty}(\mathbb{R}^{n})\rightarrow L^{p,\infty}(\mathbb{R}^{n})$ for $p\in(2,\infty)$. For $p_{0}\in(1,2)$ and $p_{1}\in(2,\infty)$, we have $\mathcal{R}_{0}:L^{p_{0},\infty}(\mathbb{R}^{n})\rightarrow L^{p_{0},\infty}(\mathbb{R}^{n})$ and $\mathcal{R}_{0}^{*}:L^{p_{1},\infty}(\mathbb{R}^{n})\rightarrow L^{p_{1},\infty}(\mathbb{R}^{n})$, with $\mathcal{R}_{0}=-\mathcal{R}_{0}^{*}$ on $L^{p_{0},\infty}(\mathbb{R}^{n})\cap L^{p_{1},\infty}(\mathbb{R}^{n})\subseteq L^{2}(\mathbb{R}^{n})$ (this inclusion follows from property (vii) from our discussion of Lorentz spaces), so the expression $\mathcal{R}(f_{0}+f_{1}):=\mathcal{R}_{0}f_{0}-\mathcal{R}_{0}^{*}f_{1}$ defines a well-defined operator $\mathcal{R}:L^{p_{0},\infty}(\mathbb{R}^{n})+L^{p_{1},\infty}(\mathbb{R}^{n})\rightarrow L^{p_{0},\infty}(\mathbb{R}^{n})+L^{p_{1},\infty}(\mathbb{R}^{n})$ which extends $\mathcal{R}_{0}$ and $-\mathcal{R}_{0}^{*}$. By interpolation, we have $\mathcal{R}:L^{p,q}(\mathbb{R}^{n})\rightarrow L^{p,q}(\mathbb{R}^{n})$ for $p\in(1,\infty)$ and $q\in[1,\infty]$. The operator $\mathcal{R}$ is called the \textbf{\textit{Riesz transform}}, and satisfies $\langle\mathcal{R}f,g\rangle=-\langle f,\mathcal{R}g\rangle$ for $f\in L^{p,q}(\mathbb{R}^{n})$ and $g\in L^{p',q'}(\mathbb{R}^{n})$.

We now prove (\ref{calderon-zygmund}), arguing along the lines of (\cite{stein1970}, sections II.2.4.1, II.2.4.2 and II.3.1). Let $f\in L^{1}(\mathbb{R}^{n})\cap L^{2}(\mathbb{R}^{n})$ and $\alpha\in(0,\infty)$. Since $f\in L^{1}(\mathbb{R}^{n})$, we can use the Calder\'{o}n-Zygmund cube decomposition (\cite{stein1970}, Theorem I.4) to construct a countable collection $\{Q_{k}\}$ of closed hypercubes in $\mathbb{R}^{n}$ (with sides parallel to the coordinate axes) with disjoint interiors such that
\begin{equation}\label{CZ1}
    |f(x)| \leq \alpha \quad \text{for almost every } x\in\mathbb{R}^{n}\setminus\left(\cup_{k}Q_{k}\right),
\end{equation}
\begin{equation}\label{CZ2}
    \alpha < \frac{1}{|Q_{k}|}\int_{Q_{k}}|f(x)|\,\mathrm{d}x \leq 2^{n}\alpha \quad \text{for each hypercube }Q_{k}.
\end{equation}
Writing $\Omega=\cup_{k}Q_{k}$, we use (\ref{CZ2}) to estimate
\begin{equation}\label{CZ3}
    |\Omega| = \sum_{k}|Q_{k}| \leq \sum_{k}\frac{1}{\alpha}\int_{Q_{k}}|f(x)|\,\mathrm{d}x \leq \frac{{\|f\|}_{L^{1}(\mathbb{R}^{n})}}{\alpha}.
\end{equation}
We write $f=g+b$, where
\begin{equation}
    g(x) := \left\{\begin{array}{ll}f(x) & \text{if }x\in\mathbb{R}^{n}\setminus\Omega, \\ \frac{1}{|Q_{k}|}\int_{Q_{k}}f(y)\,\mathrm{d}y & \text{if } x\in\mathrm{int}(Q_{k}). \end{array}\right.
\end{equation}
Here the ``good'' part $g$ is bounded by estimates (\ref{CZ1}) and (\ref{CZ2}), while the ``bad'' part $b$ is supported on the set $\Omega$ of bounded measure (\ref{CZ3}), with $b$ having zero average on each hypercube $Q_{k}$. For $p\in\{1,2\}$ we have
\begin{equation}\label{g-Lp}
\begin{aligned}
    {\|g\|}_{L^{p}(\mathbb{R}^{n})}^{p} &= \int_{\mathbb{R}^{n}\setminus\Omega}{|g(x)|}^{p}\,\mathrm{d}x + \int_{\Omega}{|g(x)|}^{p}\,\mathrm{d}x \\
    &\leq \int_{\mathbb{R}^{n}\setminus\Omega}\alpha^{p-1}|f(x)|\,\mathrm{d}x + |\Omega|2^{np}\alpha^{p} \\
    &\leq (1+2^{np})\alpha^{p-1}{\|f\|}_{L^{1}(\mathbb{R}^{n})}.
\end{aligned}
\end{equation}
Therefore $g$ and $b=f-g$ belong to $L^{2}(\mathbb{R}^{n})$, so we can write $\mathcal{R}_{0}f=\mathcal{R}_{0}g+\mathcal{R}_{0}b$ and we have
\begin{equation}
    |\{x\in\mathbb{R}^{n}\text{ }\mathrm{:}\text{ }|\mathcal{R}_{0}f(x)|>\alpha\}| \leq |\{x\in\mathbb{R}^{n}\text{ }\mathrm{:}\text{ }|\mathcal{R}_{0}g(x)|>\alpha/2\}| + |\{x\in\mathbb{R}^{n}\text{ }\mathrm{:}\text{ }|\mathcal{R}_{0}b(x)|>\alpha/2\}|.
\end{equation}
By Chebychev's inequality and (\ref{g-Lp}) with $p=2$, we have
\begin{equation}
    |\{x\in\mathbb{R}^{n}\text{ }\mathrm{:}\text{ }|\mathcal{R}_{0}g(x)|>\alpha/2\}| \leq \frac{4{\|\mathcal{R}_{0}g\|}_{L^{2}(\mathbb{R}^{n})}^{2}}{\alpha^{2}} = \frac{4{\|g\|}_{L^{2}(\mathbb{R}^{n})}^{2}}{\alpha^{2}} \leq \frac{4(1+2^{2n}){\|f\|}_{L^{1}(\mathbb{R}^{n})}}{\alpha}.
\end{equation}
To estimate $|\{x\in\mathbb{R}^{n}\text{ }\mathrm{:}\text{ }|\mathcal{R}_{0}b(x)|>\alpha/2\}|$, we make the following definition: if the hypercube $Q_{k}$ has centre $y^{(k)}$ and side length $r$, then we let $Q_{k}^{*}$ be the hypercube with centre $y^{(k)}$ and side length $2\sqrt{n}r$. The scaling factor $2\sqrt{n}$ is chosen to ensure that
\begin{equation}\label{scaling}
    |x-y^{(k)}|\geq2|y-y^{(k)}| \quad \text{for all }x\in\mathbb{R}^{n}\setminus Q_{k}^{*}\text{ and }y\in Q_{k}.
\end{equation}
Writing $\Omega^{*}=\cup_{k}Q_{k}^{*}$, we have
\begin{equation}
    |\Omega^{*}| \leq \sum_{k}|Q_{k}^{*}| \lesssim_{n} \sum_{k}|Q_{k}| \leq \frac{{\|f\|}_{L^{1}(\mathbb{R}^{n})}}{\alpha},
\end{equation}
so it remains for us to estimate $|\{x\in\mathbb{R}^{n}\setminus\Omega^{*}\text{ : }|\mathcal{R}_{0}b(x)|>\alpha/2\}|$. Writing $b_{k}(x)=\mathbf{1}_{Q_{k}}(x)b(x)$, we note that the series $b=\sum_{k}b_{k}$ converges absolutely in $L^{2}(\mathbb{R}^{n})$, so the series $\mathcal{R}_{0}b=\sum_{k}\mathcal{R}_{0}b_{k}$ converges absolutely in $L^{2}(\mathbb{R}^{n})$, so $\mathcal{R}_{0}b(x)=\sum_{k}\mathcal{R}_{0}b_{k}(x)$ for almost every $x\in\mathbb{R}^{n}$. By our analysis of truncated Riesz transforms, we have $\mathcal{R}_{0}b_{k}=\lim_{\epsilon\rightarrow0}\mathcal{R}_{\epsilon}b_{k}$ in $L^{2}(\mathbb{R}^{n})$. For all $x\in\mathbb{R}^{n}\setminus Q_{k}^{*}$ we have
\begin{equation}
    \mathcal{R}_{\epsilon}b_{k}(x) := \frac{1}{\pi\omega_{n-1}}\int_{y\in Q_{k},\,|x-y|>\epsilon}\frac{x-y}{{|x-y|}^{n+1}}b(y)\,\mathrm{d}y \overset{\epsilon\rightarrow0}{\rightarrow} \frac{1}{\pi\omega_{n-1}}\int_{Q_{k}}\frac{x-y}{{|x-y|}^{n+1}}b(y)\,\mathrm{d}y.
\end{equation}
Writing $K(z)=\frac{1}{\pi\omega_{n-1}}\frac{z}{{|z|}^{n+1}}$, for almost every $x\in\mathbb{R}^{n}\setminus Q_{k}^{*}$ we deduce that
\begin{equation}
    \mathcal{R}_{0}b_{k}(x)=\int_{Q_{k}}K(x-y)b(y)\,\mathrm{d}y=\int_{Q_{k}}\left(K(x-y)-K(x-y^{(k)})\right)b(y)\,\mathrm{d}y,
\end{equation}
so for almost every $x\in\mathbb{R}^{n}\setminus\Omega^{*}$ we have
\begin{equation}
    \mathcal{R}_{0}b(x) = \sum_{k}\int_{Q_{k}}\left(K(x-y)-K(x-y^{(k)})\right)b(y)\,\mathrm{d}y.
\end{equation}
Therefore
\begin{equation}
\begin{aligned}
    \int_{\mathbb{R}^{n}\setminus\Omega^{*}}|\mathcal{R}_{0}b(x)|\,\mathrm{d}x &\leq \sum_{k}\int_{\mathbb{R}^{n}\setminus Q_{k}^{*}}\left(\int_{Q_{k}}\left|K(x-y)-K(x-y^{(k)})\right||b(y)|\,\mathrm{d}y\right)\,\mathrm{d}x \\
    &\leq \sum_{k}\int_{Q_{k}}\left(\int_{|x-y^{(k)}|\geq2|y-y^{(k)}|}\left|K(x-y)-K(x-y^{(k)})\right|\,\mathrm{d}x\right)|b(y)|\,\mathrm{d}y \\
    &= \sum_{k}\int_{Q_{k}}\left(\int_{|\tilde{x}|\geq2|\tilde{y}|}\left|K(\tilde{x}-\tilde{y})-K(\tilde{x})\right|\,\mathrm{d}\tilde{x}\right)|b(y)|\,\mathrm{d}y \\
    &\leq \sum_{k}\int_{Q_{k}}\left(\int_{|\tilde{x}|\geq2|\tilde{y}|}|\tilde{y}|\sup_{|z|\geq|\tilde{x}|/2}|\nabla K(z)|\,\mathrm{d}\tilde{x}\right)|b(y)|\,\mathrm{d}y \\
    &\lesssim_{n} \sum_{k}\int_{Q_{k}}\left(\int_{|\tilde{x}|\geq2|\tilde{y}|}|\tilde{y}|{|\tilde{x}|}^{-(n+1)}\,\mathrm{d}\tilde{x}\right)|b(y)|\,\mathrm{d}y \\
    &\lesssim_{n} \sum_{k}\int_{Q_{k}}|b(y)|\,\mathrm{d}y \\
    &\lesssim_{n} {\|f\|}_{L^{1}(\mathbb{R}^{n})},
\end{aligned}
\end{equation}
where we use (\ref{scaling}) in the second line, the mean value theorem in the fourth line, and (\ref{g-Lp}) with $p=1$ in the last line. By Chebychev's inequality, we conclude that
\begin{equation}
    |\{x\in\mathbb{R}^{n}\setminus\Omega^{*}\text{ : }|\mathcal{R}_{0}b(x)|>\alpha/2\}| \leq \frac{2}{\alpha}\int_{\mathbb{R}^{n}\setminus\Omega^{*}}|\mathcal{R}_{0}b(x)|\,\mathrm{d}x \lesssim_{n} \frac{{\|f\|}_{L^{1}(\mathbb{R}^{n})}}{\alpha}.
\end{equation}
\subsection{The heat kernel and the mild formulation}
The heat kernel $\Phi$ is defined by
\begin{equation}
    \Phi(t,x) := \frac{1}{{(4\pi t)}^{n/2}}e^{-{|x|}^{2}/4t} = \frac{1}{{(2\pi)}^{n}}\int_{\mathbb{R}^{n}}e^{\mathrm{i}x\cdot\xi-t{|\xi|}^{2}}\,\mathrm{d}\xi \quad \text{for }(t,x)\in(0,\infty)\times\mathbb{R}^{n}.
\end{equation}
One easily verifies that $\Phi$ is a smooth solution to the heat equation $\Phi'=\Delta\Phi$ on $(0,\infty)\times\mathbb{R}^{n}$. The heat kernel also satisfies ${\|\nabla^{\alpha}\Phi(t)\|}_{L^{p',1}(\mathbb{R}^{n})}^{*}\lesssim_{\alpha,n}t^{-\frac{1}{2}\left(|\alpha|+\frac{n}{p}\right)}$ and $\nabla^{\alpha}\Phi(s+t)=\Phi(s)*\nabla^{\alpha}\Phi(t)$ for every $\alpha\in\mathbb{Z}_{\geq0}^{n}$, $p\in(1,\infty]$ and $s,t\in(0,\infty)$ (these well-known results are discussed in \cite{davies2021}).

If $p\in(1,\infty]$ and $f\in L^{p,\infty}(\mathbb{R}^{n})$, then $e^{t\Delta}f(x):=\int_{\mathbb{R}^{n}}\Phi(t,x-y)f(y)\,\mathrm{d}y$ defines a smooth function on $(0,\infty)\times\mathbb{R}^{n}$, satisfying $\nabla^{\alpha}e^{t\Delta}f(x)=\int_{\mathbb{R}^{n}}\nabla^{\alpha}\Phi(t,x-y)f(y)\,\mathrm{d}y$, $\left|\nabla^{\alpha}e^{t\Delta}f(x)\right|\lesssim_{\alpha,n}p't^{-\frac{1}{2}\left(|\alpha|+\frac{n}{p}\right)}{\|f\|}_{L^{p,\infty}(\mathbb{R}^{n})}^{*}$ and ${\|\nabla^{\alpha}e^{t\Delta}f\|}_{L^{p,\infty}(\mathbb{R}^{n})}\lesssim_{\alpha,n} t^{-|\alpha|/2}{\|f\|}_{L^{p,\infty}(\mathbb{R}^{n})}$. The identities $\int_{\mathbb{R}^{n}}\Phi(t,x)\,\mathrm{d}x=1$ and $\Phi(t,x)=t^{-\frac{n}{2}}\Phi(1,\frac{x}{\sqrt{t}})$ will allow us to use approximation of identity results (property (viii) of Lorentz spaces) in the limit $t\rightarrow0$.

If $T\in(0,\infty]$, $p\in(1,\infty]$, and $G:(0,T)\rightarrow L^{p,\infty}(\mathbb{R}^{n})$ is a weakly* measurable function, then $e^{t\Delta}G(s,x):=[e^{t\Delta}G(s)](x)$ satisfies $\nabla^{\alpha}e^{t\Delta}G(s,x)=\langle G(s),\tilde{\Phi}_{\alpha}(t,x)\rangle$, where the expression $\tilde{\Phi}_{\alpha}(t,x;y):=\nabla^{\alpha}\Phi(t,x-y)$ defines a weakly* measurable function $\tilde{\Phi}_{\alpha}:(0,\infty)\times\mathbb{R}^{n}\rightarrow L^{p',1}(\mathbb{R}^{n})$. Approximating $\tilde{\Phi}_{\alpha}$ using Pettis' theorem, it follows that the function $(s,t,x)\mapsto\nabla^{\alpha}e^{t\Delta}G(s,x)$ is measurable. In particular, if $T\in(0,\infty]$, $p\in(1,\infty]$, $\alpha\in\mathbb{Z}_{\geq0}^{n}$, $|\alpha|\leq1$, and $G\in\mathcal{L}_{p,\infty}^{1;*}(T_{-})$, then the expression $\int_{0}^{t}\nabla^{\alpha}e^{(t-s)\Delta}G(s,x)\,\mathrm{d}s$ defines an element of $\mathcal{L}_{p,\infty}^{1}(T_{-})$, satisfying
\begin{equation}
\begin{aligned}
    \int_{0}^{T'}{\left\|\int_{0}^{t}\nabla^{\alpha}e^{(t-s)\Delta}G(s,\cdot)\,\mathrm{d}s\right\|}_{L^{p,\infty}(\mathbb{R}^{n})}\,\mathrm{d}t &\leq \int_{0}^{T'}\left(\int_{0}^{t}{\left\|\nabla^{\alpha}e^{(t-s)\Delta}G(s)\right\|}_{L^{p,\infty}(\mathbb{R}^{n})}\,\mathrm{d}s\right)\,\mathrm{d}t \\
    &\lesssim_{\alpha,n,p} \int_{0}^{T'}\left(\int_{0}^{t}{(t-s)}^{-|\alpha|/2}{\|G(s)\|}_{{\left(L^{p',1}(\mathbb{R}^{n})\right)}^{*}}\,\mathrm{d}s\right)\,\mathrm{d}t \\
    &= \int_{0}^{T'}\left(\int_{s}^{T'}{(t-s)}^{-|\alpha|/2}\,\mathrm{d}t\right){\|G(s)\|}_{{\left(L^{p',1}(\mathbb{R}^{n})\right)}^{*}}\,\mathrm{d}s
\end{aligned}
\end{equation}
for every $T'\in(0,T)$, where we use Minkowski's inequality (property (iv) of Lorentz space) in the first line, the inequality ${\|\nabla^{\alpha}e^{t\Delta}f\|}_{L^{p,\infty}(\mathbb{R}^{n})}\lesssim_{\alpha,n} t^{-|\alpha|/2}{\|f\|}_{L^{p,\infty}(\mathbb{R}^{n})}$ in the second, and Fubini's theorem in the third (having established measurability of ${\|G(s)\|}_{{\left(L^{p',1}(\mathbb{R}^{n})\right)}^{*}}$ in Lemma \ref{measurable-norms}).

We are now in a position to define the \textbf{\textit{mild formulation}} of the linearised Navier-Stokes equations. For $T\in(0,\infty]$, $p_{0},\tilde{p}\in(1,\infty)$, $f\in L^{p_{0},\infty}(\mathbb{R}^{n})$ satisfying $\langle f_{i},\nabla_{i}\phi\rangle$ for all $\phi\in C_{c}^{\infty}(\mathbb{R}^{n})$, and $F\in\mathcal{L}_{\tilde{p},\infty}^{1;*}(T_{-})$, the expression
\begin{equation}
    v_{i}(t,x) := e^{t\Delta}f_{i}(x) - \int_{0}^{t}\nabla_{k}e^{(t-s)\Delta}\mathbb{P}_{ij}F_{jk}(s,x)\,\mathrm{d}s \quad \text{for a.e.\ }(t,x)\in(0,T)\times\mathbb{R}^{n}
\end{equation}
defines an element $v\in\mathcal{L}_{p_{0},\infty}^{\infty}(T)+\mathcal{L}_{\tilde{p},\infty}^{1}(T_{-})$, which we refer to as the \textbf{\textit{mild solution}}.
\begin{theorem}\label{fjrgen}
    Let $T\in(0,\infty]$, $p_{0},p,\tilde{p}\in(1,\infty)$, $f\in L^{p_{0},\infty}(\mathbb{R}^{n})$ with $\langle f_{i},\nabla_{i}\phi\rangle$ for all $\phi\in C_{c}^{\infty}(\mathbb{R}^{n})$, and $F\in\mathcal{L}_{\tilde{p},\infty}^{1;*}(T_{-})$. Then $u\in\mathcal{L}_{p,\infty}^{1;*}(T_{-})$ satisfies the projected formulation of the linearised Navier-Stokes equations if and only if $[u(t)](x)=v(t,x)$ for almost every $(t,x)\in(0,T)\times\mathbb{R}^{n}$, where $v$ is the mild solution.
\end{theorem}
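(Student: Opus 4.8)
The plan is to prove the two implications separately, in each case exploiting that $e^{t\Delta}$ is a self-adjoint semigroup generated by $\Delta$, so $\partial_t e^{t\Delta}=\Delta e^{t\Delta}$, and that $\mathbb{P}_{ij}=\delta_{ij}+\mathcal{R}_i\mathcal{R}_j$ is self-adjoint (since $\langle\mathcal{R}f,g\rangle=-\langle f,\mathcal{R}g\rangle$) and commutes with $e^{t\Delta}$ and $\nabla$. For the ``if'' direction I would substitute the mild solution $v$ into the left-hand side of \eqref{projform} with an arbitrary $\phi\in C_c^\infty([0,T)\times\mathbb{R}^n)$, splitting $v_i=e^{t\Delta}f_i+D_i(t)$ with $D_i(t)=-\int_0^t\nabla_k e^{(t-s)\Delta}\mathbb{P}_{ij}F_{jk}(s)\,\mathrm{d}s$. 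For the linear part, integrating $\langle e^{t\Delta}f_i,\phi_i'(t)\rangle$ by parts in time and using $\partial_t e^{t\Delta}f_i=\Delta e^{t\Delta}f_i$ together with self-adjointness of $e^{t\Delta}$, the two Laplacian contributions in $\langle e^{t\Delta}f_i,\phi_i'(t)+\Delta\phi_i(t)\rangle$ cancel and only the boundary term $-\langle f_i,\phi_i(0)\rangle$ survives, the term at $t=T$ vanishing since $\phi$ is compactly supported in $[0,T)$. For the forcing part I would use Fubini --- justified by the integrability estimate for $\int_0^{T'}\|\int_0^t\nabla_k e^{(t-s)\Delta}\mathbb{P}_{ij}F_{jk}(s)\,\mathrm{d}s\|\,\mathrm{d}t$ recorded just before the theorem --- to exchange the $s$- and $t$-integrals and then integrate by parts in $t$ on $(s,T)$; the interior double integral cancels against $\int_0^T\langle D_i,\Delta\phi_i\rangle\,\mathrm{d}t$ exactly as in the linear case, while the boundary value at $t=s^+$ (where $\nabla_k e^{(t-s)\Delta}\mathbb{P}_{ij}F_{jk}(s)\to\nabla_k\mathbb{P}_{ij}F_{jk}(s)$ against the smooth $\phi_i(s)$) produces precisely $-\int_0^T\langle F_{jk}(s),\mathbb{P}_{ij}\nabla_k\phi_i(s)\rangle\,\mathrm{d}s$ after using self-adjointness of $\mathbb{P}$. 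Summing, the $\langle f_i,\phi_i(0)\rangle$ and forcing terms in \eqref{projform} are exactly cancelled, so $v$ satisfies the momentum equation; the divergence condition follows from $\nabla_i e^{t\Delta}f_i=e^{t\Delta}\nabla_i f_i=0$ and the multiplier identity $\nabla_i\mathbb{P}_{ij}=0$.

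For the converse, which is the heart of the theorem, I would recover $u$ pointwise in time by testing the momentum equation against a backward heat flow. Fixing $\tau\in(0,T)$, $\zeta\in C_c^\infty(\mathbb{R}^n)$ and a temporal cutoff $\kappa\in C_c^\infty([0,\tau))$ with $\kappa(0)=1$, I would take $\phi_i(t,x)=\kappa(t)[e^{(\tau-t)\Delta}\zeta_i](x)$. The crucial point is that $\partial_t[e^{(\tau-t)\Delta}\zeta_i]=-\Delta e^{(\tau-t)\Delta}\zeta_i$, so $\phi_i'(t)+\Delta\phi_i(t)=\kappa'(t)e^{(\tau-t)\Delta}\zeta_i$, and after moving $e^{(\tau-t)\Delta}$, $\nabla_k$ and $\mathbb{P}$ onto the forcing factor, \eqref{projform} becomes
\[
\langle e^{\tau\Delta}f_i,\zeta_i\rangle + \int_0^\tau \kappa'(t)\langle u_i(t),e^{(\tau-t)\Delta}\zeta_i\rangle\,\mathrm{d}t - \int_0^\tau \kappa(t)\langle\nabla_k e^{(\tau-t)\Delta}\mathbb{P}_{ij}F_{jk}(t),\zeta_i\rangle\,\mathrm{d}t = 0.
\]
Choosing $\kappa$ to approximate $\mathbf{1}_{[0,\tau]}$, so that $\kappa'$ concentrates as a negative unit mass at $\tau$, and using $e^{(\tau-t)\Delta}\zeta\to\zeta$ as $t\to\tau^-$, the Lebesgue differentiation theorem applied to the locally integrable map $t\mapsto\langle u_i(t),\zeta_i\rangle$ gives $\int_0^\tau\kappa'(t)\langle u_i(t),e^{(\tau-t)\Delta}\zeta_i\rangle\,\mathrm{d}t\to-\langle u_i(\tau),\zeta_i\rangle$ for a.e.\ $\tau$, while the forcing term tends to $\langle D_i(\tau),\zeta_i\rangle$ for the Duhamel part $D_i$ of $v$. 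This yields $\langle u_i(\tau),\zeta_i\rangle=\langle e^{\tau\Delta}f_i+D_i(\tau),\zeta_i\rangle=\langle v_i(\tau),\zeta_i\rangle$; running $\zeta$ over a countable dense family fixes a single full-measure set of admissible $\tau$ and gives $u(\tau)=v(\tau)$ in $L^{p,\infty}(\mathbb{R}^n)$ for a.e.\ $\tau$, hence $u=v$ almost everywhere.

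I expect the main obstacle to be that $\phi_i(t,x)=\kappa(t)e^{(\tau-t)\Delta}\zeta_i(x)$ is not compactly supported in space, so it is not literally admissible in \eqref{projform}. I would therefore insert a spatial cutoff $\chi_R(x)=\chi(x/R)$ and control the resulting commutator errors --- the terms $\nabla\chi_R\cdot\nabla e^{(\tau-t)\Delta}\zeta$ and $(\Delta\chi_R)\,e^{(\tau-t)\Delta}\zeta$ paired against $u(t)$, supported on $\{R\leq|x|\leq 2R\}$ --- showing they vanish as $R\to\infty$. For $t$ bounded away from $\tau$ the factor $e^{(\tau-t)\Delta}\zeta$ and its derivatives enjoy Gaussian spatial decay, while near $t=\tau$ they are supported (up to negligible error) where $\zeta$ is, so for $R$ large the errors are controlled using the membership of $u$ in its Lorentz class and the estimate preceding the theorem, followed by dominated convergence in $t$. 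The residual measurability issues in pairing weakly* measurable functions against these test functions are handled, as elsewhere in the paper, by approximating the test functions via Pettis' theorem.
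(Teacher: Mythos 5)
Your proposal is correct, and its forward half (mild $\Rightarrow$ projected) is essentially the paper's own argument: both throw the semigroup onto the test function by duality, integrate by parts in time using $\Phi'=\Delta\Phi$, and recover the boundary contributions $-\langle f_{i},\phi_{i}(0)\rangle$ and $-\int_{0}^{T}\langle F_{jk},\mathbb{P}_{ij}\nabla_{k}\phi_{i}\rangle\,\mathrm{d}t$; the paper justifies the limits you leave implicit at $t=0$ and $t=s^{+}$ by inserting an extra shift $e^{(\epsilon+t-s)\Delta}$ and removing $\epsilon$ via approximation of identity in $C_{b,u}^{0}(\mathbb{R}^{n})$, which is the tidy version of exactly those boundary-term justifications. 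The converse is where you genuinely diverge. The paper never evaluates $u$ at a fixed time: it uses the forward direction to see that $v$ itself solves the projected formulation, so that $w=u-v$ satisfies $\int_{0}^{T}\langle w,\phi'+\Delta\phi\rangle\,\mathrm{d}t=0$, and then tests with $\phi_{R,\epsilon}(t,x)=-\rho_{R}(x)\int_{t}^{T}e^{(s-t+\epsilon)\Delta}\psi(s,x)\,\mathrm{d}s$ for arbitrary $\psi\in C_{c}^{\infty}((0,T)\times\mathbb{R}^{n})$; letting $R\rightarrow\infty$ and then $\epsilon\rightarrow0$ gives $\int_{0}^{T}\langle w,\psi\rangle\,\mathrm{d}t=0$, hence $w=0$ a.e., using only integrated-in-time limits. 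You instead recover $u(\tau)$ at a.e.\ fixed $\tau$ by a Fabes--Jones--Riviere-style duality against the backward kernel $\kappa(t)\chi_{R}(x)e^{(\tau-t)\Delta}\zeta(x)$ with $\kappa'$ concentrating at $\tau$. Your route is self-contained (you never need to know that $v$ solves the projected problem) and identifies $u(\tau)$ with the Duhamel formula more explicitly, but it costs the measure-theoretic bookkeeping that the paper's route avoids: your exceptional null set of $\tau$ must consist simultaneously of Lebesgue points of $t\mapsto\langle u(t),\zeta\rangle$ for every $\zeta$ in the countable family \emph{and} of $t\mapsto{\|u(t)\|}_{{\left(L^{p',1}(\mathbb{R}^{n})\right)}^{*}}$ --- the latter is genuinely needed, since your test function depends on $t$ and the cross term requires controlling $h^{-1}\int_{\tau-h}^{\tau}{\|u(t)\|}\,\mathrm{d}t$ against $\sup_{0<\sigma<h}{\|(e^{\sigma\Delta}-1)\zeta\|}_{L^{p',1}(\mathbb{R}^{n})}$, not just the Lebesgue average of $\langle u(t),\zeta\rangle$ --- and $\tau$ must also lie in the a.e.\ set where $\int_{0}^{\tau}{(\tau-t)}^{-1/2}{\|F(t)\|}\,\mathrm{d}t<\infty$ (which follows by Fubini from the estimate you cite) so that the forcing term converges by domination. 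Your cutoff-error analysis (Gaussian decay of $e^{(\tau-t)\Delta}\zeta$ and its gradient on $\{|x|\sim R\}$, uniformly over $\tau-t\in(0,T']$) is sound; the paper needs the same kind of estimate for its own $\rho_{R}$ errors, so neither route escapes that step.
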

\begin{proof}
    For the purposes of our proof, we choose a fixed function $\rho\in C_{c}^{\infty}(\mathbb{R}^{n})$, which satisfies $\rho(x)=1$ for $|x|<1$ and $\rho(x)=0$ for $|x|>2$. We define $\rho_{R}(x):=\rho(x/R)$ for $R\in(0,\infty)$ and $x\in\mathbb{R}^{n}$. We define $G_{ik}:=\mathbb{P}_{ij}F_{jk}$, $v^{0}(t):=e^{t\Delta}f$, and $v^{1}:=v-v^{0}$.

    We start by verifying that
    \begin{equation}\label{mild-div-free}
        \int_{0}^{T}\langle v_{i}(t),\nabla_{i}\psi(t)\rangle\,\mathrm{d}t=0 \quad \forall\, \psi\in C_{c}^{\infty}((0,T)\times\mathbb{R}^{n}).
    \end{equation}
    By Fubini's theorem, integration by parts, and dominated convergence, we have
    \begin{equation}
        \langle e^{t\Delta}f_{i},\nabla_{i}\phi\rangle =  \langle f_{i},\nabla_{i}e^{t\Delta}\phi\rangle = \lim_{R\rightarrow\infty}\langle f_{i},\nabla_{i}\left[\rho_{R}e^{t\Delta}\phi\right]\rangle = 0
    \end{equation}
    for all $\phi\in C_{c}^{\infty}(\mathbb{R}^{n})$ and $t\in(0,\infty)$, while
    \begin{equation}
        \int_{\mathbb{R}^{n}}\left(\int_{0}^{t}\nabla_{k}e^{(t-s)\Delta}G_{ik}(s,x)\,\mathrm{d}s\right)\nabla_{i}\phi(x)\,\mathrm{d}x = -\lim_{R\rightarrow\infty}\int_{0}^{t}\langle G_{ik}(s),\nabla_{i}\left[\rho_{R}\nabla_{k}e^{(t-s)\Delta}\phi\right]\rangle\,\mathrm{d}s = 0
    \end{equation}
    for all $\phi\in C_{c}^{\infty}(\mathbb{R}^{n})$ for almost every $t\in(0,T)$, where we used the fact that $\mathbb{P}_{ij}\nabla_{i}\theta=0$ for all $\theta\in C_{c}^{\infty}(\mathbb{R}^{n})$. Therefore \eqref{mild-div-free} is verified.

    Next we verify that
    \begin{equation}\label{mild-implies-weak}
        \langle f_{i},\phi_{i}(0)\rangle + \int_{0}^{T}\left(\langle v_{i}(t),\phi_{i}'(t)+\Delta\phi_{i}(t)\rangle + \langle G_{ik}(t),\nabla_{k}\phi_{i}(t)\rangle\right)\,\mathrm{d}t = 0 \quad \forall\,\phi\in C_{c}^{\infty}\left([0,T)\times\mathbb{R}^{n}\right).
    \end{equation}
    By Fubini's theorem, integration by parts, and the identity $\Phi'=\Delta\Phi$, for $\phi\in C_{c}^{\infty}([0,T)\times\mathbb{R}^{n})$ and $(\epsilon,s,x)\in(0,\infty)\times[0,T)\times\mathbb{R}^{n}$ we have
    \begin{equation}
    \begin{aligned}
        \int_{s}^{T}e^{(\epsilon+t-s)\Delta}\phi'(t,x)\,\mathrm{d}t &= \int_{s}^{T}\int_{\mathbb{R}^{n}}\Phi(\epsilon+t-s,x-y)\phi'(t,y)\,\mathrm{d}y\,\mathrm{d}t \\
        &= -\int_{\mathbb{R}^{n}}\Phi(\epsilon,x-y)\phi(s,y)\,\mathrm{d}y - \int_{s}^{T}\int_{\mathbb{R}^{n}}\Phi(\epsilon+t-s,x-y)\Delta\phi(t,y)\,\mathrm{d}y\,\mathrm{d}t \\
        &= -e^{\epsilon\Delta}\phi(s,x) - \int_{s}^{T}e^{(\epsilon+t-s)\Delta}\Delta\phi(t,x)\,\mathrm{d}t.
    \end{aligned}
    \end{equation}
    By Fubini's theorem and the identity $\Phi(\epsilon+t-s)=\Phi(\epsilon)*\Phi(t-s)$, we may write $e^{(\epsilon+t-s)\Delta}=e^{\epsilon\Delta}e^{(t-s)\Delta}$ and take the $e^{\epsilon\Delta}$ outside the integrals to obtain
    \begin{equation}
        e^{\epsilon\Delta}\phi(s,x) + e^{\epsilon\Delta}\int_{s}^{T}e^{(t-s)\Delta}\phi'(t,x)\,\mathrm{d}t + e^{\epsilon\Delta}\int_{s}^{T}e^{(t-s)\Delta}\Delta\phi(t,x)\,\mathrm{d}t = 0
    \end{equation}
    for $\phi\in C_{c}^{\infty}([0,T)\times\mathbb{R}^{n})$ and $(\epsilon,s,x)\in(0,\infty)\times[0,T)\times\mathbb{R}^{n}$. By approximation of identity in $C_{b,u}^{0}(\mathbb{R}^{n})$, we obtain the equality
    \begin{equation}
        \phi(s,x) + \int_{s}^{T}e^{(t-s)\Delta}\phi'(t,x)\,\mathrm{d}t + \int_{s}^{T}e^{(t-s)\Delta}\Delta\phi(t,x)\,\mathrm{d}t = 0
    \end{equation}
    for all $\phi\in C_{c}^{\infty}([0,T)\times\mathbb{R}^{n})$ and $(s,x)\in[0,T)\times\mathbb{R}^{n}$. This last equality allows us to compute
    \begin{equation}
    \begin{aligned}
        \int_{0}^{T}\langle v_{i}^{0}(t),\phi_{i}'(t)+\Delta\phi_{i}(t)\rangle\,\mathrm{d}t &= \int_{0}^{T}\langle e^{t\Delta}f_{i},\phi_{i}'(t)+\Delta\phi_{i}(t)\rangle\,\mathrm{d}t \\
        &= \int_{0}^{T}\langle f_{i},e^{t\Delta}\phi_{i}'(t)+e^{t\Delta}\Delta\phi_{i}(t)\rangle\,\mathrm{d}t \\
        &= -\langle f_{i},\phi_{i}(0)\rangle
    \end{aligned}
    \end{equation}
    and
    \begin{equation}
    \begin{aligned}
        \int_{0}^{T}\langle v_{i}^{1}(t),\phi_{i}'(t)+\Delta\phi_{i}(t)\rangle\,\mathrm{d}t &= -\int_{0}^{T}\left(\int_{0}^{t}\langle\nabla_{k}e^{(t-s)\Delta}G_{ik}(s),\phi_{i}'(t)+\Delta\phi_{i}(t)\rangle\,\mathrm{d}s\right)\,\mathrm{d}t \\
        &= \int_{0}^{T}\left(\int_{s}^{T}\langle G_{ik}(s),\nabla_{k}e^{(t-s)\Delta}\phi_{i}'(t)+\nabla_{k}e^{(t-s)\Delta}\Delta\phi_{i}(t)\rangle\,\mathrm{d}t\right)\,\mathrm{d}s \\
        &= -\int_{0}^{T}\langle G_{ik}(s),\nabla_{k}\phi_{i}(s)\rangle\,\mathrm{d}s
    \end{aligned}
    \end{equation}
    for $\phi\in C_{c}^{\infty}([0,T)\times\mathbb{R}^{n})$. Therefore \eqref{mild-implies-weak} is verified.

    Having established \eqref{mild-div-free} and \eqref{mild-implies-weak}, we have that $u\in\mathcal{L}_{p,\infty}^{1;*}(T_{-})$ satisfies the projected formulation of the linearised Navier-Stokes equations if and only if
    \begin{equation}\label{mild-implies-weak-final}
        \int_{0}^{t}\langle u(t),\phi'(t)+\Delta\phi(t)\rangle\,\mathrm{d}t=\int_{0}^{t}\langle v(t),\phi'(t)+\Delta\phi(t)\rangle\,\mathrm{d}t \quad \forall\,\phi\in C_{c}^{\infty}([0,T)\times\mathbb{R}^{n}).
    \end{equation}
    We claim that \eqref{mild-implies-weak-final} occurs if and only if $[u(t)](x)=v(t,x)$ for almost every $(t,x)\in(0,T)\times\mathbb{R}^{n}$. One direction is obvious, so we prove the other direction. Assume that \eqref{mild-implies-weak-final} holds, and let $\psi\in C_{c}^{\infty}((0,T)\times\mathbb{R}^{n})$ be arbitrary. For $R,\epsilon\in(0,\infty)$, we consider the test function $\phi_{R,\epsilon}\in C_{c}^{\infty}([0,T)\times\mathbb{R}^{n})$ given by
    \begin{equation}
        \phi_{R,\epsilon}(t,x) := -\rho_{R}(x)\int_{t}^{T}e^{(s-t+\epsilon)\Delta}\psi(s,x)\,\mathrm{d}s,
    \end{equation}
    which satisfies
    \begin{equation}
        \phi_{R,\epsilon}'(t,x) = \rho_{m}(x)e^{\epsilon\Delta}\psi(t,x) + \rho_{R}(x)\int_{t}^{T}\Delta e^{(s-t+\epsilon)\Delta}\psi(s,x)\,\mathrm{d}s,
    \end{equation}
    \begin{equation}
    \begin{aligned}
        \Delta\phi_{R,\epsilon}(t,x) &= - \rho_{R}(x)\int_{t}^{T}\Delta e^{(s-t+\epsilon)\Delta}\psi(s,x)\,\mathrm{d}s \\
        &\qquad - 2\nabla\rho_{R}(x)\cdot\int_{t}^{T}\nabla e^{(s-t+\epsilon)\Delta}\psi(s,x)\,\mathrm{d}s \\
        &\qquad - \Delta\rho_{R}(x)\int_{t}^{T}e^{(s-t+\epsilon)\Delta}\psi(s,x)\,\mathrm{d}s,
    \end{aligned}
    \end{equation}
    so for $w=u-v$ we have
    \begin{equation}
    \begin{aligned}
        0 &= \int_{0}^{T}\langle w(t),\phi_{R,\epsilon}'(t)+\Delta\phi_{R,\epsilon}(t)\rangle\,\mathrm{d}t \\
        &= \int_{0}^{T}\langle w(t),\rho_{R}e^{\epsilon\Delta}\psi(t)\rangle\,\mathrm{d}t - 2\int_{0}^{T}\int_{t}^{T}\langle w(t),\nabla\rho_{R}\cdot e^{(s-t+\epsilon)\Delta}\nabla\psi(s)\rangle\,\mathrm{d}s\,\mathrm{d}t \\
        &\qquad - \int_{0}^{T}\int_{t}^{T}\langle w(t),\Delta\rho_{R}e^{(s-t+\epsilon)\Delta}\psi(s)\rangle\,\mathrm{d}s\,\mathrm{d}t.
    \end{aligned}
    \end{equation}
    In the limit $R\rightarrow\infty$ we obtain
    \begin{equation}
        \int_{0}^{T}\langle w(t),e^{\epsilon\Delta}\psi(t)\rangle\,\mathrm{d}t = 0,
    \end{equation}
    so in the limit $\epsilon\rightarrow0$ we obtain
    \begin{equation}
        \int_{0}^{T}\langle w(t),\psi(t)\rangle\,\mathrm{d}t = 0.
    \end{equation}
    Therefore $\int_{0}^{T}\langle u(t),\psi(t)\rangle\,\mathrm{d}t=\int_{0}^{T}\langle v(t),\psi(t)\rangle\,\mathrm{d}t$ for all $\psi\in C_{c}^{\infty}((0,T)\times\mathbb{R}^{n})$, so $[u(t)](x)=v(t,x)$ for almost every $(t,x)\in(0,T)\times\mathbb{R}^{n}$.
\end{proof}
\subsection{Approximation of solenoidal vector fields and the very weak formulation}
For $\phi\in C_{c}^{\infty}(\mathbb{R}^{n};\mathbb{R}^{n})$, the expression $\psi_{i}=\mathbb{P}_{ij}\phi_{j}$ defines a vector field $\psi$ which is smooth and divergence-free, but not necessarily compactly supported. The following lemma allows us to approximate certain smooth, divergence-free vector fields by smooth, divergence-free, compactly supported vector fields.
\begin{lemma}\label{prevlemma}
    Let $p\in(1,\infty)$, $q\in[1,\infty)$ and $k\in\mathbb{Z}_{\geq0}$, and suppose that $\phi\in C^{\infty}(\mathbb{R}^{n};\mathbb{R}^{n})$ satisfies $\nabla\cdot\phi=0$ and ${\|\phi\|}_{W_{p,q}^{k}(\mathbb{R}^{n})}:=\sum_{j=0}^{k}{\|\nabla^{j}\phi\|}_{L^{p,q}(\mathbb{R}^{n})}<\infty$. Then there exist $\phi_{R}\in C_{c}^{\infty}(\mathbb{R}^{n};\mathbb{R}^{n})$ with $\nabla\cdot\phi_{R}=0$ such that ${\|\phi-\phi_{R}\|}_{W_{p,q}^{k}(\mathbb{R}^{n})}\rightarrow0$ as $R\rightarrow\infty$. Moreover, the approximating functions $\phi_{R}$ can be chosen indepently of $p,q,k$.
\end{lemma}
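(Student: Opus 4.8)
The plan is to cut $\phi$ off spatially and then repair the divergence that the cutoff introduces. Fix $\rho\in C_{c}^{\infty}(\mathbb{R}^{n})$ with $\rho\equiv1$ on $\{|x|<1\}$ and $\rho\equiv0$ on $\{|x|>2\}$, and set $\rho_{R}(x)=\rho(x/R)$, so that $\nabla\rho_{R}$ is supported on the annulus $A_{R}:=\{R\le|x|\le 2R\}$ with $|\nabla^{a}\rho_{R}|\lesssim_{a}R^{-a}$. The field $\rho_{R}\phi\in C_{c}^{\infty}(\mathbb{R}^{n};\mathbb{R}^{n})$ is not solenoidal, but its divergence $g_{R}:=(\nabla_{j}\rho_{R})\,\phi_{j}$ is smooth, supported in $A_{R}$, and of zero mean, since $\int_{\mathbb{R}^{n}}g_{R}\,\mathrm{d}x=-\int_{\mathbb{R}^{n}}\rho_{R}\,(\nabla\cdot\phi)\,\mathrm{d}x=0$ using $\nabla\cdot\phi=0$. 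The task reduces to subtracting a compactly supported field with divergence exactly $g_{R}$.

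For this I would use the Bogovskii operator $B$ on the ball $B_{2R}$, built from a fixed bump $\omega\in C_{c}^{\infty}(B_{1})$ with $\int\omega=1$: for smooth zero-mean data supported in $B_{2R}$ it produces $w=Bg\in C_{c}^{\infty}(B_{2R})$ solving $\nabla\cdot w=g$. Setting $w_{R}:=Bg_{R}$ and $\phi_{R}:=\rho_{R}\phi-w_{R}$ gives $\phi_{R}\in C_{c}^{\infty}(\mathbb{R}^{n};\mathbb{R}^{n})$ with $\nabla\cdot\phi_{R}=g_{R}-\nabla\cdot w_{R}=0$. Since $\rho$ and $\omega$ are fixed once and for all, $\phi_{R}$ depends only on $R$ and $\phi$ and not on $p,q,k$, which yields the asserted independence.

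For the convergence I would write $\phi-\phi_{R}=(1-\rho_{R})\phi+w_{R}$ and estimate each term in $W_{p,q}^{k}$. For the cutoff term, the Leibniz rule together with property (v) (giving $\|h\psi\|_{L^{p,q}(\mathbb{R}^{n})}\le\|h\|_{L^{\infty}(\mathbb{R}^{n})}\|\psi\|_{L^{p,q}(\mathbb{R}^{n})}$) bounds $\|\nabla^{m}((1-\rho_{R})\phi)\|_{L^{p,q}(\mathbb{R}^{n})}$ by $\|\nabla^{m}\phi\,\mathbf{1}_{\{|x|>R\}}\|_{L^{p,q}(\mathbb{R}^{n})}+R^{-1}\sum_{b<m}\|\nabla^{b}\phi\,\mathbf{1}_{A_{R}}\|_{L^{p,q}(\mathbb{R}^{n})}$, each summand being a Lorentz tail that vanishes as $R\to\infty$ precisely because $q<\infty$. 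For $w_{R}$, the derivatives $\nabla^{m+1}B$ are Calderón--Zygmund operators bounded on $L^{p}(B_{2R})$ for all $p\in(1,\infty)$, hence on $L^{p,q}$ by real interpolation as encapsulated in property (ix); rescaling $x\mapsto Rx$ to the fixed ball $B_{2}$ shows, via the homogeneity of $\nabla\cdot$, that these bounds are scale-invariant, so $\|w_{R}\|_{L^{p,q}(\mathbb{R}^{n})}\lesssim R\|g_{R}\|_{L^{p,q}(\mathbb{R}^{n})}$ and $\|\nabla^{m+1}w_{R}\|_{L^{p,q}(\mathbb{R}^{n})}\lesssim\|\nabla^{m}g_{R}\|_{L^{p,q}(\mathbb{R}^{n})}$ uniformly in $R$. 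Since $\|\nabla^{m}g_{R}\|_{L^{p,q}(\mathbb{R}^{n})}\lesssim R^{-1}\sum_{b\le m}\|\nabla^{b}\phi\,\mathbf{1}_{A_{R}}\|_{L^{p,q}(\mathbb{R}^{n})}$, every contribution from $w_{R}$ is again controlled by vanishing annular tails, giving $\|\phi-\phi_{R}\|_{W_{p,q}^{k}(\mathbb{R}^{n})}\to0$.

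The main obstacle is the divergence-correcting step carried out with constants uniform in $R$ and valid across the whole range $p\in(1,\infty)$. The tempting alternative of an antisymmetric potential $A_{ij}=\Delta^{-1}(\nabla_{j}\phi_{i}-\nabla_{i}\phi_{j})$, for which $\phi_{i}=\nabla_{j}A_{ij}$ and $\nabla_{j}(\rho_{R}A_{ij})$ is automatically solenoidal, fails to deliver uniform control when $p\ge n$, since the order-one Riesz potential defining $A$ satisfies only a Sobolev bound into $L^{np/(n-p)}$; the resulting annular term $R^{-1}A$ need not be small after the loss incurred in passing back to $L^{p,q}$. The local Bogovskii correction avoids this because, after rescaling to the fixed ball $B_{2}$, all estimates reduce to $L^{p}(B_{2})$-boundedness of a fixed singular integral, and the scaling homogeneity of $\nabla\cdot$ cancels the powers of $R$. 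The points demanding care are therefore the smoothness and compact support of $w_{R}$, the scale-invariance of the Lorentz bounds, and the role of $q<\infty$ in forcing the annular tails to vanish; the interpolation needed is exactly the weak-type-plus-interpolation scheme already used above in constructing the Riesz transform.
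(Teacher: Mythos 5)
Your proposal is correct and follows the same skeleton as the paper's proof: cut off with $\rho_R$, observe that the error in the divergence, $g_R=\phi\cdot\nabla\rho_R$, is a smooth mean-zero function supported in the annulus, repair it with a Bogovskii-type corrector supported in $B_{2R}$, and let the factor $R^{-1}$ coming from $\nabla\rho_R$ together with the vanishing of Lorentz tails (which indeed requires $q<\infty$) kill every term. In fact the paper's corrector $v_R(x)=\int_{\mathbb{R}^n}\int_0^1 y\,\omega_R(x+y-ty)f_R(x-ty)\,\mathrm{d}t\,\mathrm{d}y$ \emph{is} the Bogovskii operator in a different parametrization, so the decomposition is identical. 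Where you diverge is in how the corrector is estimated: you invoke the standard Bogovskii/Calder\'on--Zygmund package ($\nabla^{m+1}B$ bounded on $L^p$, $1<p<\infty$, then Lorentz bounds via property (ix)), whereas the paper never uses singular-integral theory here. It instead derives the elementary pointwise bound $|\nabla^j v_R(x)|\lesssim \mathbf{1}_{|x|<2R}\sum_i\int_{|z|<4R}R^{-i}|z|^{1-n}|\nabla^{j-i}f_R(x-z)|\,\mathrm{d}z$ and applies Young's inequality with the locally integrable kernel $|z|^{1-n}\mathbf{1}_{|z|<4R}$, whose $L^1$ norm is $\lesssim R$. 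This is lossy by one power of $R$ at top order, but the $R^{-1}$ gain from the cutoff repays it, exactly as in your accounting. Your route buys brevity at the cost of citing nontrivial external results (smoothness, support, and $W^{m,p}$ bounds for $B$); the paper's buys self-containedness at the cost of the explicit kernel computation.

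One point needs fixing in your write-up: you declare the Bogovskii operator on $B_{2R}$ ``built from a fixed bump $\omega\in C_c^\infty(B_1)$'' and then claim scale-invariant constants by rescaling $x\mapsto Rx$. With the bump held fixed, the rescaled configuration has data in the unit annulus but bump supported in $B_{1/R}$, and the Bogovskii constants degrade with the ratio of domain diameter to bump radius, so they are \emph{not} uniform in $R$. To make the scaling argument honest you must dilate the bump with the domain, i.e.\ use $\omega_R(x)=R^{-n}\omega(x/R)$ (as the paper does); then rescaling reduces everything to the fixed geometry of $B_2$ with bump in $B_1$, and your claimed bounds $\|w_R\|_{L^{p,q}}\lesssim R\|g_R\|_{L^{p,q}}$ and $\|\nabla^{m+1}w_R\|_{L^{p,q}}\lesssim\|\nabla^m g_R\|_{L^{p,q}}$ (the latter using Poincar\'e on the fixed ball to pass to the homogeneous form) hold with constants independent of $R$. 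With that correction, your argument is complete.
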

\begin{proof}
    Let $\rho\in C_{c}^{\infty}(B(0,2);\mathbb{R})$ be a fixed function satisfying $\rho(x)=1$ for $|x|<1$, and let $\rho_{R}(x):=\rho(x/R)$. Then $\rho_{R}\phi\in C_{c}^{\infty}(B(0,2R);\mathbb{R}^{n})$ approximates $\phi$ in $W_{p,q}^{k}(\mathbb{R}^{n})$, since for $j\in\{0,\cdots,k\}$ we have
    \begin{equation}
    \begin{aligned}
        {\|\nabla^{j}\left(\left(1-\rho_{R}\right)\phi\right)\|}_{L^{p,q}(\mathbb{R}^{n})} &\lesssim \sum_{i=0}^{j}{\|\nabla^{i}(1-\rho_{R})\nabla^{j-i}\phi\,\mathbf{1}_{|x|>R}\|}_{L^{p,q}(\mathbb{R}^{n})} \\
        &\lesssim \sum_{i=0}^{j}R^{-i}{\|\nabla^{j-i}\phi\,\mathbf{1}_{|x|>R}\|}_{L^{p,q}(\mathbb{R}^{n})} \overset{R\rightarrow\infty}{\rightarrow} 0.
    \end{aligned}
    \end{equation}
    We are not done yet, because $f_{R}:=\nabla\cdot\left(\rho_{R}\phi\right)=\phi\cdot\nabla\rho_{R}$ need not be zero. We will resolve this issue by constructing $v_{R}\in C_{c}^{\infty}(\mathbb{R}^{n};\mathbb{R}^{n})$ satisfying $\nabla\cdot v_{R}=f_{R}$ and ${\|v_{R}\|}_{W_{p,q}^{k}(\mathbb{R}^{n})}\overset{R\rightarrow\infty}{\rightarrow}0$, so that $\phi_{R}=\rho_{R}\phi-v_{R}$ is as required. To this end, let $\omega\in C_{c}^{\infty}(B(0,2);\mathbb{R})$ be a fixed function satisfying $\int_{\mathbb{R}^{n}}\omega(x)\,\mathrm{d}x=1$, and let $\omega_{R}(x):=R^{-n}\omega(x/R)$, so that
    \begin{equation}
    \begin{aligned}
        f_{R}(x) &= \int_{\mathbb{R}^{n}}\left(\omega_{R}(x+y)f_{R}(x)-\omega_{R}(x)f_{R}(x-y)\right)\,\mathrm{d}y \\
        &= -\int_{\mathbb{R}^{n}}\int_{0}^{1}\frac{\partial}{\partial t}\left[\omega_{R}(x+y-ty)f_{R}(x-ty)\right]\,\mathrm{d}t\,\mathrm{d}y \\
        &= \int_{\mathbb{R}^{n}}\int_{0}^{1}y\cdot\frac{\partial}{\partial x}\left[\omega_{R}\left(x+y-ty\right)f_{R}\left(x-ty\right)\right]\,\mathrm{d}t\,\mathrm{d}y.
    \end{aligned}
    \end{equation}
    This last expression is the divergence of a smooth function if we can justify differentiation under the integral. For $\alpha\in\mathbb{Z}_{\geq0}^{n}$ with $|\alpha|=j$ we have
    \begin{equation}
    \begin{aligned}
        &\quad \int_{\mathbb{R}^{n}}\int_{0}^{1}\left|y\frac{\partial^{\alpha}}{{\partial x}^{\alpha}}\left[\omega_{R}\left(x+y-ty\right)f_{R}\left(x-ty\right)\right]\right|\,\mathrm{d}t\,\mathrm{d}y \\
        &\lesssim \sum_{i=0}^{j}\int_{0}^{1}\int_{\mathbb{R}^{n}}\left|y\nabla^{i}\omega_{R}\left(x+y-ty\right)\nabla^{j-i}f_{R}\left(x-ty\right)\right|\,\mathrm{d}y\,\mathrm{d}t \\
        &= \sum_{i=0}^{j}\int_{0}^{1}\int_{\mathbb{R}^{n}}\left|z\nabla^{i}\omega_{R}\left(x+t^{-1}z-z\right)\nabla^{j-i}f_{R}\left(x-z\right)\right|t^{-(n+1)}\,\mathrm{d}z\,\mathrm{d}t \\
        &= \sum_{i=0}^{j}\int_{0}^{\infty}\int_{\mathbb{R}^{n}}\left|z\nabla^{i}\omega_{R}\left(x+rz\right)\nabla^{j-i}f_{R}\left(x-z\right)\right|{(1+r)}^{n-1}\,\mathrm{d}z\,\mathrm{d}r \\
        &= \mathbf{1}_{|x|<2R}\sum_{i=0}^{j}\int_{|z|<4R}\int_{0}^{4R/|z|}\left|z\nabla^{i}\omega_{R}\left(x+rz\right)\nabla^{j-i}f_{R}\left(x-z\right)\right|{(1+r)}^{n-1}\,\mathrm{d}r\,\mathrm{d}z \\
        &\lesssim \mathbf{1}_{|x|<2R}\sum_{i=0}^{j}\int_{|z|<4R}\int_{0}^{4R/|z|}R^{-n-i}\left|z\nabla^{j-i}f_{R}\left(x-z\right)\right|{(1+r)}^{n-1}\,\mathrm{d}r\,\mathrm{d}z \\
        &\lesssim \mathbf{1}_{|x|<2R}\sum_{i=0}^{j}\int_{|z|<4R}R^{-n-i}\left|z\nabla^{j-i}f_{R}\left(x-z\right)\right|\left({\left(1+\frac{4R}{|z|}\right)}^{n}-1\right)\,\mathrm{d}z \\
        &\lesssim \mathbf{1}_{|x|<2R}\sum_{i=0}^{j}\int_{|z|<4R}R^{-i}{|z|}^{1-n}\left|\nabla^{j-i}f_{R}(x-z)\right|\,\mathrm{d}z,
    \end{aligned}
    \end{equation}
    where
    \begin{equation}
        \int_{|z|<4R}R^{-i}{|z|}^{1-n}\,\mathrm{d}z \lesssim R^{1-i}.
    \end{equation}
    We deduce from these estimates that
    \begin{equation}
        v_{R}(x) := \int_{\mathbb{R}^{n}}\int_{0}^{1}y\,\omega_{R}(x+y-ty)f_{R}(x-ty)\,\mathrm{d}t\,\mathrm{d}y
    \end{equation}
    defines $v_{R}\in C_{c}^{\infty}(\mathbb{R}^{n};\mathbb{R}^{n})$ satisfying $\nabla\cdot v_{R}=f_{R}$ and
    \begin{equation}
        \left|\nabla^{j}v_{R}(x)\right| \lesssim \mathbf{1}_{|x|<2R}\sum_{i=0}^{j}\int_{|z|<4R}R^{-i}{|z|}^{1-n}\left|\nabla^{j-i}f_{R}(x-z)\right|\,\mathrm{d}z.
    \end{equation}
    From the convolution inequality ${\|K*f\|}_{L^{p,q}(\mathbb{R}^{n})}\leq{\|K\|}_{L^{1}(\mathbb{R}^{n})}{\|f\|}_{L^{p,q}(\mathbb{R}^{n})}$, it follows for $j\in\{0,\cdots,k\}$ that
    \begin{equation}
    \begin{aligned}
        {\|\nabla^{j}v_{R}\|}_{L^{p,q}(\mathbb{R}^{n})} &\lesssim \sum_{i=0}^{j}R^{1-i}{\|\nabla^{j-i}f_{R}\|}_{L^{p,q}(\mathbb{R}^{n})} \\
        &= \sum_{i=0}^{j}R^{1-i}{\|\nabla^{j-i}\left(\phi\cdot\nabla\rho_{R}\right)\|}_{L^{p,q}(\mathbb{R}^{n})} \\
        &\lesssim \sum_{i=0}^{j}\sum_{l=0}^{j-i}R^{1-i}{\|\nabla^{l+1}\rho_{R}\nabla^{j-i-l}\phi\,\mathbf{1}_{|x|>R}\|}_{L^{p,q}(\mathbb{R}^{n})} \\
        &\lesssim \sum_{i=0}^{j}\sum_{l=0}^{j-i}R^{-i-l}{\|\nabla^{j-i-l}\phi\,\mathbf{1}_{|x|>R}\|}_{L^{p,q}(\mathbb{R}^{n})} \\
        &\lesssim \sum_{m=0}^{j}R^{-m}{\|\nabla^{j-m}\phi\,\mathbf{1}_{|x|>R}\|}_{L^{p,q}(\mathbb{R}^{n})} \overset{R\rightarrow\infty}{\rightarrow} 0. \\
    \end{aligned}
    \end{equation}
\end{proof}
For $T\in(0,\infty]$, $p_{0},p,\tilde{p}\in(1,\infty)$, $f\in L^{p_{0},\infty}(\mathbb{R}^{n})$ satisfying $\langle f_{i},\nabla_{i}\phi\rangle=0$ for all $\phi\in C_{c}^{\infty}(\mathbb{R}^{n})$, and $F\in\mathcal{L}_{\tilde{p},\infty}^{1;*}(T_{-})$, we say that $u\in\mathcal{L}_{p,\infty}^{1;*}(T_{-})$ satisfies the \textbf{\textit{very weak formulation}} of the linearised Navier-Stokes equations if $\int_{0}^{T}\langle u_{j}(t),\nabla_{j}\psi(t)\rangle\,\mathrm{d}t$ for all $\psi\in C_{c}^{\infty}((0,T)\times\mathbb{R}^{n})$, and
\begin{equation}
    \langle f_{i},\theta(0)\phi_{i}\rangle + \int_{0}^{T}\left(\langle u_{i}(t),\theta'(t)\phi_{i}+\theta(t)\Delta\phi_{i}\rangle + \langle F_{jk}(t),\theta(t)\nabla_{k}\phi_{j}\rangle\right)\,\mathrm{d}t = 0
\end{equation}
for all $\theta\in C_{c}^{\infty}([0,T))$ and $\phi\in C_{c,\sigma}^{\infty}(\mathbb{R}^{n})$, where the subscript $\sigma$ means that $\nabla\cdot\phi=0$.
\begin{theorem}
    Let $T\in(0,\infty]$, $p_{0},p,\tilde{p}\in(1,\infty)$, $f\in L^{p_{0},\infty}(\mathbb{R}^{n})$ with $\langle f_{i},\nabla_{i}\phi\rangle=0$ for all $\phi\in C_{c}^{\infty}(\mathbb{R}^{n})$, and $F\in\mathcal{L}_{\tilde{p},\infty}^{1;*}(T_{-})$. Then $u\in\mathcal{L}_{p,\infty}^{1;*}(T_{-})$ satisfies the projected formulation of the linearised Navier-Stokes equations if and only if $u$ satisfies the very weak formulation.
\end{theorem}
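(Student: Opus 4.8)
The plan is to prove the two implications separately, since the direction ``projected $\Rightarrow$ very weak'' is essentially immediate and all the work is in the converse. For the easy direction, given $\theta\in C_{c}^{\infty}([0,T))$ and $\phi\in C_{c,\sigma}^{\infty}(\mathbb{R}^{n})$, I would insert the test function $\phi_{i}(t,x)=\theta(t)\phi_{i}(x)$ into the projected formulation. Since $\phi$ is solenoidal we have $\mathcal{R}_{i}\phi_{i}=0$ (on the Fourier side $\tfrac{-\mathrm{i}\xi_{i}}{|\xi|}\mathcal{F}\phi_{i}=0$, as $\xi\cdot\mathcal{F}\phi=0$), so $\mathbb{P}_{ij}\nabla_{k}\phi_{i}=\nabla_{k}(\phi_{j}+\mathcal{R}_{j}\mathcal{R}_{i}\phi_{i})=\nabla_{k}\phi_{j}$, and the projected identity collapses to exactly the very weak identity; measurability of the pairings is handled by Pettis' theorem as elsewhere, and the divergence-free side condition is common to both formulations.

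For the converse, the key reduction is to show that the projected formulation for a general $\phi$ only ``sees'' the solenoidal part of $\phi$. I would first extract from the common divergence-free condition its pointwise-in-time form $\langle u_{i}(t),\nabla_{i}\chi\rangle=0$ for a.e.\ $t$ and all $\chi\in C_{c}^{\infty}(\mathbb{R}^{n})$ (test against $\psi=\beta(t)\chi(x)$, vary $\beta$, and use a countable dense family of $\chi$). I then aim to establish the auxiliary facts $\mathcal{R}_{i}u_{i}(t)=0$ in $L^{p,\infty}(\mathbb{R}^{n})$ for a.e.\ $t$, and likewise $\mathcal{R}_{i}f_{i}=0$. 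Granting these, for any $\phi\in C_{c}^{\infty}([0,T)\times\mathbb{R}^{n};\mathbb{R}^{n})$ I set $\psi=\mathbb{P}\phi$, so that $\phi_{i}-\psi_{i}=-\mathcal{R}_{i}\mathcal{R}_{j}\phi_{j}$ is, for each $t$, a gradient. Using $\mathbb{P}_{ij}\nabla_{k}\phi_{i}=\nabla_{k}\psi_{j}$ together with self-adjointness $\langle\mathcal{R}g,h\rangle=-\langle g,\mathcal{R}h\rangle$, every contribution of $\phi-\psi$ vanishes: for instance $\langle u_{i},(\phi-\psi)_{i}'+\Delta(\phi-\psi)_{i}\rangle=\langle\mathcal{R}_{i}u_{i},\mathcal{R}_{j}(\phi_{j}'+\Delta\phi_{j})\rangle=0$ and $\langle f_{i},(\phi-\psi)_{i}(0)\rangle=\langle\mathcal{R}_{i}f_{i},\mathcal{R}_{j}\phi_{j}(0)\rangle=0$. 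Thus the projected formulation for $\phi$ is equivalent to the very weak identity for the field $\psi=\mathbb{P}\phi$, which is solenoidal but no longer compactly supported.

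Two things then remain. The auxiliary facts $\mathcal{R}_{i}u_{i}(t)=0$ and $\mathcal{R}_{i}f_{i}=0$ both reduce to extending the divergence-free condition from $\chi\in C_{c}^{\infty}$ to gradient fields $\mathcal{R}\beta$ with $\beta\in C_{c}^{\infty}$: since $\langle\mathcal{R}_{i}u_{i}(t),\beta\rangle=-\langle u_{i}(t),\mathcal{R}_{i}\beta\rangle$ and $\mathcal{R}_{i}\beta=\nabla_{i}\gamma$ for a smooth, decaying $\gamma$, I would truncate by $\rho_{R}\gamma\in C_{c}^{\infty}$ and note that the error $\gamma\nabla\rho_{R}$ is supported on $|x|\sim R$ with $L^{p',1}$-norm $O(R^{-n/p})\to0$ (from the Riesz-potential decay of $\gamma$ and $\nabla\gamma$), so $\nabla(\rho_{R}\gamma)\to\mathcal{R}\beta$ in $L^{p',1}(\mathbb{R}^{n})$ and continuity of the pairing forces the limit to be $0$. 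To deduce the very weak identity for $\psi=\mathbb{P}\phi$, I would (i) use Lemma \ref{prevlemma} to approximate $\psi(t,\cdot)$ by compactly supported solenoidal $\psi_{R}(t,\cdot)$ converging in $W_{r,1}^{2}(\mathbb{R}^{n})$ uniformly in $t$; and (ii) approximate the time dependence by finite sums $\sum_{l}\theta_{l}(t)\varphi_{l}(x)$ with $\varphi_{l}\in C_{c,\sigma}^{\infty}(\mathbb{R}^{n})$ (a smooth partition of unity in time with a first-order correction, so as to converge in $C^{1}$ in $t$), each summand being an admissible very weak test pair. Summing the very weak identities and passing to the limit gives the identity for $\psi$, because the relevant pairings converge: $u$ and $F$ lie in weak-Lorentz spaces dual to the predual Lorentz spaces $L^{\cdot,1}$ in which the approximations converge.

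The principal obstacle is exactly that the Leray projection $\mathbb{P}$ destroys compact support, so neither the divergence-free conditions nor the very weak formulation may be applied to $\mathbb{P}\phi$ directly; the whole argument hinges on extending these from $C_{c}^{\infty}$ to the decaying solenoidal and gradient fields produced by $\mathbb{P}$, and on performing every limit in the predual norms $L^{r',1}(\mathbb{R}^{n})$ (where $C_{c}^{\infty}$ is dense and truncation behaves well, by property (viii)) rather than in the weak-Lorentz norm $L^{r,\infty}(\mathbb{R}^{n})$. Lemma \ref{prevlemma} is the tool that makes the spatial truncation compatible with the solenoidal constraint, while the decay estimate $O(R^{-n/p})$ for the gradient-truncation error is the quantitative heart of the extension.
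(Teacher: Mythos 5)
Your proposal is correct, and its skeleton coincides with the paper's proof: the easy direction via $\mathcal{R}_{i}\phi_{i}=0$, hence $\mathbb{P}_{ij}\nabla_{k}\phi_{i}=\nabla_{k}\phi_{j}$, for solenoidal $\phi$; and the converse via the decomposition of a general test function $\phi$ into $\psi=\mathbb{P}\phi$ plus a gradient part $-\mathcal{R}_{i}\mathcal{R}_{j}\phi_{j}$, followed by an approximation of the non-compactly-supported solenoidal field $\psi$ by finite sums of admissible tensor-product test functions, with all limits taken in the predual norms $L^{r',1}(\mathbb{R}^{n})$ and with Lemma \ref{prevlemma} as the key tool. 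Two execution-level differences are worth recording. First, you prove explicitly, via the truncation $\rho_{R}\gamma$ of the potential $\gamma$ and the $O(R^{-n/p})$ (resp.\ $O(R^{-n/p_{0}})$) decay estimates, that the pairings of $u(t)$ and $f$ against the gradient part vanish (equivalently $\mathcal{R}_{i}u_{i}(t)=0$ and $\mathcal{R}_{i}f_{i}=0$); the paper compresses this into the single clause ``since $u$ and $f$ are weakly divergence-free, we are required to show \eqref{proj-very-weak}'', so your argument fills in a step the paper leaves implicit. Second, your approximation of $\psi$ truncates in space first (Lemma \ref{prevlemma} applied at each time, which is legitimate because that construction is linear and explicit, hence preserves smooth time dependence) and then discretizes time by a partition of unity with first-order correction, whereas the paper approximates the second time derivative $\psi''$ by a time-simple function with values in $C_{c,\sigma}^{\infty}(\mathbb{R}^{n})$ (using Lemma \ref{prevlemma} on its finitely many values) and integrates twice in time; the paper's device produces the $C^{1}$-in-time finite tensor-product structure automatically and avoids any uniformity-in-$t$ discussion, while yours is arguably more transparent but obliges you to verify the uniform-in-$t$ convergence of the spatial truncation and the $C^{1}$ convergence of the time discretization (both true, by compactness of the time support and the standard partition-of-unity estimate). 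Either way the limit passes, since $\delta$, $\delta'$, $\Delta\delta$, $\nabla\delta$, $\delta(0)$ converge to the corresponding quantities for $\psi$ in the norms dual to the spaces containing $u$, $F$ and $f$, so I see no genuine gap.
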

\begin{proof}
    Clearly the projected formulation implies the very weak formulation, so we prove the other direction. Assume that $u\in\mathcal{L}_{p,\infty}^{1;*}(T_{-})$ satisfies $\int_{0}^{T}\langle u_{j}(t),\nabla_{j}\psi(t)\rangle\,\mathrm{d}t$ for all $\psi\in C_{c}^{\infty}((0,T)\times\mathbb{R}^{n})$, and
    \begin{equation}
        \langle f_{i},\theta(0)\phi_{i}\rangle + \int_{0}^{T}\left(\langle u_{i}(t),\theta'(t)\phi_{i}+\theta(t)\Delta\phi_{i}\rangle + \langle F_{jk}(t),\theta(t)\nabla_{k}\phi_{j}\rangle\right)\,\mathrm{d}t = 0
    \end{equation}
    for all $\theta\in C_{c}^{\infty}([0,T))$ and $\phi\in C_{c,\sigma}^{\infty}(\mathbb{R}^{n})$. We are then required to show that
    \begin{equation}
        \langle f_{i},\phi_{i}(0)\rangle + \int_{0}^{T}\left(\langle u_{i},\phi_{i}'+\Delta\phi_{i}\rangle + \langle F_{jk},\mathbb{P}_{ij}\nabla_{k}\phi_{i}\rangle\right)\,\mathrm{d}t = 0
    \end{equation}
    for all $\phi\in C_{c}^{\infty}([0,T)\times\mathbb{R}^{n})$. Let $\phi\in C_{c}^{\infty}([0,T)\times\mathbb{R}^{n})$, and consider the smooth, divergence-free vector field $\psi$ given by $\psi_{i}=\mathbb{P}_{ij}\phi_{j}$. Since $u$ and $f$ are weakly divergence-free, we are required to show that
    \begin{equation}\label{proj-very-weak}
        \langle f_{i},\psi_{i}(0)\rangle + \int_{0}^{T}\left(\langle u_{i},\psi_{i}'+\Delta\psi_{i}\rangle + \langle F_{jk},\nabla_{k}\psi_{j}\rangle\right)\,\mathrm{d}t = 0.
    \end{equation}
    Define the norm ${\|g\|}_{X}:={\|g\|}_{L^{p_{0}',1}(\mathbb{R}^{n})}\vee{\|g\|}_{W_{p',1}^{2}(\mathbb{R}^{n})}\vee{\|\nabla g\|}_{L^{\tilde{p}',1}(\mathbb{R}^{n})}$ (where the norm ${\|\cdot\|}_{W_{p',1}^{2}(\mathbb{R}^{n})}$ was defined in the previous lemma), and let $\epsilon\in(0,\infty)$ be arbitrary. The map $\psi'':[0,T)\rightarrow X$ is continuous and compactly supported, so there exists a compactly supported simple function $\alpha:[0,T)\rightarrow X$, taking values in the range of $\psi''$, such that ${\|\psi''(t)-\alpha(t)\|}_{X}<\epsilon$ for all $t\in[0,T)$. By Lemma \ref{prevlemma}, there exists a compactly supported simple function $\beta:[0,T)\rightarrow C_{c,\sigma}^{\infty}(\mathbb{R}^{n})$ such that ${\|\alpha(t)-\beta(t)\|}_{X}<\epsilon$ for all $t\in[0,T)$. We then define $\gamma(t,x) := -\int_{t}^{T}\beta(s,x)\,\mathrm{d}s$ and $\delta(t,x) := -\int_{t}^{T}\gamma(s,x)\,\mathrm{d}s$, which satisfy
    \begin{equation}\label{gamma-estimate}
        {\|\psi'(t)-\gamma(t)\|}_{X} \leq \int_{t}^{T'}{\|\psi''(s)-\beta(s)\|}_{X}\,\mathrm{d}s < 2\epsilon(T'-t),
    \end{equation}
    \begin{equation}\label{delta-estimate}
        {\|\psi(t)-\delta(t)\|}_{X} \leq \int_{t}^{T'}{\|\psi'(s)-\gamma(s)\|}_{X}\,\mathrm{d}s < \epsilon{(T'-t)}^{2},
    \end{equation}
    where $T'\in(0,T)$ is such that $\psi,\alpha,\beta$ are compactly supported within $[0,T')$. Now $\delta$ is a finite sum of functions of the form $\theta(t)\chi(x)$, where $\theta\in C_{c}^{1}([0,T))$ and $\chi\in C_{c,\sigma}^{\infty}(\mathbb{R}^{n})$. Approximating each of the functions $\theta$ by a smooth function, and using the assumption that $u$ satisfies the very weak formulation, we obtain
    \begin{equation}
        \langle f_{i},\delta_{i}(0)\rangle + \int_{0}^{T}\left(\langle u_{i},\gamma_{i}+\Delta\delta_{i}\rangle + \langle F_{jk},\nabla_{k}\delta_{j}\rangle\right)\,\mathrm{d}t = 0,
    \end{equation}
    where we note that $\delta'=\gamma$. Using estimates \eqref{gamma-estimate} and \eqref{delta-estimate}, we deduce that the two sides of \eqref{proj-very-weak} differ by $O(\epsilon)$, which shrinks to zero as $\epsilon\rightarrow0$.
\end{proof}
\section{Energy estimates and weak-strong uniqueness}
\subsection{The energy class}
The \textbf{\textit{Sobolev space}} $H^{1}(\mathbb{R}^{n})$ consists of those $u\in L^{2}(\mathbb{R}^{n})$ for which there exists $v\in L^{2}(\mathbb{R}^{n})$ satisfying $\langle u,\nabla\phi\rangle=-\langle v,\phi\rangle$ for all $\phi\in C_{c}^{\infty}(\mathbb{R}^{n})$. For $u\in H^{1}(\mathbb{R}^{n})$, we write $\nabla u$ to denote the unique $v\in L^{2}(\mathbb{R}^{n})$ appearing in the definition. By considering $(u*\eta_{\epsilon})\rho_{R}$, where $\rho_{R}$ is a smooth cutoff function and $\eta_{\epsilon}$ is a smooth approximate identity, we see that $C_{c}^{\infty}(\mathbb{R}^{n})$ is dense in $H^{1}(\mathbb{R}^{n})$, where ${\|u\|}_{H^{1}(\mathbb{R}^{n})}:={\|(u,\nabla u)\|}_{L^{2}(\mathbb{R}^{n})}$. From (\cite{brezis2010}, p.\ 280) we have
\begin{equation}\label{pre-sobolev-inequality}
    {\|u\|}_{L^{\frac{\alpha n}{n-1}}(\mathbb{R}^{n})}^{\alpha} \leq \alpha{\|u\|}_{L^{2(\alpha-1)}(\mathbb{R}^{n})}^{\alpha-1}{\|\nabla u\|}_{L^{2}(\mathbb{R}^{n})} \quad \text{for }n\geq 2\text{ and }\alpha\in\left[\frac{3}{2},\infty\right),
\end{equation}
from which we deduce the \textbf{\textit{Sobolev inequality}}
\begin{equation}
    {\|u\|}_{L^{\frac{2p}{p-2},2}(\mathbb{R}^{n})} \lesssim_{n,p} {\|u\|}_{L^{2}(\mathbb{R}^{n})}^{1-\frac{n}{p}}{\|\nabla u\|}_{L^{2}(\mathbb{R}^{n})}^{\frac{n}{p}} \quad \text{for }n\geq2\text{ and }p\in(n,\infty].
\end{equation}
(In the case $n>2$, take (\ref{pre-sobolev-inequality}) with $\alpha=\frac{2(n-1)}{n-2}$, and apply property (vii) from our discussion of Lorentz spaces; in the case $n=2$, use (\ref{pre-sobolev-inequality}) to prove ${\|u\|}_{L^{2k}(\mathbb{R}^{2})}^{k}\leq k!{\|u\|}_{L^{2}(\mathbb{R}^{2})}{\|\nabla u\|}_{L^{2}(\mathbb{R}^{2})}^{k-1}$ by induction on $k\in\mathbb{N}$, then apply property (vii) of Lorentz spaces).

For $n\geq2$ and $T\in(0,\infty]$, the \textbf{\textit{energy class}} $\mathcal{H}_{T}$ consists of those (equivalence classes of) weakly* measurable functions $u:(0,T)\rightarrow H^{1}(\mathbb{R}^{n})\subseteq L_{\mathrm{loc}}^{1}(\mathbb{R}^{n})$ satisfying $u\in\mathcal{L}_{2}^{\infty;*}(T_{-})$ and $\nabla u\in\mathcal{L}_{2}^{2;*}(T_{-})$, where we observe that weak* measurability of $u$ implies weak* measurability of $\nabla u$. By virtue of the identification $\mathcal{L}_{2}^{2;*}(T')\cong{\left(L^{2}((0,T')\times\mathbb{R}^{n})\right)}^{*}\cong L^{2}((0,T')\times\mathbb{R}^{n})$, we may identify $u$ and $\nabla u$ with measurable functions $u\in\mathcal{L}_{2}^{\infty}(T_{-})$ and $\nabla u\in\mathcal{L}_{2}^{2}(T_{-})$. The Sobolev inequality implies that $\mathcal{H}_{T}\subseteq\cap_{p\in(n,\infty]}\mathcal{L}_{\frac{2p}{p-2},2}^{2p/n}(T_{-})$, so we can apply our results from the previous section on the various formulations of the Navier-Stokes equations.

The following result concerns continuity at the initial time for soutions in the energy class.
\begin{theorem}
    If $T\in(0,\infty]$, $f\in L^{2}(\mathbb{R}^{n})$ satisfies $\langle f_{i},\nabla_{i}\phi\rangle$ for all $\phi\in C_{c}^{\infty}(\mathbb{R}^{n})$, and $u\in\mathcal{H}_{T}$ satisfies the mild formulation of the Navier-Stokes equations, then $u$ satisfies the continuity condition
    \begin{equation}\label{continuity-condition}
        \left\{\begin{array}{l}\text{there exists a subset }\Omega\subseteq(0,T)\text{ of total measure such that} \\ \langle u(t),\phi\rangle\rightarrow\langle f,\phi\rangle\text{ for all }\phi\in L^{2}(\mathbb{R}^{n})\text{ as }t\rightarrow0\text{ along }\Omega. \end{array}\right.
    \end{equation}
\end{theorem}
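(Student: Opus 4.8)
The plan is to use the mild formula to reduce the claim to two pieces---the heat flow of $f$ and the Duhamel (nonlinear) term---and to exploit the uniform $L^2$ bound on $u$ to pass from test functions in $C_c^\infty(\mathbb{R}^n)$ to all of $L^2(\mathbb{R}^n)$. Since $u\in\mathcal{L}_2^{\infty;*}(T_-)$ there is a constant $M$ with $\|u(t)\|_{L^2(\mathbb{R}^n)}\le M$ for almost every $t$ near $0$; together with $\|f\|_{L^2(\mathbb{R}^n)}<\infty$ and the density of $C_c^\infty(\mathbb{R}^n)$ in $L^2(\mathbb{R}^n)$, the inequality $|\langle u(t)-f,\phi\rangle|\le|\langle u(t)-f,\phi_0\rangle|+(M+\|f\|_{L^2(\mathbb{R}^n)})\|\phi-\phi_0\|_{L^2(\mathbb{R}^n)}$ (for $\phi_0\in C_c^\infty$ close to $\phi$) shows that it suffices to verify $\langle u(t),\phi\rangle\to\langle f,\phi\rangle$ for $\phi\in C_c^\infty(\mathbb{R}^n)$ alone. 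Writing $v=v^0+v^1$ with $v^0(t)=e^{t\Delta}f$ and $v^1_i(t)=-\int_0^t\nabla_k e^{(t-s)\Delta}\mathbb{P}_{ij}F_{jk}(s)\,\mathrm{d}s$ (where $F_{jk}=u_ju_k$), and letting $\Omega\subseteq(0,T)$ be the full-measure set on which $u(t)=v(t)$ in $L_{\mathrm{loc}}^1(\mathbb{R}^n)$ and $v^1(t)$ is defined, I would treat the two pieces separately and take the limit along $\Omega$.

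The linear piece is immediate: for $\phi\in C_c^\infty(\mathbb{R}^n)$ we have $\langle v^0(t),\phi\rangle=\langle f,e^{t\Delta}\phi\rangle$, and since $\phi\in C_{b,u}^0(\mathbb{R}^n)$ the approximation-of-identity property (viii) gives $e^{t\Delta}\phi\to\phi$ uniformly, hence in $L^2(\mathbb{R}^n)$, as $t\to0$. Thus $\langle v^0(t),\phi\rangle\to\langle f,\phi\rangle$ for every $t\to 0$, and the entire content of the theorem is to show $\langle v^1(t),\phi\rangle\to0$ as $t\to0$ along $\Omega$.

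For the nonlinear piece, fix $p\in(n,\infty)$, so that the Sobolev embedding gives $u\in\mathcal{L}_{\frac{2p}{p-2},2}^{2p/n}(T_-)$. By H\"{o}lder in Lorentz spaces (property (v)) the product satisfies $\|F(s)\|_{L^{\frac{p}{p-2},1}(\mathbb{R}^n)}^*\lesssim_{p}\|u(s)\|_{L^{\frac{2p}{p-2},2}(\mathbb{R}^n)}^2$, and since $\tfrac{p}{p-2}\in(1,\infty)$ the Riesz bound of Lemma \ref{riesz-lemma} yields the same estimate for $G:=\mathbb{P}F$. Moving the derivative and the semigroup onto the (smooth, compactly supported) test function exactly as in the proof of Theorem \ref{fjrgen}, for almost every $t\in\Omega$ one has $\langle v^1_i(t),\phi_i\rangle=\int_0^t\langle G_{ik}(s),e^{(t-s)\Delta}\nabla_k\phi_i\rangle\,\mathrm{d}s$, so pairing $L^{\frac{p}{p-2},1}$ against its dual $L^{p/2,\infty}$ (property (iii)) and using the heat bound $\|e^{(t-s)\Delta}\nabla_k\phi\|_{L^{p/2,\infty}(\mathbb{R}^n)}\lesssim\|\nabla_k\phi\|_{L^{p/2,\infty}(\mathbb{R}^n)}$ gives
\[
|\langle v^1(t),\phi\rangle|\;\lesssim_{p}\;\|\nabla\phi\|_{L^{p/2,\infty}(\mathbb{R}^n)}\int_0^t\|u(s)\|_{L^{\frac{2p}{p-2},2}(\mathbb{R}^n)}^2\,\mathrm{d}s.
\]
Because $u\in\mathcal{L}_{\frac{2p}{p-2},2}^{2p/n}(T_-)$, the integrand lies in $L^{p/n}((0,T'))$ with $p/n>1$, hence in $L^1$ near $0$, so by absolute continuity of the integral the right-hand side tends to $0$ as $t\to0$. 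Combining this with the linear piece, then removing the restriction to $C_c^\infty(\mathbb{R}^n)$ as in the first paragraph (intersecting $\Omega$ with the full-measure set on which $\|u(t)\|_{L^2(\mathbb{R}^n)}\le M$), completes the proof.

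I expect the only real obstacle to be the nonlinear term, namely arranging the H\"{o}lder pairing in Lorentz spaces so that the remaining time integral is integrable near $t=0$. This is precisely where the hypothesis $p>n$ enters: it forces $2p/n>2$, so that the square of the $\mathcal{L}_{\frac{2p}{p-2},2}^{2p/n}$-norm is locally integrable in time and its integral over $(0,t)$ vanishes as $t\to0$. The linear part and the reduction to a dense class of test functions are entirely routine.
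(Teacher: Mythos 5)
Your proposal is correct, and its skeleton (mild formula, splitting into the heat-flow and Duhamel pieces, conclusion by density of $C_{c}^{\infty}(\mathbb{R}^{n})$ in $L^{2}(\mathbb{R}^{n})$ together with the essential boundedness of ${\|u(t)\|}_{L^{2}(\mathbb{R}^{n})}$ near $t=0$) is the same as the paper's; the genuine difference is in how the Duhamel term is estimated. The paper first uses the weak divergence-free condition to transfer the derivative onto $u$, rewriting $\nabla_{k}e^{(t-s)\Delta}\mathbb{P}_{ij}[u_{j}u_{k}](s)$ as $e^{(t-s)\Delta}\mathbb{P}_{ij}[u_{k}\nabla_{k}u_{j}](s)$ (a step which itself needs an approximation argument), and then bounds the Duhamel term in the $L^{\frac{p}{p-1},1}(\mathbb{R}^{n})$ norm by $\int_{0}^{t}{\|u(s)\|}_{L^{\frac{2p}{p-2},2}(\mathbb{R}^{n})}{\|\nabla u(s)\|}_{L^{2}(\mathbb{R}^{n})}\,\mathrm{d}s$, which vanishes as $t\rightarrow0$ because the integrand is a product of an $L^{2p/n}$ and an $L^{2}$ function of time with $\frac{n}{2p}+\frac{1}{2}<1$. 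You instead keep $F=u\otimes u$, move both $\nabla_{k}$ and the semigroup onto the test function by duality, and use only the Sobolev bound on $u$ --- never $\nabla u$ and never the divergence-free structure of the nonlinearity --- paying with the pairing of $L^{\frac{p}{p-2},1}(\mathbb{R}^{n})$ against $L^{p/2,\infty}(\mathbb{R}^{n})$ instead of $L^{\frac{p}{p-1},1}(\mathbb{R}^{n})$ against $L^{p,\infty}(\mathbb{R}^{n})$. Your route is slightly more economical for this step and sidesteps the derivative-transfer justification; the paper's route yields the marginally stronger conclusion that the Duhamel piece tends to zero in a fixed Lorentz norm rather than merely against test functions, though for the theorem at hand both are equally sufficient, since one pairs against test functions and upgrades by density in either case.

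Two small repairs. First, the implication ``$e^{t\Delta}\phi\rightarrow\phi$ uniformly, hence in $L^{2}$'' is not valid on the infinite-measure space $\mathbb{R}^{n}$; instead apply the approximation-of-identity part of property (viii) directly in $L^{2}(\mathbb{R}^{n})$ (to $\phi$, or, as the paper does, to $f$), using $\Phi(t,x)=t^{-n/2}\Phi(1,x/\sqrt{t})$ and $\int_{\mathbb{R}^{n}}\Phi(t,x)\,\mathrm{d}x=1$. Second, the duality pairing you attribute to property (iii) is really property (v) combined with H\"{o}lder's inequality; and when justifying the Fubini and integration-by-parts manipulations behind $\langle v_{i}^{1}(t),\phi_{i}\rangle=\int_{0}^{t}\langle G_{ik}(s),e^{(t-s)\Delta}\nabla_{k}\phi_{i}\rangle\,\mathrm{d}s$, note that the crude absolute bound produces $\int_{0}^{t}{(t-s)}^{-1/2}{\|G(s)\|}_{L^{\frac{p}{p-2},1}(\mathbb{R}^{n})}\,\mathrm{d}s$, which is finite only for almost every $t$ --- harmless here, since you restrict to a full-measure set $\Omega$ anyway, and this is exactly how the analogous integrals are handled before the statement of Theorem \ref{fjrgen}.
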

\begin{proof}
    The mild solution $u$ satisfies
    \begin{equation}\label{energy-mild-solution}
        u_{i}(t,x) = e^{t\Delta}f_{i}(x) - \int_{0}^{t}e^{(t-s)\Delta}\mathbb{P}_{ij}\left[u_{k}(s)\nabla_{k}u_{j}(s)\right](x)\,\mathrm{d}s \quad \text{for a.e.\ }(t,x)\in(0,T)\times\mathbb{R}^{n},
    \end{equation}
    where the transfer of the derivative onto $u$ is justified using integration by parts, approximating $u(s)$ by smooth compactly supported functions, and using the fact that $\langle u_{i}(s),\nabla_{i}\phi\rangle=0$ for all $\phi\in C_{c}^{\infty}(\mathbb{R}^{n})$ for almost every $s\in(0,T)$. Now ${\|e^{t\Delta}f-f\|}_{L^{2}(\mathbb{R}^{n})}\overset{t\rightarrow0}{\rightarrow}0$ by approximation of identity, while by Minkowski's inequality (property (iv) of Lorentz spaces), properties of the heat kernel and Riesz transform, and the Sobolev inequality, for $p\in(n,\infty)$ we have
    \begin{equation}
        {\left\|\int_{0}^{t}e^{(t-s)\Delta}\mathbb{P}_{ij}\left[u_{k}(s)\nabla_{k}u_{j}(s)\right](\cdot)\,\mathrm{d}s\right\|}_{L^{\frac{p}{p-1},1}(\mathbb{R}^{n})} \lesssim_{n,p} \int_{0}^{t}{\|u(s)\|}_{L^{\frac{2p}{p-2},2}(\mathbb{R}^{n})}{\|\nabla u(s)\|}_{L^{2}(\mathbb{R}^{n})}\,\mathrm{d}s \overset{t\rightarrow0}{\rightarrow}0.
    \end{equation}
    Therefore $\lim_{t\in\Omega,t\rightarrow0}\langle u(t),\phi\rangle=\langle f,\phi\rangle$ for all $\phi\in C_{c}^{\infty}(\mathbb{R}^{n})$, where $t\in\Omega$ iff (\ref{energy-mild-solution}) holds for almost every $x\in\mathbb{R}^{n}$. We conclude by density of $C_{c}^{\infty}(\mathbb{R}^{n})$ in $L^{2}(\mathbb{R}^{n})$, and by the fact that ${\|u(t)\|}_{L^{2}(\mathbb{R}^{n})}$ is essentially bounded near $t=0$.
\end{proof}
\subsection{Energy estimates}
For $p\in(n,\infty]$ and $T\in(0,\infty]$ we define $\mathcal{V}_{T}^{p}:=\mathcal{H}_{T}\cap\mathcal{L}_{p,\infty}^{2p/(p-n)}(T_{-})$. In the particular case $n=2$, we have $\mathcal{H}_{T}\subseteq\cap_{p\in(2,\infty)}\mathcal{V}_{T}^{p}$ by the Sobolev inequality.
\begin{theorem}\label{energy-theorem}
    Let $T\in(0,\infty]$, and assume that $f\in L^{2}(\mathbb{R}^{n})$ satisfies $\langle f_{i},\nabla_{i}\phi\rangle=0$ for all $\phi\in C_{c}^{\infty}(\mathbb{R}^{n})$. Assume that $p\in(n,\infty]$, and that $u\in\mathcal{H}_{T}\cap\mathcal{V}_{T}^{p}$ and $v\in\mathcal{H}_{T}$ satisfy the projected formulation of the Navier-Stokes equations with intial data $f$, with $u$ and $v$ both satisfying the continuity condition \eqref{continuity-condition}. Then for almost every $t\in(0,T)$ we have
    \begin{equation}\label{energy}
    \begin{aligned}
        \langle u_{i}(t),v_{i}(t)\rangle &= {\|f\|}_{L^{2}(\mathbb{R}^{n})}^{2} - 2\int_{0}^{t}\langle\nabla_{j}u_{i}(s),\nabla_{j}v_{i}(s)\rangle\,\mathrm{d}s \\
        &\qquad -\int_{0}^{t}\langle{[(u\cdot\nabla)u]}_{i}(s),v_{i}(s)\rangle\,\mathrm{d}s -\int_{0}^{t}\langle u_{i}(s),{[(v\cdot\nabla)v]}_{i}(s)\rangle\,\mathrm{d}s.
    \end{aligned}
    \end{equation}
    In particular, $u$ satisfies the \textbf{\textit{energy equality}}
    \begin{equation}\label{energy-equality}
        {\|f\|}_{L^{2}(\mathbb{R}^{n})}^{2} = {\|u(t)\|}_{L^{2}(\mathbb{R}^{n})}^{2} + 2\int_{0}^{t}{\|\nabla u(s)\|}_{L^{2}(\mathbb{R}^{n})}^{2}\,\mathrm{d}s \quad \text{for a.e.\ }t\in(0,T).
    \end{equation}
\end{theorem}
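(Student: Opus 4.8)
The plan is to carry out the classical Serrin computation --- use $v$ to test the equation for $u$, use $u$ to test the equation for $v$, add, and integrate in time --- the whole difficulty being to justify each step given that $u,v$ have only the regularity of $\mathcal{H}_T$ (together, for $u$, with membership in $\mathcal{L}_{p,\infty}^{2p/(p-n)}(T_-)$). Since $u$ and $v$ satisfy the mild formulation, they satisfy, as distributions in time tested against divergence-free spatial fields, $\partial_t u=\Delta u-\mathbb{P}\,\mathrm{div}(u\otimes u)$ and likewise for $v$. First I would mollify in time using Lemma~\ref{approximation-lemma}: the mollified fields $u^{\epsilon},v^{\epsilon}$ then satisfy these relations pointwise in $t$ on any subinterval $(t_1,t_2)\subset\subset(0,T)$, so that $t\mapsto\langle u^{\epsilon}(t),v^{\epsilon}(t)\rangle$ is differentiable with derivative $-2\langle\nabla u^{\epsilon},\nabla v^{\epsilon}\rangle-\langle[(u\cdot\nabla)u]^{\epsilon},v^{\epsilon}\rangle-\langle u^{\epsilon},[(v\cdot\nabla)v]^{\epsilon}\rangle$, the nonlinear terms arising after moving $\mathbb{P}$ onto the divergence-free factor and integrating by parts using $\nabla\cdot u=\nabla\cdot v=0$. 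Integrating over $(t_1,t_2)$ gives a mollified form of \eqref{energy} with endpoint terms $\langle u^{\epsilon}(t_i),v^{\epsilon}(t_i)\rangle$.

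Next I would let $\epsilon\to0$. At almost every $t_1,t_2$ (common Lebesgue points) the endpoints converge to $\langle u(t_i),v(t_i)\rangle$ since mollification converges in $L^2$ there and $u,v\in\mathcal{L}_2^{\infty}$. For the three integrands the point is that each pairing sits precisely at the Serrin scaling: writing $\tfrac1r=\tfrac1p+\tfrac12$, the Sobolev embedding $\mathcal{H}_T\subseteq\mathcal{L}_{2p/(p-2),2}^{2p/n}(T_-)$ together with Lorentz--H\"{o}lder (property (v)) yields $(u\cdot\nabla)u\in L^{r,2}_x$ with time exponent $\tfrac{2p}{2p-n}$, paired dually with $v\in L^{r',2}_x$ with time exponent $\tfrac{2p}{n}$, while $(v\cdot\nabla)v\in L^{p',1}_x$ with time exponent $\tfrac{2p}{n+p}$, paired dually with $u\in L^{p,\infty}_x$ with time exponent $\tfrac{2p}{p-n}$; in each case the spatial exponents sum to $1$ and the temporal exponents are conjugate, so part (ii) of Lemma~\ref{approximation-lemma} (and its proof) gives convergence of the mollified pairings in $L^1$ on interior subintervals, the gradient term being the case $p=q=\alpha=2$. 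It is precisely $u\in\mathcal{V}_T^p$ that controls both nonlinear terms, and the \emph{weak} space $L^{p,\infty}$ suffices because $u$ is only ever paired against the $L^{p',1}$ (resp.\ $L^{r',2}$) factor supplied by the energy regularity of $v$. This produces \eqref{energy} for almost every $0<t_1<t_2<T$ with $\|f\|_{L^2}^2$ replaced by $\langle u(t_1),v(t_1)\rangle$ and the integrals taken over $(t_1,t_2)$.

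The main obstacle is the initial time: one must send $t_1\to0$ and identify $\langle u(t_1),v(t_1)\rangle\to\|f\|_{L^2}^2$. Because all three time-integrands are globally $L^1$ on $(0,t_2)$ by the estimates above, the limit exists; the difficulty is that \eqref{continuity-condition} supplies only weak-$L^2$ convergence $u(t_1),v(t_1)\rightharpoonup f$, which cannot be passed through the product. I would resolve this by upgrading the Serrin-class solution $u$ to \emph{strong} $L^2$-continuity at $t=0$: in the mild representation $u(t)=e^{t\Delta}f-\int_0^t\nabla e^{(t-s)\Delta}\mathbb{P}(u\otimes u)(s)\,\mathrm{d}s$ one has $e^{t\Delta}f\to f$ in $L^2$ by approximation of identity, while the Duhamel term is estimated in $L^2$ by $\int_0^t(t-s)^{-\gamma}\|(u\otimes u)(s)\|_{L^{a,\infty}}\,\mathrm{d}s$ with $\gamma=\tfrac12+\tfrac n2\bigl(\tfrac1a-\tfrac12\bigr)$, where $a\in\bigl(\tfrac{2p}{p+2},\min(2,\tfrac p2)\bigr)$ and $u\otimes u\in L^{a,\infty}$ is obtained by interpolating $u$ between $L^2$ and $L^{p,\infty}$ (property (vii)); a direct computation shows that for such $a$ one has $\gamma<1-\tfrac1\tau$, where $\tau$ is the time-integrability exponent of $\|(u\otimes u)\|_{L^{a,\infty}}$, so that the fractional integral tends to $0$ as $t\to0$, giving $u(t_1)\to f$ in $L^2$ (the borderline case $n=2$, where $\mathcal{H}_T\subseteq\mathcal{V}_T^p$ automatically, being classical). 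Writing $\langle u(t_1),v(t_1)\rangle=\langle u(t_1)-f,v(t_1)\rangle+\langle f,v(t_1)\rangle$, the first term vanishes (strong times bounded) and the second tends to $\|f\|_{L^2}^2$ by weak continuity of $v$, which establishes \eqref{energy}.

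Finally, to obtain \eqref{energy-equality} I would specialise \eqref{energy} to $v=u$ (legitimate as $u\in\mathcal{H}_T$), whereupon the two nonlinear integrals coincide and each equals $\int_0^t\langle u_k\nabla_k u_i,u_i\rangle\,\mathrm{d}s=\tfrac12\int_0^t\langle u_k,\nabla_k|u|^2\rangle\,\mathrm{d}s=0$ by the divergence-free condition, justified by approximating $u(s)\in H^1$ by smooth compactly supported fields. This leaves $\|u(t)\|_{L^2}^2=\|f\|_{L^2}^2-2\int_0^t\|\nabla u(s)\|_{L^2}^2\,\mathrm{d}s$ for almost every $t\in(0,T)$, as required.
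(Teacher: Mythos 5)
Your core computation is the same as the paper's: mollify in time with $\eta_{\epsilon}$, test the projected formulation against $\theta(s)=\eta_{\epsilon}(t-s)$ times a spatial field, extend the admissible fields to $H^{1}$ (resp.\ $H^{1}\cap L^{p,\infty}$), and pass to the limit in the three bulk integrals using the dual Serrin-scaled pairings and Lemma \ref{approximation-lemma}; your exponent bookkeeping agrees with \eqref{regularity}. Where you genuinely depart from the paper is at the initial time, and this is where the paper is doing something you have missed: it never needs strong $L^{2}$ continuity. Because the paper mollifies \emph{across} $t=0$, the test function $\theta(s)=\eta_{\epsilon}(t-s)$ does not vanish at $s=0$, so the data term of the projected formulation survives as $\eta_{\epsilon}(t)\langle f_{i},\psi_{i}\rangle$ in \eqref{strong-derivative}--\eqref{weak-derivative}; integrating from $-\epsilon$ then yields \eqref{approx-energy}, whose initial term $\int_{-\epsilon}^{t}\eta_{\epsilon}(s)\langle f_{i},u_{i}^{\epsilon}(s)+v_{i}^{\epsilon}(s)\rangle\,\mathrm{d}s$ converges to ${\|f\|}_{L^{2}(\mathbb{R}^{n})}^{2}$ using \emph{only} the weak continuity \eqref{continuity-condition} applied to the single fixed test function $\phi=f$. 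Weak convergence suffices precisely because $f$ is fixed, so the product obstruction you identify never arises. Your route instead works on interior intervals and buys the endpoint identification $\langle u(t_{1}),v(t_{1})\rangle\rightarrow{\|f\|}_{L^{2}(\mathbb{R}^{n})}^{2}$ by upgrading $u$ to strong $L^{2}$ continuity at $t=0$ through the mild formulation (legitimate, via Theorem \ref{fjrgen}); granting that upgrade, your splitting $\langle u(t_{1})-f,v(t_{1})\rangle+\langle f,v(t_{1})\rangle$ is correct, as is your derivation of \eqref{energy-equality} by taking $v=u$.

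The gap is that the upgrade fails when $n=2$, and the theorem is stated for all $n\geq2$. With your choices $\frac{1}{a}=(1-\theta)+\frac{2\theta}{p}$, $\gamma=\frac{1}{2}+\frac{n}{2}\left(\frac{1}{a}-\frac{1}{2}\right)$ and $\frac{1}{\tau}=\frac{\theta(p-n)}{p}$, one computes
\begin{equation*}
    \left(1-\tfrac{1}{\tau}\right)-\gamma = \frac{\theta(n-2)}{2}-\frac{n-2}{4},
\end{equation*}
so your requirement $\gamma<1-\frac{1}{\tau}$ amounts to $\theta>\frac{1}{2}$ (equivalently $a>\frac{2p}{p+2}$) when $n\geq3$ --- consistent with your stated range --- but degenerates to the false inequality $0<0$ when $n=2$: every admissible interpolation makes the Duhamel estimate exactly critical (the H\"{o}lder-conjugate power of $(t-s)^{-\gamma}$ is exactly non-integrable), which is the scaling-criticality of the two-dimensional energy space and cannot be repaired by a different choice of exponents. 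Deferring to ``classical'' results at this point is circular: for $n=2$ the hypothesis $u\in\mathcal{V}_{T}^{p}$ is vacuous (as $\mathcal{H}_{T}\subseteq\mathcal{V}_{T}^{p}$), so the $n=2$ case of the theorem \emph{is} the classical two-dimensional energy equality for arbitrary distributional solutions in $\mathcal{H}_{T}$ satisfying only \eqref{continuity-condition} --- no energy inequality is assumed here --- and the classical proofs obtain strong continuity at $t=0$ as a consequence of that equality, not as an input to it. As written, your argument therefore proves the theorem only for $n\geq3$; to cover $n=2$ you should replace your initial-time step by the device above (mollification across $t=0$ plus weak continuity against the fixed $f$), which costs nothing elsewhere in your proof.
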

\begin{proof}
    By the Sobolev inequality, and by the assmuptions $u\in\mathcal{V}_{T}^{p}$ and $v\in\mathcal{H}_{T}$, we have
    \begin{equation}\label{regularity}
        \begin{array}{ccc} u\in\mathcal{L}_{2}^{\infty}(T_{-})\cap\mathcal{L}_{p,\infty}^{2p/(p-n)}(T_{-}), & \nabla u\in \mathcal{L}_{2}^{2}(T_{-}), & (u\cdot\nabla)u\in \mathcal{L}_{\frac{2p}{p+2},2}^{2p/(2p-n)}(T_{-}), \\ v\in \mathcal{L}_{2}^{\infty}(T_{-})\cap\mathcal{L}_{\frac{2p}{p-2},2}^{2p/n}(T_{-}), & \nabla v\in \mathcal{L}_{2}^{2}(T_{-}), & (v\cdot\nabla)v\in \mathcal{L}_{\frac{p}{p-1},1}^{2p/(p+n)}(T_{-}), \end{array}
    \end{equation}
    so (\ref{energy}) makes sense. To prove (\ref{energy}), let $\eta\in C_{c}^{\infty}((-1,1))$ with $\int_{-1}^{1}\eta(t)\,\mathrm{d}t=1$, let $\eta_{\epsilon}(t)=\epsilon^{-1}\eta(t/\epsilon)$, and define the regularised functions
    \begin{equation}
        u^{\epsilon}(t,x) := \int_{0}^{T}\eta_{\epsilon}(t-s)u(s,x)\,\mathrm{d}s, \quad v^{\epsilon}(t,x) := \int_{0}^{T}\eta_{\epsilon}(t-s)v(s,x)\,\mathrm{d}s
    \end{equation}
    for all $t\in(-\infty,T-\epsilon)$ and almost every $x\in\mathbb{R}^{n}$  (by Minkowski's inequality, the functions $u^{\epsilon}$ and $v^{\epsilon}$ are well-defined, with $u^{\epsilon}(t)\in H^{1}(\mathbb{R}^{n})\cap L^{p,\infty}(\mathbb{R}^{n})$ and $v^{\epsilon}(t)\in H^{1}(\mathbb{R}^{n})$). Differentiating under the integral, for all $t\in(-\infty,T-\epsilon)$ we have
    \begin{equation}\label{mixed-derivative}
        \frac{\partial}{\partial t}\langle u_{i}^{\epsilon}(t),v_{i}^{\epsilon}(t)\rangle = \int_{0}^{T}\eta_{\epsilon}'(t-s)\langle u_{i}(s),v_{i}^{\epsilon}(t)\rangle\,\mathrm{d}s + \int_{0}^{T}\eta_{\epsilon}'(t-s)\langle u_{i}^{\epsilon}(t),v_{i}(s)\rangle\,\mathrm{d}s.
    \end{equation}
    Since $u$ satisfies the projected formulation of the Navier-Stokes equations, for $\theta\in C_{c}^{\infty}([0,T))$ and $\psi\in C_{c}^{\infty}(\mathbb{R}^{n})$ we have
    \begin{equation}
        \theta(0)\langle f_{i},\psi_{i}\rangle + \int_{0}^{T}\left(\langle u_{i}(s),\theta'(s)\psi_{i}\rangle-\langle\nabla_{j}u_{i}(s),\theta(s)\nabla_{j}\psi_{i}\rangle - \langle{[(u\cdot\nabla)u]}_{i}(s),\theta(s)\mathbb{P}_{ij}\psi_{i}\rangle\right)\,\mathrm{d}s=0.
    \end{equation}
    For $t\in(-\infty,T-\epsilon)$ and $\psi\in C_{c}^{\infty}(\mathbb{R}^{n})$, if we take $\theta(s)=\eta_{\epsilon}(t-s)$ in this last equality then we obtain
    \begin{equation}\label{strong-derivative}
        \int_{0}^{T}\eta_{\epsilon}'(t-s)\langle u_{i}(s),\psi_{i}\rangle\,\mathrm{d}s=\eta_{\epsilon}(t)\langle f_{i},\psi_{i}\rangle - \langle\nabla_{j}u_{i}^{\epsilon}(t),\nabla_{j}\psi_{i}\rangle - \langle{[(u\cdot\nabla)u]}_{i}^{\epsilon}(t),\mathbb{P}_{ij}\psi_{i}\rangle.
    \end{equation}
    Analogously, for $t\in(-\infty,T-\epsilon)$ and $\psi\in C_{c}^{\infty}(\mathbb{R}^{n})$ we have
    \begin{equation}\label{weak-derivative}
        \int_{0}^{T}\eta_{\epsilon}'(t-s)\langle v_{i}(s),\psi_{i}\rangle\,\mathrm{d}s=\eta_{\epsilon}(t)\langle f_{i},\psi_{i}\rangle - \langle\nabla_{j}v_{i}^{\epsilon}(t),\nabla_{j}\psi_{i}\rangle - \langle{[(v\cdot\nabla)v]}_{i}^{\epsilon}(t),\mathbb{P}_{ij}\psi_{i}\rangle.
    \end{equation}
    Noting that ${[(u\cdot\nabla)u]}^{\epsilon}(t)\in L^{\frac{2p}{p+2},2}(\mathbb{R}^{n})$ and ${[(v\cdot\nabla)v]}^{\epsilon}(t)\in L^{\frac{p}{p-1},1}(\mathbb{R}^{n})$ (with a minor modification in the case $p=\infty$ to deal with the Leray projection), we see that (\ref{strong-derivative}) extends to all $\psi\in H^{1}(\mathbb{R}^{n})\subseteq L^{\frac{2p}{p-2},2}(\mathbb{R}^{n})$, while (\ref{weak-derivative}) extends to all $\psi\in H^{1}(\mathbb{R}^{n})\cap L^{p,\infty}(\mathbb{R}^{n})$ (replace $\psi$ by $(\phi_{\delta}*\psi)\rho_{R}$, where $\phi_{\delta}$ is an approximate identity and $\rho_{R}$ is a smooth cutoff function, and take the limit $R\rightarrow\infty$ then $\delta\rightarrow0$). By (\ref{mixed-derivative}), we deduce for $t\in(-\infty,T-\epsilon)$ that
    \begin{equation}\label{approx-energy}
    \begin{aligned}
        \langle u_{i}^{\epsilon}(t),v_{i}^{\epsilon}(t)\rangle &= \int_{-\epsilon}^{t}\eta_{\epsilon}(s)\langle f_{i},u_{i}^{\epsilon}(s)+v_{i}^{\epsilon}(s)\rangle\,\mathrm{d}s - 2\int_{-\epsilon}^{t}\langle\nabla_{j}u_{i}^{\epsilon}(s),\nabla_{j}v_{i}^{\epsilon}(s)\rangle\,\mathrm{d}s \\
        &\qquad - \int_{-\epsilon}^{t}\langle{[(u\cdot\nabla)u]}_{i}^{\epsilon}(s),v_{i}^{\epsilon}(s)\rangle\,\mathrm{d}s - \int_{-\epsilon}^{t}\langle u_{i}^{\epsilon}(s),{[(v\cdot\nabla)v]}_{i}^{\epsilon}(s)\rangle\,\mathrm{d}s.
    \end{aligned}
    \end{equation}
    By \eqref{continuity-condition} and dominated convergence, for $t\in(\epsilon,T-\epsilon)$ we have
    \begin{equation}
        \int_{-\epsilon}^{t}\eta_{\epsilon}(s)\langle f_{i},u_{i}^{\epsilon}(s)\rangle\,\mathrm{d}s = \int_{-1}^{1}\int_{-1}^{\sigma}\eta(\sigma)\eta(\rho)\langle f_{i},u_{i}(\epsilon(\sigma-\rho))\rangle\,\mathrm{d}\rho\,\mathrm{d}\sigma\overset{\epsilon\rightarrow0}{\rightarrow}\frac{1}{2}{\|f\|}_{L^{2}(\mathbb{R}^{n})}^{2},
    \end{equation}
    so the first integral on the right hand side of (\ref{approx-energy}) converges to ${\|f\|}_{L^{2}(\mathbb{R}^{n})}^{2}$. By Lemma \ref{approximation-lemma} (defining $u(t)=v(t)=0$ for $t<0$, and noting the regularity described in (\ref{regularity})), the last three integrals on the right hand side of (\ref{approx-energy}) converge to the last three integrals on the right hand side of (\ref{energy}), while $\langle u^{\epsilon_{m}}(t),v^{\epsilon_{m}}(t)\rangle\rightarrow\langle u(t),v(t)\rangle$ for almost every $t\in(0,T)$ along some subsequence $\epsilon_{m}\rightarrow0$. We conclude that (\ref{energy}) holds for almost every $t\in(0,T)$.
\end{proof}
\subsection{Weak-strong uniqueness}
The estimates of Theorem \ref{energy-theorem} allow us to prove the following weak-strong uniqueness result.
\begin{theorem}
    Let $T\in(0,\infty]$, and assume that $f\in L^{2}(\mathbb{R}^{n})$ satisfies $\langle f_{i},\nabla_{i}\phi\rangle=0$ for all $\phi\in C_{c}^{\infty}(\mathbb{R}^{n})$. Assume that $p\in(n,\infty]$, and that $u\in\mathcal{H}_{T}\cap\mathcal{V}_{T}^{p}$ and $v\in\mathcal{H}_{T}$ satisfy the projected formulation of the Navier-Stokes equations with intial data $f$, with $u$ and $v$ both satisfying the continuity condition \eqref{continuity-condition}, and $v$ satisfying the \textbf{\textit{energy inequality}}
    \begin{equation}\label{energy-inequality}
        {\|f\|}_{L^{2}(\mathbb{R}^{n})}^{2}\geq{\|v(t)\|}_{L^{2}(\mathbb{R}^{n})}^{2} + 2\int_{0}^{t}{\|\nabla v(s)\|}_{L^{2}(\mathbb{R}^{n})}^{2}\,\mathrm{d}s \quad \text{for a.e.\ }t\in(0,T).
    \end{equation}
    Then $u(t,x)=v(t,x)$ for almost every $(t,x)\in(0,T)\times\mathbb{R}^{n}$.
\end{theorem}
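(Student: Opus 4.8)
The plan is to run Serrin's energy argument on the difference $w:=u-v$, combining the cross-term identity \eqref{energy} of Theorem \ref{energy-theorem} with the energy equality \eqref{energy-equality} for $u$ and the energy inequality \eqref{energy-inequality} for $v$. First I would observe that $u,v\in\mathcal{H}_T$ are both weakly divergence-free, so $w\in\mathcal{H}_T$ with $\nabla\cdot w=0$. Expanding $\|w(t)\|_{L^2(\mathbb{R}^n)}^2=\|u(t)\|_{L^2(\mathbb{R}^n)}^2-2\langle u_i(t),v_i(t)\rangle+\|v(t)\|_{L^2(\mathbb{R}^n)}^2$ and substituting \eqref{energy-equality}, \eqref{energy-inequality} and \eqref{energy}, the three copies of $\|f\|_{L^2(\mathbb{R}^n)}^2$ cancel and the quadratic dissipation terms combine into $-2\int_0^t\|\nabla w(s)\|_{L^2(\mathbb{R}^n)}^2\,\mathrm{d}s$. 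This holds for almost every $t\in(0,T)$ and, crucially, leaves no initial constant.

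Next I would reorganise the two nonlinear contributions. Integrating by parts using $\nabla\cdot v=0$ gives $\langle u_i,[(v\cdot\nabla)v]_i\rangle=-\langle[(v\cdot\nabla)u]_i,v_i\rangle$; writing $v=u-w$ and using $\nabla\cdot w=0$ (so that $\langle[(w\cdot\nabla)u]_i,u_i\rangle=\tfrac12\langle w_k,\nabla_k|u|^2\rangle=0$), the sum of the nonlinear terms collapses to $-\langle[(w\cdot\nabla)u]_i,w_i\rangle$, which a further integration by parts rewrites as $\langle u_i,[(w\cdot\nabla)w]_i\rangle$. These manipulations are formal at first and would be justified by the density/approximation machinery already in play (Pettis' theorem and Lemma \ref{approximation-lemma}), exactly as in the proof of Theorem \ref{energy-theorem} and using the regularity listed there. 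The outcome is the integral inequality $\|w(t)\|_{L^2(\mathbb{R}^n)}^2\le -2\int_0^t\|\nabla w\|_{L^2(\mathbb{R}^n)}^2\,\mathrm{d}s+2\int_0^t|\langle u_i,[(w\cdot\nabla)w]_i\rangle|\,\mathrm{d}s$ for a.e.\ $t$.

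The main obstacle, and the precise point where the weak-Lebesgue refinement enters, is the estimate of the nonlinear integrand $|\langle u_i,[(w\cdot\nabla)w]_i\rangle|$. Here I would apply H\"older in the Lorentz scale: since $u(s)\in L^{p,\infty}(\mathbb{R}^n)$, $w(s)\in L^{\frac{2p}{p-2},2}(\mathbb{R}^n)$ (by the Sobolev inequality for $\mathcal{H}_T$), and $\nabla w(s)\in L^2(\mathbb{R}^n)=L^{2,2}(\mathbb{R}^n)$, the primary exponents satisfy $\frac1p+\frac{p-2}{2p}+\frac12=1$ while the secondary exponents satisfy $\frac1\infty+\frac12+\frac12=1$, so property (v) and the duality of $L^{p,\infty}(\mathbb{R}^n)$ with $L^{p',1}(\mathbb{R}^n)$ give $|\langle u,(w\cdot\nabla)w\rangle|\lesssim_{n,p}\|u\|_{L^{p,\infty}(\mathbb{R}^n)}^*\|w\|_{L^{\frac{2p}{p-2},2}(\mathbb{R}^n)}\|\nabla w\|_{L^2(\mathbb{R}^n)}$. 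This is exactly where $L^{p,\infty}$ (paired against $L^{p',1}$) replaces the classical pairing of $L^p$ against $L^{p'}$; the fact that the second Lorentz indices close up to exactly $1$ is what lets the argument proceed with no smallness hypothesis.

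Finally I would insert the Sobolev interpolation $\|w\|_{L^{\frac{2p}{p-2},2}(\mathbb{R}^n)}\lesssim_{n,p}\|w\|_{L^2(\mathbb{R}^n)}^{1-\frac np}\|\nabla w\|_{L^2(\mathbb{R}^n)}^{\frac np}$ and then Young's inequality with conjugate exponents $\frac{2p}{p+n}$ and $\frac{2p}{p-n}$ to split off a term $\epsilon\|\nabla w\|_{L^2(\mathbb{R}^n)}^2$ against $C_\epsilon(\|u\|_{L^{p,\infty}(\mathbb{R}^n)}^*)^{\frac{2p}{p-n}}\|w\|_{L^2(\mathbb{R}^n)}^2$, the exponent $2$ on $\|w\|_{L^2(\mathbb{R}^n)}$ arising since $(1-\tfrac np)\cdot\tfrac{2p}{p-n}=2$. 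Choosing $\epsilon$ small enough to absorb the gradient term into the dissipation leaves $\|w(t)\|_{L^2(\mathbb{R}^n)}^2\le C\int_0^t g(s)\|w(s)\|_{L^2(\mathbb{R}^n)}^2\,\mathrm{d}s$ for a.e.\ $t$, where $g(s)=(\|u(s)\|_{L^{p,\infty}(\mathbb{R}^n)}^*)^{\frac{2p}{p-n}}$ is locally integrable on $(0,T)$ precisely because $u\in\mathcal{V}_T^p=\mathcal{H}_T\cap\mathcal{L}_{p,\infty}^{2p/(p-n)}(T_-)$. As this inequality carries no constant term, Gr\"onwall's inequality forces $\|w(t)\|_{L^2(\mathbb{R}^n)}=0$ for a.e.\ $t$, i.e.\ $u=v$ almost everywhere. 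The endpoint $p=\infty$ is covered by the same computation, reading $\tfrac{2p}{p-2}=2$ and $\tfrac np=0$, so that the Sobolev step is trivial and Young's inequality is applied with exponents $(2,2)$.
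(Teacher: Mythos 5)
Your proposal is correct and follows essentially the same route as the paper's proof: expand $\|w(t)\|_{L^2}^2$ for $w=u-v$ using \eqref{energy}, \eqref{energy-equality} and \eqref{energy-inequality}, collapse the nonlinear terms to a single pairing $\langle u,(w\cdot\nabla)w\rangle$ via the divergence-free structure, then close with the Lorentz--H\"older bound $\|u\|_{L^{p,\infty}}\|w\|_{L^{\frac{2p}{p-2},2}}\|\nabla w\|_{L^2}$, the Sobolev interpolation, Young's inequality with exponents $\tfrac{2p}{p+n},\tfrac{2p}{p-n}$, and Gr\"onwall. (Incidentally, your sign $+\langle u,(w\cdot\nabla)w\rangle$ for the collapsed nonlinearity is the correct one -- the paper writes it with a minus sign -- but this is immaterial since the very next step estimates its absolute value.)
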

\begin{proof}
    Let $w=u-v$. By (\ref{energy}), (\ref{energy-equality}) and (\ref{energy-inequality}), for almost every $t\in(0,T)$ we have
    \begin{equation}
    \begin{aligned}
        {\|w(t)\|}_{L^{2}(\mathbb{R}^{n})}^{2} + 2\int_{0}^{t}{\|\nabla w(s)\|}_{L^{2}(\mathbb{R}^{n})}^{2}\,\mathrm{d}s &\leq 2\int_{0}^{t}\langle{[(u\cdot\nabla)u]}_{i}(s),v_{i}(s)\rangle\,\mathrm{d}s + 2\int_{0}^{t}\langle u_{i}(s),{[(v\cdot\nabla)v]}_{i}(s)\rangle\,\mathrm{d}s \\
        &= -2\int_{0}^{t}\langle u_{i}(s),{[(w\cdot\nabla)w]}_{i}(s)\rangle\,\mathrm{d}s \\
        &\lesssim_{n,p} \int_{0}^{t}{\|u(s)\|}_{L^{p,\infty}(\mathbb{R}^{n})}{\|w(s)\|}_{L^{\frac{2p}{p-2},2}(\mathbb{R}^{n})}{\|\nabla w(s)\|}_{L^{2}(\mathbb{R}^{n})}\,\mathrm{d}s \\
        &\lesssim_{n,p} \int_{0}^{t}{\|u(s)\|}_{L^{p,\infty}(\mathbb{R}^{n})}{\|w(s)\|}_{L^{2}(\mathbb{R}^{n})}^{\frac{p-n}{p}}{\|\nabla w(s)\|}_{L^{2}(\mathbb{R}^{n})}^{\frac{p+n}{p}}\,\mathrm{d}s \\
        &\lesssim_{n,p} \frac{1}{\epsilon}\int_{0}^{t}{\|u(s)\|}_{L^{p,\infty}(\mathbb{R}^{n})}^{\frac{2p}{p-n}}{\|w(s)\|}_{L^{2}(\mathbb{R}^{n})}^{2}\,\mathrm{d}s + \epsilon\int_{0}^{t}{\|\nabla w(s)\|}_{L^{2}(\mathbb{R}^{n})}^{2}\,\mathrm{d}s.
    \end{aligned}
    \end{equation}
    Taking $\epsilon>0$ sufficiently small, we deduce that
    \begin{equation}
        {\|w(t)\|}_{L^{2}(\mathbb{R}^{n})}^{2} \lesssim_{n,p} \int_{0}^{t}{\|u(s)\|}_{L^{p,\infty}(\mathbb{R}^{n})}^{\frac{2p}{p-n}}{\|w(s)\|}_{L^{2}(\mathbb{R}^{n})}^{2}\,\mathrm{d}s \quad \text{for a.e.\ }t\in(0,T),
    \end{equation}
    so by Gr\"{o}nwall's inequality we have ${\|w(t)\|}_{L^{2}(\mathbb{R}^{n})}=0$ for almost every $t\in(0,T)$.
\end{proof}

\end{document}